\newtheorem{theorem}{Theorem}[section]
\newtheorem{lemma}[theorem]{Lemma}
\newtheorem{proposition}[theorem]{Proposition}
\newtheorem{corollary}[theorem]{Corollary}
\newtheorem{conj}[theorem]{Conjecture}
\theoremstyle{definition}
\newtheorem{definition}[theorem]{Definition}
\newtheorem{example}[theorem]{Example}
\theoremstyle{remark}
\numberwithin{equation}{section}
\newcommand{\R}{{\mathbb R}}
\newcommand{\Z}{{\mathbb Z}}
\newcommand{\N}{{\mathbb N}}
\newcommand{\E}{{\mathbb E}}
\newcommand{\e}{\varepsilon}
\begin{document}

\title  {Hausdorff and Fourier dimension of graph of continuous additive processes}

\author{Dexter Dysthe}
\address{Department of Mathematics, San Francisco State University,
1600 Holloway Avenue, San Francisco, CA 94132.}
 \email{ddysthe@mail.sfsu.edu, drd2144@columbia.edu}

\author{Chun-Kit Lai}

\address{Department of Mathematics, San Francisco State University,
1600 Holloway Avenue, San Francisco, CA 94132.}
 \email{cklai@sfsu.edu}

\subjclass[2010]{Primary 28A80, 60A10, Secondary: 60G51, 42A38.}
\keywords{Additive processes,  Brownian Motions, Fourier dimension, Hausdorff dimension }

\begin{abstract}
An additive process is a stochastic process with independent increments and that is continuous in probability. In this paper, we study the almost sure Hausdorff and Fourier dimension of the graph of continuous additive additive processes with zero mean.  Such processes can be represented as $X_t = B_{V(t)}$ where $B$ is Brownian motion and $V$ is a continuous increasing function. We show that these dimensions depend on the local uniform H\"{o}lder indices. In particular, if $V$ is locally uniformly bi-Lipschitz, then the Hausdorff dimension of the graph will be 3/2. 
We also show that the Fourier dimension almost surely is positive if $V$ admits at least one point with positive lower H\"{o}lder regularity. 

\medskip
 It is also possible to estimate the Hausdorff dimension of the graph through the $L^q$ spectrum of $V$. We will show that if $V$ is generated by a self-similar measure on ${\mathbb R}^{1}$ with convex open set condition, the Hausdorff dimension of the graph can be precisely computed by its $L^q$ spectrum.  An illustrating example of the Cantor Devil Staircase function, the Hausdorff dimension of the graph is $1+\frac12\cdot\frac{\log 2}{\log 3}$.   Moreover, we will show that the graph of the Brownian staircase surprisingly has Fourier dimension zero almost surely. 
\end{abstract}

\maketitle

\section{introduction}

Let $X_t$ be a real-valued stochastic process with continuous sample paths defined on a probability space $(\Omega,{\mathcal F}, {\mathbb P})$.  Determining almost surely different dimensions of random fractals generated by $X_t$ has  been an active research problem in fractal geometry and probability.  In this paper, we are interested in the graph of random processes, particularly additive processes with continuous sample paths. These graphs model many real-life applications such as financial asset prices. 

\medskip

In previous studies of dimensions of  random processes, stationary increments played an important role.  Processes with stationary independent increments and that are continuous in probabillity are called L\'{e}vy processes. Our interest, additive processes, possess independent increments and continuity in probability, but do not necessarily have stationary increments.  (See Section 2 for precise definition).	Clearly, additive processes contain all L\'{e}vy processes.  Classcial results of Blumental-Getoor \cite{BG1960} computed  the Hausdorff dimension of the image of Borel sets for L\'{e}vy processes using certain indices of the growth rate of the L\'{e}vy-Khintchine exponents.   Blumental-Getoor indices were generalized to additive processes by Yang and it was shown that the dimension of the images of additive processes can be computed through these indices \cite{Yang2007}.  In the survey paper by Xiao \cite{Xiao2003}, additive processes were mentioned, however, not much in depth work about the dimensions of their graphs or level sets were carried out since then. 

\medskip

  In studying graphs of random processes, the {\it graph of $X$ on  a Borel set $E\subset \R$} is defined by 
$$
{\mathcal G} (X,E) = \{(t, X_t): t\in E\}.
$$ 
It is well-known that L\'{e}vy processes with continuous sample paths must be Brownian motions. Taylor \cite{Taylor1953} showed that the almost sure Hausdorff dimension of the graph of Brownian motion over an interval is $3/2$. This work was generalized to Gaussian process with stationary increments by Orey \cite{Orey1970} and later to Gaussian random fields by Adler \cite{Adler1977}. In particular, the Hausdorff dimension of the graph of fractional Brownian Motion with Hurst index $H$ is $2-H$. For more basic results about graphs, images and level sets of Brownian motions, readers are referred to Falconer \cite[Chapter 16]{Falconer2013} and M\"{o}rters and Peres \cite{MP2012}. For related developments of the Hausdorff dimension of the graph of L\'{e}vy processes and Gaussian random fields, readers are referred to the survey paper by Xiao \cite{Xiao2013,Xiao2003}.

\medskip

Apart from Hausdorff dimensions, the study of Fourier dimension was also very well-received in recent research. For a Borel measure $\mu$ on $\R^2$, the Fourier transform is defined to be 
$$
\widehat{\mu}(\xi) = \int e^{-2\pi i \xi\cdot x}d\mu(x).
$$
The {\it Fourier dimension} of a Borel set $E$ is defined to be 
$$
\mbox{dim}_F(E) = \sup \{\alpha\ge 0: \exists \ \mbox{finite Borel measure $\mu$ supported on $E$ s.t.} \  |\widehat{\mu}(\xi)|  \lesssim |\xi|^{-\alpha/2}\}.
$$
Through potential theoretic methods, it is known that  Fourier dimension is majorized by the Hausdorff dimension for Borel sets. Sets whose Fourier dimension equals their Hausdorff dimension are called {\it Salem sets}, and a measure that admits Fourier decay is called a {\it Rajchman measure}. Fourier dimension encodes many geometric and arithmetic properties of the underlying set that Hausdorff dimension is blind to. However, determining if a set supports a Rajchman measure or if it is Salem is not a straightforward task. Recently, sets with positive Fourier dimension, or sets supporting a Rachman measure, were determined in the study of spectral theory of hyperbolic geometry \cite{BD2017}, self-affine measures \cite{LS2020}, and Gibbs measures associated with Gauss maps \cite{JS2016}. In particular, sets of well-approximable numbers found in diophantine approximation are known to be the only deterministic class of Salem sets \cite{Hambrook2017}.

\medskip

Salem sets and Rajchman measures are ubiquitous in the theory of random processes. Initiated by Kahane in the 1960s, it has been known that the image of a set under fractional Brownian motion is almost surely Salem \cite{KSbook}. This was generalized to Gaussian random fields by Shieh and Xiao in \cite{SX2006}. In the same paper, Shieh and Xiao asked if the graph or the level sets of fractional Brownian motion (fBM) are almost surely Salem. Mukeru \cite{Mukeru2018} recently showed that the zero sets of  fBM are almost surely Salem. However, Fraser, Orponen and Sahlsten proved that the Fourier dimension of the graph of any function defined on $[0,1]$ is at most 1 \cite{FOS2014}, which in turns shows that graph of fBM is almost surely not Salem. Fraser and Sahlsten  further showed that the Fourier dimension of the graph of Brownian motion is 1 almost surely \cite{FS2018}, leaving the case of fBM as an open problem. 

\medskip

\subsection{Main results.}  Our paper continues the line of research of studying Hausdorff and Fourier dimension of the graph of stochastic processes by focusing on centered continuous additive processes. In determining the dimensions of stationary Gaussian processes or random fields, the regularity of the variance function
$$
\sigma^2(s,t) = \E[|X_s-X_t|^2]
$$
plays an important role. For stationarity, $\sigma^2(s,t)$ depends only on $|s-t|$. For isotropic Gaussian random fields, $\sigma^2(s,t)\asymp g(|s-t|)$ for some positive function $g:\R^+\to\R^+$ was assumed.   These assumptions can be regarded as global properties.   The key feature of our result is the requirement of the local regularity properties of the variance function actually determine the dimensions of continuous additive processes.

\medskip

A very detailed classification of additive processes can be found in It\^{o}'s book \cite{ito2013stochastic}. Indeed, the increments of additive processes with continuous sample paths must be normally distributed. We further assume that the process $X_t$ has zero mean (i.e. $\E[X_t] = 0$ for all $t$) and we say the process is {\it centered}. This process  is a continuous martingale and the classical Dambis-Dubin-Schwartz theorem (see e.g. \cite[p. 174]{KSbook}) shows that it can be represented as a time-changed Brownian motion with deterministic quadratic variation. More precisely,
$$
X_t = B_{V(t)}
$$
where $B_t$ is Brownian motion and $V(t)$ is an increasing function (see Theorem \ref{dds} for an independent proof). Such a process is also sometimes be referred as multifractal processes (see e.g. \cite{Riedi} for its statistical properties).  Here, one can see that $\sigma^2(s,t) = V(t)-V(s)$.  Our estimation of the almost sure Hausdorff and Fourier dimension of the graph of additive processes will be based on this representation and the local regularity of $V$.

\medskip

As one can see, if $V$ is constant over an interval, the sample paths of $X_t$ will be constant. If there is only finitely many subintervals for which $V$ is constant, the dimension determination problem can be reduced to the case that $V$ is strictly increasing on an interval $J$.  The local regularity of a strictly increasing function $V$ will be captured through {\it the upper and lower local uniform H\"{o}lder indices} at a point $t\in J$, denoted respectively by $\alpha^{\ast}(t)$ and $\alpha_{\ast}(t)$, of $V$ which are defined as follows:
\begin{definition} Let $V$ be a strictly increasing function defined on an interval $J$.
The {\it upper local uniform H\"{o}lder index} at a point $t\in J$   is 
$$
\alpha^*(t) = \sup \left\{\alpha\in [0, \infty)\,:\, \lim_{\delta\rightarrow 0}\left( \sup_{u_1, u_2 \in  I(t,\delta),  \ u_1\ne u_2} \frac{|V(u_1) - V(u_2)|}{|u_1 - u_2|^\alpha}\right) = 0 \right\}.
$$
The {\it lower local uniform H\"{o}lder index} at a point $t\in J$ is
$$
 \alpha_*(t) = \inf \left\{\alpha\in [0, \infty)\,:\, \lim_{\delta\rightarrow 0}\left( \sup_{u_1, u_2 \in I(t,\delta),  \ u_1\ne u_2} \frac{|u_1 - u_2|^\alpha}{|V(u_1) - V(u_2)|}\right) = 0 \right\}
$$
Finally, we define
$$
\alpha^{\ast}(J) = \inf_{t\in J} \alpha^{\ast}(t) \  \ \mbox{and} \ \  \alpha_{\ast} (J)= \inf_{t\in J} \alpha_{\ast}(t).
$$
\end{definition}
Notice that  $\alpha^{\ast}(t)\le \alpha_{\ast}(t)$. Hence, taking infimum, ${\alpha}^{\ast}\le{\alpha}_*$. 
The idea of the local H\"{o}lder index was motivated from Orey \cite{Orey1970} who studied stationary Gaussian processes.
The following two theorems summarize our main results on the almost sure estimates on Hausdorff and Fourier dimension of centered continuous additive processes.

\medskip

\begin{theorem}\label{hausmainresult}
Let $(X(t))_{t\in J}$ be a centered continuous additive process which is represented as $X_t = B_{V(t)}$. Suppose that $V(t)$ is strictly increasing on $J$. Then, almost surely
\begin{equation}
\max\left\{2 - \frac{{\alpha}_{\ast}}{2}, 1\right\} \leq \dim_H \ \mathcal{G}(X, J)\leq 2 - \frac{{\alpha}^{\ast}}{2},
\end{equation}
where ${\alpha}_{\ast} = {\alpha}_*(J)$ and ${\alpha}^{\ast} = {\alpha}^{\ast}(J)$.
\end{theorem}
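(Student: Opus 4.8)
The plan is to prove the upper and lower bounds separately using standard fractal-geometric techniques adapted to the time-changed Brownian motion representation $X_t = B_{V(t)}$. The lower bound $\dim_H \mathcal{G}(X,J) \ge 1$ is trivial since $\mathcal{G}(X,J)$ projects onto $J$, which has dimension $1$; so the content of the lower bound is the estimate $\dim_H \mathcal{G}(X,J) \ge 2 - \alpha_\ast/2$ whenever $\alpha_\ast < 2$. I would establish this by the energy (potential-theoretic) method: it suffices to show that for every $s < 2 - \alpha_\ast/2$ there is almost surely a (non-trivial) measure on $\mathcal{G}(X,J)$ of finite $s$-energy, which I would take to be the push-forward of Lebesgue measure on $J$ under $t \mapsto (t, X_t)$. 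Computing the expected energy reduces to bounding, for $s = 1 + \gamma$ with $\gamma < 1 - \alpha_\ast/2$,
\begin{equation}
\int_J \int_J \E\left[ \left( |u_1 - u_2|^2 + |X_{u_1} - X_{u_2}|^2 \right)^{-s/2} \right] du_1\, du_2.
\end{equation}
Since $X_{u_1} - X_{u_2} \sim \mathcal{N}(0, V(u_1) - V(u_2))$, the inner expectation is comparable to $|u_1-u_2|^{-s}$ when $V(u_1)-V(u_2) \lesssim |u_1-u_2|^2$ and to $|u_1 - u_2|^{-1} (V(u_1) - V(u_2))^{-\gamma}$ otherwise; here the definition of $\alpha_\ast(J)$ enters, because $\alpha > \alpha_\ast$ forces $(V(u_1)-V(u_2))^{-1} \lesssim |u_1-u_2|^{-\alpha}$ locally uniformly, and after a compactness/partition-of-$J$ argument one gets a global bound, making the double integral finite for $\gamma < 1 - \alpha/2$. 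Letting $\alpha \downarrow \alpha_\ast$ and invoking Frostman's lemma (Falconer \cite{Falconer2013}) finishes this half.

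For the upper bound $\dim_H \mathcal{G}(X,J) \le 2 - \alpha^\ast/2$, I would use a direct covering argument. Fix $\alpha < \alpha^\ast$; then near every point the increments of $V$ satisfy $|V(u_1)-V(u_2)| \lesssim |u_1-u_2|^\alpha$ (locally uniformly, hence, by compactness, globally on $J$ after subdividing). Combined with the uniform modulus of continuity of Brownian motion — $|B_t - B_s| \lesssim \sqrt{|t-s|\log(1/|t-s|)}$ — this yields, for the oscillation of $X$ on an interval of length $h$, a bound of order $h^{\alpha/2}\sqrt{\log(1/h)}$. Partitioning $J$ into $\sim 1/h$ intervals of length $h$, the graph over each is contained in a box of width $h$ and height $\lesssim h^{\alpha/2}\sqrt{\log(1/h)}$, which is covered by $\sim h^{\alpha/2 - 1}\sqrt{\log(1/h)}$ squares of side $h$. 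Summing the $s$-th powers over all $\sim 1/h$ intervals gives a total mass of order $h^{-1} \cdot h^{(\alpha/2-1)s} \cdot h^s \cdot (\log(1/h))^{s/2}$, which tends to $0$ as $h \to 0$ whenever $s > 2 - \alpha/2$. Hence $\dim_H \mathcal{G}(X,J) \le 2 - \alpha/2$ almost surely, and letting $\alpha \uparrow \alpha^\ast$ along a countable sequence gives the result.

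I expect the main obstacle to be the lower bound, specifically the passage from the \emph{local} uniform Hölder control implicit in the definition of $\alpha_\ast(t)$ to a \emph{global} estimate on $J$ that is uniform enough to make the energy integral converge. The definition only guarantees, for each $t$ and each $\alpha > \alpha_\ast(t)$, the existence of some radius $\delta(t,\alpha)$ on which $(V(u_1)-V(u_2))^{-1} \le |u_1-u_2|^{-\alpha}$; a priori $\delta(t,\alpha) \to 0$ as $\alpha \downarrow \alpha_\ast(t)$ and the $\delta$'s need not be bounded below across $t$. One must therefore be careful: fix $\alpha$ strictly between $\alpha_\ast(J)$ and the target, cover $J$ by finitely many intervals $I(t_i, \delta(t_i,\alpha))$ (using $\alpha > \alpha_\ast(J) \ge \alpha_\ast(t_i)$ for all $i$ — here we may need $\alpha_\ast(J) = \inf_t \alpha_\ast(t)$ attained or approximated, and a slightly larger $\alpha$ handles the boundary behaviour), split the double integral over $J\times J$ into the near-diagonal pieces handled by the local Hölder bound and the off-diagonal piece where $|u_1-u_2|$ is bounded below and the integrand is trivially bounded. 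A secondary technical point is handling pairs $(u_1,u_2)$ lying in different patches of the cover; this is absorbed into the off-diagonal regime. Once this bookkeeping is done, the probabilistic input (Gaussian tail estimate for the inner expectation) and Frostman's lemma are routine.
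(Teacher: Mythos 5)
Your upper bound is sound and is essentially the paper's argument: since $\alpha<\alpha^{\ast}(J)=\inf_{t\in J}\alpha^{\ast}(t)$ forces $\alpha<\alpha^{\ast}(t)$ at \emph{every} $t$, the local upper H\"older estimate for $V$ holds on a neighborhood of each point, compactness gives a finite cover, and combining with the H\"older regularity of Brownian motion yields $\dim_H\mathcal{G}(X,J)\le 2-\alpha/2$; the paper does the same via Lemma \ref{graph lemma} and countable stability. The lower bound, however, contains a genuine gap, precisely at the step you flagged. You justify the finite cover of $J$ by writing ``$\alpha>\alpha_{\ast}(J)\ge\alpha_{\ast}(t_i)$ for all $i$'', but this inequality is backwards: $\alpha_{\ast}(J)=\inf_{t\in J}\alpha_{\ast}(t)\le\alpha_{\ast}(t_i)$, so choosing $\alpha$ slightly above the infimum gives the lower H\"older control $|u_1-u_2|^{\alpha}\le|V(u_1)-V(u_2)|$ only near those points $t$ where $\alpha_{\ast}(t)$ is close to the infimum, not on all of $J$. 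Hence no compactness/partition argument can upgrade it to a global bound, and your choice of measure (push-forward of Lebesgue measure on all of $J$) genuinely fails: for $V(t)=t^{6}$ on $J=[0,1]$ (Example \ref{example6.1}, where $\alpha_{\ast}(J)=1$ but $\alpha_{\ast}(0)=6$) the expected $s$-energy of that measure is infinite for every $s$ near the target value $3/2$, because pairs $u_1,u_2$ near $0$ contribute an integrand comparable to a nonintegrable power of $u$; so the expectation method cannot close the argument on the whole of $J$.

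The fix — and the paper's actual proof — is that no global estimate is needed at all. Since the claimed lower bound $2-\alpha_{\ast}(J)/2$ equals $\sup_{t}\bigl(2-\alpha_{\ast}(t)/2\bigr)$, monotonicity of Hausdorff dimension lets you pick a single point $t$ with $\alpha_{\ast}(t)\le\alpha_{\ast}(J)+\epsilon$, push forward Lebesgue measure on the small interval $J\cap I(t,\delta_t)$ alone (where \eqref{eq1.1} holds), bound the expected energy of that local measure, conclude $\dim_H\mathcal{G}(X,J)\ge\dim_H\mathcal{G}(X,J\cap I(t,\delta_t))\ge 2-\alpha_{\ast}(t)/2$, and let $\epsilon\to0$. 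A secondary slip: your claimed comparison of the inner expectation with $|u_1-u_2|^{-1}\bigl(V(u_1)-V(u_2)\bigr)^{-\gamma}$ is not correct and, even taken at face value, does not produce the stated threshold $\gamma<1-\alpha/2$; the estimate you need is the one in Lemma \ref{boundonexpec}, namely $\E\bigl[(|u_1-u_2|^2+|X_{u_1}-X_{u_2}|^2)^{-s/2}\bigr]\lesssim |u_1-u_2|^{1-s}\,|V(u_1)-V(u_2)|^{-1/2}$, which combined with the local lower H\"older bound gives an integrand $\lesssim|u_1-u_2|^{1-s-\alpha/2}$ and hence exactly the condition $s<2-\alpha/2$.
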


\medskip

\begin{theorem}\label{fouriermainresult}
Let $(X(t))_{t\in J}$ be a centered continuous additive process which is represented as $X_t = B_{V(t)}$. Suppose that $V(t)$ is strictly increasing on $J$. Suppose that $T = V^{-1}$ admits a point whose upper local uniform H\"{o}lder index is positive. Then almost surely has a positive Fourier dimension. 
\end{theorem}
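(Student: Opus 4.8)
The plan is to construct the natural graph measure over a localized subinterval and to show it is a Rajchman measure with a polynomial Fourier decay rate. First I would localize using the hypothesis: if $s_0$ is a point of $V(J)$ at which the upper local uniform H\"older index of $T=V^{-1}$ is some number $>\beta>0$, then by the very definition of that index there is a subinterval $I_0\subset V(J)$ containing $s_0$ on which $|T(u_1)-T(u_2)|\le|u_1-u_2|^{\beta}$; putting $t_i=T(u_i)$ and $J_0=T(I_0)$ (a subinterval of $J$ of positive length), this becomes
\begin{equation}\label{eq:Vlowerbd}
V(t)-V(s)\ \ge\ |t-s|^{\gamma},\qquad s,t\in J_0,\qquad\gamma:=\tfrac1\beta ,
\end{equation}
so the Gaussian variance $\sigma^2(s,t)=V(t)-V(s)$ of the increment $X_t-X_s$ is bounded below by a fixed power of $|t-s|$ on $J_0$. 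Let $\mu$ be the push-forward of Lebesgue measure on $J_0$ under $t\mapsto(t,X_t)$: a finite Borel measure carried by $\mathcal{G}(X,J_0)\subset\mathcal{G}(X,J)$ with $\widehat\mu(\xi)=\int_{J_0}e^{-2\pi i(\xi_1 t+\xi_2 X_t)}\,dt$. It then suffices to find $\alpha>0$ with $|\widehat\mu(\xi)|\lesssim|\xi|^{-\alpha/2}$ almost surely for all large $|\xi|$, and I would split this into a \emph{vertical} regime $|\xi_2|\ge|\xi|^{1/4}$ and a \emph{horizontal} regime $|\xi_2|<|\xi|^{1/4}$ (where $|\xi_1|\asymp|\xi|$).

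In the vertical regime I would run the moment method. Fix an even integer $2n$. Expanding $|\widehat\mu(\xi)|^{2n}$ over $J_0^{2n}$ and using that $(X_{t_1},\dots,X_{t_{2n}})$ is centered jointly Gaussian gives
\[
\E\!\left[\,|\widehat\mu(\xi)|^{2n}\,\right]=\int_{J_0^{2n}}e^{-2\pi i\xi_1\sigma(\mathbf{t})}\,e^{-2\pi^{2}\xi_2^{2}\Sigma(\mathbf{t})}\,d\mathbf{t} ,
\]
with $\sigma(\mathbf{t})=\sum_{j\le n}t_j-\sum_{j>n}t_j$ and $\Sigma(\mathbf{t})=\mathrm{Var}\!\left(\sum_{j\le n}X_{t_j}-\sum_{j>n}X_{t_j}\right)$. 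Since $X=B\circ V$, a summation by parts along the ordered times $t_{(1)}\le\dots\le t_{(2n)}$ gives $\Sigma(\mathbf{t})=\sum_m S_m^{2}\bigl(V(t_{(m)})-V(t_{(m-1)})\bigr)$, where $S_m$ is the sum of the $\pm1$'s attached to $t_{(m)},\dots,t_{(2n)}$; a parity check shows $S_m\neq0$ for every even $m$, so by \eqref{eq:Vlowerbd}, $\Sigma(\mathbf{t})\ge\sum_{m\ \mathrm{even}}\bigl(t_{(m)}-t_{(m-1)}\bigr)^{\gamma}$, a sum over $n$ disjoint consecutive gaps. Discarding the oscillatory factor and integrating out the resulting product of one-dimensional Gaussian tails yields $\E[|\widehat\mu(\xi)|^{2n}]\lesssim_{n,\gamma}|\xi_2|^{-2n/\gamma}\le C_n|\xi|^{-n/(2\gamma)}$ throughout the vertical regime. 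A Chebyshev estimate, a union bound over a polynomially dense net of each dyadic annulus $\{|\xi|\asymp2^{k}\}$, the a.s.\ finite Lipschitz bound $\|\nabla\widehat\mu\|_\infty\le2\pi\int_{J_0}(|t|+|X_t|)\,dt<\infty$ (to pass from the net to all $\xi$), and Borel--Cantelli then give $|\widehat\mu(\xi)|\lesssim|\xi|^{-\alpha/2}$ a.s.\ on the vertical regime, once $n$ is fixed large enough (depending only on $\gamma$) that $n/(2\gamma)$ exceeds the net exponent; for instance $\alpha=1/(4\gamma)$ works.

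In the horizontal regime $|\xi_1|\asymp|\xi|$ and the Gaussian factor is essentially inert, so decay must come from oscillation in $t$. Partition $J_0$ into $\asymp|\xi_1|$ intervals $I_k$ of length $|\xi_1|^{-1}$; since $\int_{I_k}e^{-2\pi i\xi_1 t}\,dt=0$ one has $\bigl|\int_{I_k}e^{-2\pi i\xi_1 t}e^{-2\pi i\xi_2 X_t}\,dt\bigr|\le|\xi_1|^{-1}\,2\pi|\xi_2|\,\mathrm{osc}(X,I_k)$, hence, summing (and absorbing an $O(|\xi_1|^{-1})$ boundary term),
\[
|\widehat\mu(\xi)|\ \lesssim\ |\xi_1|^{-1}\Bigl(1+|\xi_2|\sum_k\mathrm{osc}(X,I_k)\Bigr).
\]
Because $V$ is increasing, $\sum_k\mathrm{osc}(X,I_k)=\sum_k\mathrm{osc}(B,V(I_k))$ and $\sum_k|V(I_k)|=|V(J_0)|$; L\'evy's modulus of continuity for $B$ together with the concavity of $h\mapsto\sqrt{h\log(1/h)}$ (Jensen) then bound this sum, almost surely and uniformly in $\xi$, by $\lesssim\sqrt{|\xi_1|\log|\xi_1|}$. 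Thus $|\widehat\mu(\xi)|\lesssim|\xi|^{-1/2}|\xi_2|\sqrt{\log|\xi|}\le|\xi|^{-1/5}$ on the horizontal regime, \emph{on the single full-measure event on which the modulus estimate holds} --- no Borel--Cantelli is needed here. Combining the two regimes, almost surely $|\widehat\mu(\xi)|\lesssim|\xi|^{-\min(\alpha/2,\,1/5)}$ for all large $|\xi|$, whence $\dim_F\mathcal{G}(X,J)\ge\dim_F\mathcal{G}(X,J_0)>0$.

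The hard part is the horizontal regime. There $e^{-2\pi^{2}\xi_2^{2}\Sigma}\approx1$, so the randomness barely helps, and the moment method is useless --- it only produces a power of $|\xi|$ independent of $n$, far too weak to survive the union bound; one is therefore pushed to a deterministic-type bound conditioned on the Brownian modulus, and the real point is that the total oscillation $\sum_k\mathrm{osc}(B,V(I_k))$ over the possibly wildly uneven pieces $V(I_k)$ still only costs $\sqrt{|\xi_1|}$ up to logs, which is exactly where monotonicity of $V$ (so that the $|V(I_k)|$ sum to a constant) and Jensen enter. A secondary, routine issue is the interface $|\xi_2|\asymp|\xi|^{1/4}$: one must pick the net in the vertical regime so that the lattice point nearest a given $\xi$ still lies in that regime, and verify that the $\|\nabla\widehat\mu\|_\infty$-interpolation error is negligible.
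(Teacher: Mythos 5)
Your proposal is correct in its essentials, but it follows a genuinely different route from the paper's. The paper localizes exactly as you do (using the positive upper index of $T$ to get $|T(x)-T(y)|\lesssim|x-y|^{\gamma}$ on a small interval) and then invokes its quantitative Theorem \ref{thm_Fdim}, whose proof splits frequencies by \emph{angle} rather than by the size of $|\xi_2|$: in the vertical sector it combines Lemma \ref{lemG} with Kahane's image-measure moment bound (Theorem \ref{theorem_Kahane}) applied to the measure $dT$, and in the horizontal sector it introduces the random time $\tau$ at which the phase $Y_t$ hits a lattice value, applies It\^{o}'s formula to $e^{iY}$ to trade the drift integral for a stochastic integral plus a $dV$-term, and controls the stochastic integral by the Burkholder--Davis--Gundy inequality, keeping track of constants $C^q q^{cq}$ so that Kahane's all-$q$ Lemma \ref{randmeaslem} converts the moment bounds into almost sure decay. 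You instead (i) prove the vertical moment bound by hand, expanding the $2n$-th moment of the Gaussian phase and using the Abel-summation/parity observation that the partial sign sums $S_m$ are odd (hence nonzero) at even positions, together with the variance lower bound $V(t)-V(s)\ge|t-s|^{1/\beta}$ equivalent to the H\"{o}lder bound on $T$; (ii) replace the entire It\^{o}/BDG horizontal analysis by a deterministic cancellation argument over period-length intervals, bounding $\sum_k\mathrm{osc}(B,V(I_k))$ via L\'{e}vy's modulus of continuity and Jensen applied to the concave function $h\mapsto\sqrt{h\log(1/h)}$, on a single full-measure event and with no union bound; and (iii) pass to almost sure decay with a fixed large moment, a polynomial net, the Lipschitz bound on $\widehat{\mu}$, and Borel--Cantelli, rather than the all-$q$ lemma. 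What each approach buys: the paper's machinery produces the explicit exponent $2\gamma/(2+\gamma)$ (and the reusable Lemmas \ref{randmeaslem}, \ref{lemG}, Proposition \ref{PropH}), while your argument is more elementary --- no stochastic calculus at all --- at the cost of an unoptimized exponent such as $\min\{1/(8\gamma),\,2/5\}$, which is all the qualitative statement requires. Two points you flag should indeed be written out but are routine: the regime interface (choose the net so that neighbours of vertical frequencies stay in a slightly enlarged vertical regime), and the fact that $\sqrt{h\log(1/h)}$ is concave only for small $h$, so the finitely many intervals $V(I_k)$ of length above the random L\'{e}vy threshold must be handled separately by the a.s.\ bound $\mathrm{osc}\le 2\max|B|$, contributing only a random constant.
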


\medskip

The proof of Theorem \ref{fouriermainresult} will be adapted from Fraser and Sahlsten. We will reorganize the presentation of the proof by extracting the main general lemmas required for their argument to work. We hope that these will be helpful for future research. A more quantitative version of Theorem \ref{fouriermainresult} will be proved in Theorem \ref{thm_Fdim}. In the proof, stochastic calculus and It\^{o}'s formula was used in estimating  Fourier decay in the horizontal directions. We here indicate the main difference between our proof and the one in \cite{FS2018}.  For Brownian motion, it was a coincidence that the drift measure and the quadratic variation measure are both Lebesgue measure, which allowed Fraser and Sahlsten \cite{FS2018} to obtain their sharp Fourier dimension result for Brownian motion. In additive processes where the drift measure is still the Lebesgue measure, but the quadratic variation is now $dV$, we will need an adjustment to obtain our decay result.

\medskip

The proof of Theorem \ref{hausmainresult} will be a refined version of  the classical proofs by Taylor and Orey \cite{Taylor1953, Orey1970}. However, from an elementary analysis argument, one can prove that $\alpha^{\ast}\le 1$ and $\alpha_{\ast}\ge 1$. Therefore, equality in Theorem \ref{hausmainresult} can only be  attained when $\alpha^{\ast} = \alpha_{\ast} =1$ and the Hausdorff dimension is 3/2. Nonetheless, we will see that Theorem \ref{hausmainresult}, together with the countable stability of Hausdorff dimension, settles the Hausdorff dimension of most common regular strictly increasing functions. 

\medskip

\begin{corollary} \label{corollaryhaus}
Let $(X(t))_{t\in J}$ be a centered continuous additive process which is represented as $X_t = B_{V(t)}$. Suppose that, except finitely many points in $J$,  $V$ is locally bi-Lipschitz. Then $\dim_H \ \mathcal{G}(X, J) = 3/2$ and $\dim_F \ \mathcal{G}(X, J) \ge 2/3$.
\end{corollary}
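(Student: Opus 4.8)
The plan is to obtain both conclusions from Theorems~\ref{hausmainresult} and~\ref{fouriermainresult} (together with the quantitative refinement Theorem~\ref{thm_Fdim}) and from the way the two dimensions behave under unions. First I would record two elementary facts. \emph{(i)} If $V$ is locally bi-Lipschitz at every point of an interval $K$, then $\alpha^{\ast}(K)=\alpha_{\ast}(K)=1$: near a point $t$ one has $c\,|u_1-u_2|\le|V(u_1)-V(u_2)|\le C\,|u_1-u_2|$, so $|V(u_1)-V(u_2)|/|u_1-u_2|^{\alpha}\asymp|u_1-u_2|^{1-\alpha}$, which tends to $0$ as $\delta\to0$ exactly when $\alpha<1$, forcing $\alpha^{\ast}(t)=1$; symmetrically $|u_1-u_2|^{\alpha}/|V(u_1)-V(u_2)|\asymp|u_1-u_2|^{\alpha-1}\to0$ exactly when $\alpha>1$, forcing $\alpha_{\ast}(t)=1$. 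Taking infima over $t\in K$ gives the claim (consistent with $\alpha^{\ast}\le1\le\alpha_{\ast}$ in general). \emph{(ii)} Since only finitely many points of $J$ are ``bad'', $V$ cannot be constant on any subinterval, hence $V$ is strictly increasing on all of $J$; moreover $J$ with the bad points removed is a finite union of intervals $J_1,\dots,J_m$ on each of which $V$ is locally bi-Lipschitz.

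For the Hausdorff dimension, fix $i$ and exhaust $J_i$ by an increasing sequence of compact subintervals $K_n\nearrow J_i$ with each $K_n$ contained in the interior of $J_i$. By \emph{(i)}, $\alpha^{\ast}(K_n)=\alpha_{\ast}(K_n)=1$, so Theorem~\ref{hausmainresult} applied to $(X_t)_{t\in K_n}$ yields, almost surely,
\[
\frac32=\max\left\{2-\frac12,\,1\right\}\le\dim_H\mathcal{G}(X,K_n)\le2-\frac12=\frac32 .
\]
Intersecting these countably many almost sure events and using the countable stability of Hausdorff dimension, $\dim_H\mathcal{G}(X,J_i)=\sup_n\dim_H\mathcal{G}(X,K_n)=3/2$ almost surely; taking the maximum over the finitely many $i$ and discarding the graph over the finite bad set (which has Hausdorff dimension $0$) gives $\dim_H\mathcal{G}(X,J)=3/2$ almost surely.

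For the Fourier dimension, observe that $T=V^{-1}$ is locally bi-Lipschitz at the $V$-image of any good point of $J$, so by \emph{(i)} its upper local uniform H\"older index equals $1$ there; in particular the hypothesis of Theorem~\ref{fouriermainresult} holds for $(X_t)_{t\in J}$. I would then invoke the quantitative version, Theorem~\ref{thm_Fdim}, which provides an explicit almost sure lower bound for $\dim_F\mathcal{G}(X,J)$ depending on this H\"older index; in the present case the index is $1$ and that bound is $2/3$, giving $\dim_F\mathcal{G}(X,J)\ge2/3$ almost surely. (Alternatively one may apply Theorem~\ref{thm_Fdim} on a small subinterval around a good point and use that $\dim_F$ is monotone under inclusion of sets.)

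The two theorems carry essentially all the weight, so the ``hard part'' here is bookkeeping rather than analysis. The points that genuinely need care are: the finitely many bad points cannot be handed to Theorem~\ref{hausmainresult} directly, because their one-sided neighbourhoods need not be bi-Lipschitz and $\alpha^{\ast}(t)$ there may be strictly less than $1$, which would only give the upper bound $\dim_H\le2-\alpha^{\ast}(J)/2>3/2$ --- hence the exhaustion by compactly contained subintervals; and one must check that the relevant combination of finitely and countably many almost sure statements is again an almost sure statement, and that the union/inclusion behaviour used is in the favourable direction (finite stability for $\dim_H$, monotonicity for $\dim_F$). Once the quantitative Fourier estimate of Theorem~\ref{thm_Fdim} is available, the constant $2/3$ is simply its value at H\"older index $1$.
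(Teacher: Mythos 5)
Your proof is correct and follows essentially the same route as the paper: show $V$ is strictly increasing, decompose $J$ minus the finitely many bad points into countably many compact intervals on which the local indices equal $1$ (the paper uses explicit annular intervals $I_{i,k}$ shrinking toward each bad point, you use an exhaustion of each complementary interval by compactly contained subintervals -- the same idea), apply Theorem \ref{hausmainresult} with countable stability for $\dim_H=3/2$, and get $\dim_F\ge 2/3$ from Theorem \ref{thm_Fdim} with $\gamma=1$ plus monotonicity of Fourier dimension.
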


\medskip

Theorem \ref{fouriermainresult}  raises the natural question whether every strictly increasing function must possess a point with positive upper local uniform  H\"{o}lder index. Unfortunately, the answer is no. Hata \cite[Theorem 1.3]{Hata} constructed a strictly increasing and absolutely continuous function $H$ on $[0,1]$ such that for all $\alpha\in(0,1]$ and for all subinterval $J$ in $[0,1]$, 
$$
\sup_{x,y\in J, x\ne y} \frac{|H(x)-H(y)|}{|x-y|^{\alpha}} = \infty.
$$
For this function $H$,  $\alpha^{\ast} = 0$. We do not know if for such increasing function, the associated additive processes still admit some Fourier decay.  Moreover, as $H$ is absolutely continuous, we do not even know if Corollary \ref{corollaryhaus} can be generalized to $V$ being absolutely continuous.  

\medskip

 \subsection{A Multifractal result.} The local uniform H\"{o}lder indices resemble, but not exactly the same as, the notion of local dimension of the associated Lebesgue-Stieltjes integral $dV$. We may expect that certain multifractal formalism may hold for these local H\"{o}lder indices and one may obtain a very sharp result in Theorem \ref{hausmainresult} .  For related work about multifractal analysis of graph of functions, see \cite{DJ,Jin2011}. The main tools for multifractal analysis is the {\it $L^q$ spectrum} for a Borel probability measure $\mu$: 
   $$
 \tau_{\mu}(q) =  \liminf_{r\to0} \frac{1}{\log r} \log \left( \sup_{{\mathcal B}} \sum_{B\in {\mathcal B}} (\mu(B))^q \right)
 $$
  where supremum is taken over all families of disjoint closed balls ${\mathcal B}$ where each $B\in{\mathcal B}$ has the center inside the support of $\mu$.   We say that $\mu$ obeys the {\it multifractal formalism} if  the Hausdorff dimension of the level set of local dimensions
  $$
K_{\alpha} = \left\{ x: \lim_{r\to 0} \frac{\log \mu (B(x,r))}{\log r} = \alpha\right\}
$$
can be recovered by the Legendre transform of the $L^q$ spectrum as follows:
$$
\mbox{dim}_H K_{\alpha} = \tau_V^{\ast}(\alpha)= \inf_{q\in{\mathbb R}}\{q\alpha-\tau_{\mu}(q)\}.
$$ 
Multifractal formalism over all ${\mathbb R}$ or certain specified ranges has been a central study to fractal geometry community verified for large classes of fractal measures. In particular, for self-similar measures with open set condition and some self-similar measures with exact overlaps (see \cite{FengLau}, \cite{BF2021}, \cite{S2019} and the reference therein). In our study, we know that $V(t) = \mu [0,t]$ generates an increasing function and it can be used generate additive processes and we can study the dimensions of its graph. 

\medskip

   In a paper \cite{Jin2011} suggested by the referee, Jin obtained a full multifractal description of graph of $b$-adic Mandelbrot Cascade functions and computed the dimension of its graph in terms of its $L^q$ spectrum. Inspired by this paper, we attempt to study the Hausdorff dimension of the graph of additive processes through its $L^q$ spectrum. The following results are obtained for a type of increasing functions. We notice that these results hold also to fractional Brownian Motion $B^H$ with Hurst index $H\in(0,1)$. The precise definition of self-similar measures will be given in Section \ref{section-self-similar}. Convex open set condition means that we can choose $(0,1)$ to satisfy the well-known open set condition. 
    
   \begin{theorem}\label{theorem_multifractal}
   Let $\mu$ be the self-similar measure on $[0,1]$ generated by an IFS satisfying the convex open set condition and let $V(x) = \mu [0,x]$ be the associated increasing function. Let also $X^H(t) = B^H_{V(t)}$. Then 
$$
{\mbox{\rm dim}}_H \ {\mathcal G}(X^H, [0,1])  =  1- \tau_{\mu}\left(H\right).
$$
\end{theorem}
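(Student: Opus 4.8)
The plan is to compute upper and lower bounds for $\dim_H \mathcal{G}(X^H,[0,1])$ separately and show they coincide with $1-\tau_\mu(H)$. For a self-similar measure satisfying the convex open set condition, the multifractal formalism is known to hold: the $L^q$ spectrum $\tau_\mu$ is smooth and strictly concave on $\mathbb{R}$, and for every $q$ in the relevant range the level set $K_{\tau_\mu'(q)}$ has full Hausdorff dimension equal to the Legendre transform value $\tau_\mu^*(\tau_\mu'(q)) = q\tau_\mu'(q)-\tau_\mu(q)$. The first step is therefore to translate the local uniform H\"older indices $\alpha^\ast(t),\alpha_\ast(t)$ of $V$ into the local dimensions of $\mu$. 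Since $V(x)=\mu[0,x]$, one expects $\alpha^\ast(t)=\alpha_\ast(t) = \liminf_{r\to 0}\frac{\log\mu(B(t,r))}{\log r}$ (or its $\limsup$) at $\mu$-typical points, but the uniform H\"older index requires control that is \emph{uniform over a neighborhood}, which is exactly what the open set condition and the comparability of cylinder measures deliver for self-similar measures. I would prove: for each fixed $\alpha$ in the range of $\tau_\mu'$, there is a compact set $E_\alpha\subset[0,1]$ with $\dim_H E_\alpha = \tau_\mu^*(\alpha)$ on which $\mu(B(t,r))\asymp r^\alpha$ uniformly, and consequently $V$ restricted near $E_\alpha$ is uniformly $(\alpha-\e)$-H\"older above and $(\alpha+\e)$-H\"older below.

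The second step is the lower bound. Fix $\alpha$ and the set $E_\alpha$ above, and consider $\mathcal{G}(X^H, E_\alpha)$. On $E_\alpha$, the time-changed process satisfies $X^H_t = B^H_{V(t)}$ with $V$ uniformly bi-H\"older of exponent $\alpha$ (up to $\e$), so $\sigma^2(s,t) = |V(s)-V(t)|^{2H}\asymp |s-t|^{2H\alpha}$ uniformly for $s,t$ near $E_\alpha$. This is precisely the setting of the Taylor–Orey type lower bound argument (the refined version underlying Theorem \ref{hausmainresult}, adapted to the fBM case): one builds a Frostman measure on $\mathcal{G}(X^H,E_\alpha)$ — for instance the lift of $\mu|_{E_\alpha}$ — and estimates its $s$-energy using the Gaussian density of increments, obtaining a.s.
$$
\dim_H \mathcal{G}(X^H, E_\alpha) \ge \dim_H E_\alpha + (1-H\alpha) = \tau_\mu^*(\alpha) + 1 - H\alpha.
$$
Now optimize over $\alpha$. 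Writing $\alpha = \tau_\mu'(q)$, we get $\tau_\mu^*(\alpha)+1-H\alpha = q\tau_\mu'(q)-\tau_\mu(q)+1-H\tau_\mu'(q) = 1-\tau_\mu(q) + (q-H)\tau_\mu'(q)$, and this is maximized exactly at $q=H$ (where the derivative in $q$ vanishes), giving the lower bound $1-\tau_\mu(H)$.

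The third step is the upper bound. Here I would use a covering argument driven by the $L^q$ spectrum directly rather than passing through a single $\alpha$. Partition $[0,1]$ using the natural $n$-th generation cylinders $\{I_w\}$ of the IFS; on each $I_w$ the oscillation of $X^H$ over $I_w$ is, with high probability, of order $(\mu(I_w))^{H}\sqrt{\log(\text{number of cylinders})}$ by a standard maximal inequality for fBM together with a Borel–Cantelli argument. Then $\mathcal{G}(X^H,I_w)$ is covered by roughly $(\mu(I_w))^{H}/|I_w|$ squares of side $|I_w|$ (using $|I_w|\le (\mu(I_w))^H$ since $H\le 1$ and the measure is comparable to $|I_w|^{\tau}$-type behavior, one has to be a little careful — one covers by $\lceil \text{osc}/|I_w|\rceil$ squares). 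Summing the $s$-dimensional contributions, $\sum_w \big(\text{osc}(I_w)/|I_w|+1\big)|I_w|^s$, and using $\text{osc}(I_w)\lesssim (\mu(I_w))^H$ up to logarithmic factors, leads to a sum dominated by $\sum_w (\mu(I_w))^H |I_w|^{s-1}$. Comparing this with the definition of $\tau_\mu$ (and the companion $L^q$-type exponent for $|I_w|$ — which for self-similar sets with the open set condition is just the dimension function) shows the sum stays bounded as $n\to\infty$ precisely when $s-1 \ge -\tau_\mu(H)$, i.e. $s\ge 1-\tau_\mu(H)$. Taking $s$ down to this threshold and sending the logarithmic correction to $1$ (via letting $\e\to 0$ along the Borel–Cantelli exponents) yields $\dim_H\mathcal{G}(X^H,[0,1])\le 1-\tau_\mu(H)$ almost surely.

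The main obstacle I anticipate is the careful matching of the \emph{local uniform} H\"older indices of $V$ with the multifractal data of $\mu$: the H\"older indices involve a supremum over \emph{all} pairs in a shrinking neighborhood, which is strictly stronger than a pointwise local-dimension statement, and a generic self-similar measure will fail to have uniform behavior at most points. The resolution is to not work with a single typical point but to extract, for each target exponent $\alpha$, a genuine subset $E_\alpha$ of the correct Hausdorff dimension on which the comparability $\mu(B(t,r))\asymp r^\alpha$ holds \emph{with uniform constants} — this is where the convex open set condition is essential, since it makes the cylinder structure regular enough (bounded overlap, comparable ratios) that one can pass from cylinder estimates to ball estimates uniformly. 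A secondary technical point is handling the logarithmic factors in the oscillation bounds for fBM in the upper-bound covering argument; these are standard (Dudley's entropy bound or the Garsia–Rodemich–Rumsey inequality applied on each cylinder) but need to be organized so the $\sqrt{\log}$ corrections do not accumulate across the exponentially many cylinders, which is handled by choosing the Borel–Cantelli exponent slightly above the critical value and letting it tend to the critical value at the end.
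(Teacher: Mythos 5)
The upper-bound half of your proposal is fine and is essentially the paper's own argument (Lemma \ref{lemma_tau_upper_bound}): there the oscillation bound $\mbox{Osc}_X(I)\lesssim \mbox{Osc}_V(I)^{H-\varepsilon}$ is obtained pathwise from the a.s.\ H\"older continuity of $B^H$, so no Borel--Cantelli bookkeeping over cylinders is needed, but the covering count and the comparison with $\tau_\mu(H)$ are the same. The genuine gap is in your lower bound, at the word ``consequently'': uniform ball comparability $\mu(B(t,r))\asymp r^{\alpha}$ for $t\in E_\alpha$ does \emph{not} imply the two-point lower H\"older estimate $|V(t)-V(u)|=\mu([u,t])\gtrsim |t-u|^{\alpha+\varepsilon}$ for pairs $u,t\in E_\alpha$, and it is exactly this pair estimate that your energy computation needs, since the denominator in the fBM analogue of Lemma \ref{boundonexpec} is $|V(t)-V(u)|^{H}$, the $\mu$-mass of the interval \emph{between} the two points. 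A ball-centered lower bound says nothing about which side of the center the mass sits on: take $u,t$ in the attractor facing each other across a gap (or points of $E_\alpha$ whose digit expansions have a long run of the extremal digit just below the separation scale); then $\mu([u,t])$ can be zero, or far smaller than $|t-u|^{\alpha}$, even though both points satisfy $\mu(B(\cdot,r))\asymp r^{\alpha}$ with uniform constants. So the reduction from the multifractal formalism to a Taylor--Orey energy bound on $E_\alpha$ does not go through as stated; what is required is a uniform interval (cylinder-level) estimate on $E_\alpha$, which is strictly stronger than uniform local dimension.

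This is precisely the ``local versus locally uniform'' obstruction the authors themselves point out in Section 10, remark (3), and it is why the level-set route is left there as a conjecture for general $V$. The paper's actual proof of Theorem \ref{theorem_multifractal} avoids level sets entirely: it lifts the auxiliary self-similar measure $\nu$ with weights $q_i=p_i^{H}r_i^{-\tau_V(H)}$ from (\ref{eq_q}) to the graph and bounds its expected $s$-energy by decomposing $K\times K$ into products of cylinders according to the generation at which the two coordinates separate, using the exact self-similarity of $\nu\times\nu$; adjacent cylinders are then handled by the recursive ``rightmost piece'' decomposition of Lemma \ref{lemma_finite_integral}, which produces the required lower bounds on both $|t-u|$ and $|V(t)-V(u)|$ through an intermediate cylinder lying between the two points, rather than through any pointwise H\"older index. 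If you wanted to rescue your route you would have to construct $E_\alpha$ with uniform control of whole digit blocks, so that every interval with endpoints in $E_\alpha$ contains a cylinder of comparable length and comparable measure --- which in effect reproduces the paper's cylinder bookkeeping. Your optimization step (maximizing $\tau_\mu^{*}(\alpha)+1-H\alpha$ at $\alpha=\tau_\mu'(H)$, using differentiability of $\tau_\mu$ under the open set condition) is correct, so the uniform interval estimate is the one missing ingredient.
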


  \medskip

  We notice that the upper bound by $1- \tau_{\mu}\left(H\right)$ is indeed true without any further assumptions. The challenge is to establish the lower bound. For any Borel probability measures $\mu$, we know that  $\tau_{\mu}(1) = 0$ and    $\tau_{\mu}(0) = -\mbox{$\overline{\rm dim}$}_B (\mbox{supp}(\mu))$, which is the upper box dimension of $\mbox{supp}(\mu)$. As $\tau_{\mu}$ is increasing, we have
 $$
  1 < \mbox{\rm dim}_H \ {\mathcal G}(X^H, [0,1]) <1+ \mbox{$\overline{\rm dim}$}_B (\mbox{supp}(\mu)).
  $$
  
   \medskip
  
  The scenario of Theorem \ref{theorem_multifractal} is a complete opposite to Theorem \ref{hausmainresult} in the sense that  $V$ is indeed constant almost everywhere. A representing example will be $V$ being the famous Cantor devil staircase function and we call such process $B_{V(t)}$ the {\it Brownian Staircase}. Figure 1 shows some of  their sample paths. Usual estimation in Theorem \ref{hausmainresult} appears to fail without additional modifications. To prove Theorem \ref{theorem_multifractal}, we will need to carefully exploit  the self-similar property of the $V$ function. 
  
   \medskip
   
  It is well-known that the $L^q$-spectrum for the standard middle-third Cantor measure is given by 
  $$
  \tau_{\mu}(q) = \frac{\log 2}{\log 3}(q-1), 
  $$
We can easily obtain the Hausdorff dimension of the graph is almost surely $1+\frac{1}{2}\cdot \frac{\log 2}{\log 3}$. The following theorem further computed its Fourier dimension. 
\medskip

\begin{theorem} \label{thm_BStair}
Let $V$ be the middle-third Cantor devil staircase function. Then almost surely the graph of the Brownian staircase $X_t = B_{V(t)}$ has Hausdorff dimension $1+ \frac{\log2}{2\log 3} \sim 1.3155....$.
Moreover, 
$$
\mbox{\rm dim}_F \  {\mathcal G}(X, [0,1]) =0.
$$
 \end{theorem}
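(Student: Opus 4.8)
The Hausdorff dimension statement is immediate: by Theorem \ref{theorem_multifractal} with $H=1/2$ we get $\dim_H\,\mathcal{G}(X,[0,1]) = 1 - \tau_\mu(1/2)$, and plugging in the known $L^q$ spectrum $\tau_\mu(q)=\tfrac{\log 2}{\log 3}(q-1)$ of the middle-third Cantor measure gives $1-\tau_\mu(1/2)=1+\tfrac{\log 2}{2\log 3}$. So the entire content is the vanishing of the Fourier dimension. The plan is to show that \emph{no} finite nonzero Borel measure $\nu$ supported on the graph $\mathcal{G}(X,[0,1])$ can be Rajchman, in fact that $\widehat{\nu}(\xi)$ does not even tend to $0$; then $\dim_F = 0$ follows from the definition. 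The key structural fact I would exploit is that, because $V$ is the Cantor staircase, $V$ is \emph{constant} on every complementary interval of the Cantor set $C$; consequently the graph $\mathcal{G}(X,[0,1])$ contains a countable union of genuine horizontal line segments — one segment $\{(t, B_{V(I)}) : t\in \overline{I}\}$ over each complementary interval $I=(a,b)$ of $C$, sitting at height $B_{c}$ where $c=V(a)=V(b)$ is the common $V$-value. These horizontal segments carry ``most'' of the base, since $C$ has Lebesgue measure zero: the projection of the graph to the $t$-axis is all of $[0,1]$, but the part lying over $C$ projects to a null set.

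The mechanism for killing Fourier decay is the following one-dimensional principle, which I would isolate as a lemma: if a finite measure $\nu$ on $\R^2$ has the property that its pushforward $\pi_*\nu$ under the projection $\pi(t,x)=t$ gives positive mass to some horizontal segment $L\subset \mathrm{supp}(\nu)$ of positive length — equivalently, $\nu$ restricted to a horizontal line $\{x = h\}$ is nonzero — then testing $\widehat{\nu}$ along the purely horizontal frequency direction $\xi = (\eta, 0)$ one finds $\widehat{\nu}(\eta,0) = \int e^{-2\pi i \eta t}\,d(\pi_*\nu)(t)$, and since $\pi_*\nu$ has an atomless-plus-a.c.\ part together with the contributions from the segments, one must argue more carefully. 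A cleaner route: decompose $\nu = \nu_C + \nu_{C^c}$ where $\nu_C$ is the part of $\nu$ over $C$ and $\nu_{C^c}$ the part over the complementary intervals. Since $\mathrm{Leb}(C)=0$, if $\nu_{C^c}$ is nonzero then it is supported on countably many horizontal segments; pick one segment $L$ over $I$ with $\nu(L)>0$, restrict to it, and note that on $L$ the graph is literally the horizontal segment, so $\nu|_L$ is (the image of) a measure on an interval at a fixed height. For such a piece, $\widehat{\nu|_L}(\eta,0)$ is, up to the fixed phase $e^{-2\pi i \cdot 0 \cdot h}=1$ in the vertical variable, the one-dimensional Fourier transform of a finite measure on an interval, which — if it has an absolutely continuous component, or just by the fact that one cannot have a finite measure on $\R$ whose Fourier transform vanishes at infinity while another chunk doesn't — need not decay. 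To make this rigorous I would instead invoke the following: a finite measure supported on a \emph{single} horizontal line in $\R^2$ can never be Rajchman unless it is zero is \emph{false} (Lebesgue measure on a segment is Rajchman), so the real argument must use the \emph{vertical} structure: over $C$, the process $B_{V(t)}$ is a nonconstant random function, and the graph over $C$ is a Salem-type null set. Hence I would argue: if $\nu$ were Rajchman, then both $\nu_C$ and $\nu_{C^c}$ inherit decay on suitable cones; but the piece $\nu_{C^c}$, living on countably many horizontal segments, forces decay of $\sum_I \widehat{(\nu|_{L_I})}$ which — because the heights $B_{c}$ are pairwise distinct (a.s., since $B$ is injective on a countable set of times a.s., or at worst one uses that they are a.s.\ dense and nonconstant) — behaves like a lacunary-type sum that is bounded below along a sequence of frequencies, contradiction.

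Here is the obstacle, and where I expect the real work to be: the naive ``there is a horizontal segment in the support'' is \emph{not} by itself an obstruction to positive Fourier dimension, because $\dim_F$ only requires \emph{some} measure on the set to decay, and one is free to put \emph{no mass} on the segments — indeed Theorem \ref{thm_BStair} has to genuinely use that the part of the graph over $C$ is too thin (Lebesgue-null base) to support a Rajchman measure on its own. So the heart of the proof is: \textbf{(i)} show that any finite measure on $\mathcal{G}(X,[0,1])$ decomposes as (measure over $C$) $+$ (measure over $C^c$), \textbf{(ii)} show the over-$C$ part $\nu_C$ projects to a measure on the Lebesgue-null set $C$, and for \emph{this} part one can directly estimate $\widehat{\nu_C}$ along horizontal frequencies using the self-similar $3$-adic structure of $C$ and the Brownian increments, showing $|\widehat{\nu_C}(\eta,0)|$ does not go to $0$ along $\eta = 3^n$ (scaling exact-overlap / Riesz-product obstruction, exactly as one shows the Cantor measure itself is not Rajchman), and \textbf{(iii)} show the over-$C^c$ part, being supported on horizontal segments, cannot compensate: along the same sequence $\eta=3^n$ its transform $\widehat{\nu_{C^c}}(3^n,0)=\sum_I e^{-2\pi i 3^n (\text{phase})}\widehat{\lambda_I}(3^n)$ is $o(1)$ by dominated convergence on each segment plus tail control — or alternatively observe that $\pi_*\nu_{C^c}$ is absolutely continuous (it is a sum of scalar multiples of Lebesgue on the intervals $I$), hence its Fourier transform \emph{does} decay — so the non-decay of $\widehat{\nu_C}$ is not cancelled, giving $\limsup_{n}|\widehat{\nu}(3^n,0)|>0$ whenever $\nu_C\ne 0$; and if $\nu_C = 0$ then $\nu$ lives on countably many horizontal segments, the base projection is a.c., and one pushes the contradiction to the \emph{vertical} direction using that the heights $\{B_c : I \text{ complementary}\}$ form a bounded set that is \emph{not} discrete, so a mass on infinitely many distinct horizontal lines cannot have $\widehat{\nu}(0,\zeta)\to 0$. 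Combining, $\dim_F\,\mathcal{G}(X,[0,1]) = 0$ a.s. The most delicate point is step (ii): quantifying that the Brownian time-change through the Cantor staircase preserves enough of the self-similar obstruction of $\mu$ to Rajchman-ness — this is where I would use the convex open set condition, the exact $3$-adic renormalization $V(x/3)=V(x)/2$, and the Brownian scaling $B_{at}\overset{d}{=}\sqrt{a}\,B_t$ to set up a self-similar lower bound $|\widehat{\nu_C}(3^n,0)| \gtrsim |\widehat{\nu_C}(3^{n-1},0)| - (\text{small})$ that prevents decay.
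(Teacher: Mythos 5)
Your Hausdorff-dimension step is fine and matches the paper (Theorem \ref{theorem_multifractal} with $H=1/2$ plus the known $L^q$ spectrum of the Cantor measure), and you correctly identify the real difficulty: Fourier dimension is not countably stable, so one cannot simply note that each piece of the graph has Fourier dimension zero. But your proposed execution for $\dim_F=0$ has concrete gaps. First, the claim that $\pi_*\nu_{C^c}$ is absolutely continuous ("a sum of scalar multiples of Lebesgue on the intervals $I$") is false: a measure supported on the graph over a complementary interval is an arbitrary finite measure on a horizontal segment (it could be a single atom), so its horizontal projection need not decay, and your cancellation argument along $\xi=(3^n,0)$ collapses. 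Second, for the part $\nu_C$ over the Cantor set you propose a self-similar renormalization lower bound "exactly as one shows the Cantor measure itself is not Rajchman"; but $\nu_C$ (and its projection $\pi_*\nu_C$) is an \emph{arbitrary} measure on $C$ with no self-similarity, so the Riesz-product/renormalization mechanism does not apply. The true fact you need — that \emph{no} nonzero measure carried by the middle-third Cantor set has decaying Fourier transform — requires a different input (the $H$-set property of $C$, or the normality argument of Davenport--Erd\H{o}s--LeVeque as the paper uses in Lemma \ref{lemmaFdimgraph}). Third, your treatment of the case $\nu_C=0$ ("the heights form a bounded, non-discrete set, so no decay") is not an argument; the correct and much simpler tool is Wiener's theorem: if $\nu$ charges any horizontal segment, its pushforward under the vertical projection has an atom, hence $\widehat\nu(0,\xi_2)$ cannot tend to $0$ — and this observation also disposes of the mixed case at once, making your whole horizontal-frequency cancellation analysis unnecessary.

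For comparison, the paper does not attempt to rule out Rajchman measures on the full graph. It decomposes $\mathcal{G}(X,[0,1])$ into $\mathcal{G}(X,K)$ and the graphs over the complementary intervals, proves in Lemma \ref{lemmaFdimgraph} that $\mathcal{G}(X,K)$ has Fourier dimension \emph{and modified Fourier dimension} zero (every measure on this piece projects to a measure on $K$, and one tests horizontal frequencies, using that measures charging $K$ cannot have polynomial decay because points of $K$ are non-normal in base $3$), and then invokes the Ekstr\"{o}m--Persson--Schmeling criterion (Theorem \ref{thm_EPS}) to obtain countable stability of the Fourier dimension for this particular decomposition. If you want to salvage your stronger claim (no Rajchman measure at all on the graph), the repaired argument is: decompose $\nu=\nu_C+\nu_{C^c}$; if $\nu_{C^c}\neq 0$ apply Wiener's theorem in the vertical direction; if $\nu_{C^c}=0$ apply the Cantor-set non-Rajchman fact in the horizontal direction. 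As written, however, your proposal does not establish either of these two pillars, so the proof is incomplete.
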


\begin{figure}[h]
\begin{center}
\includegraphics[width=10cm]{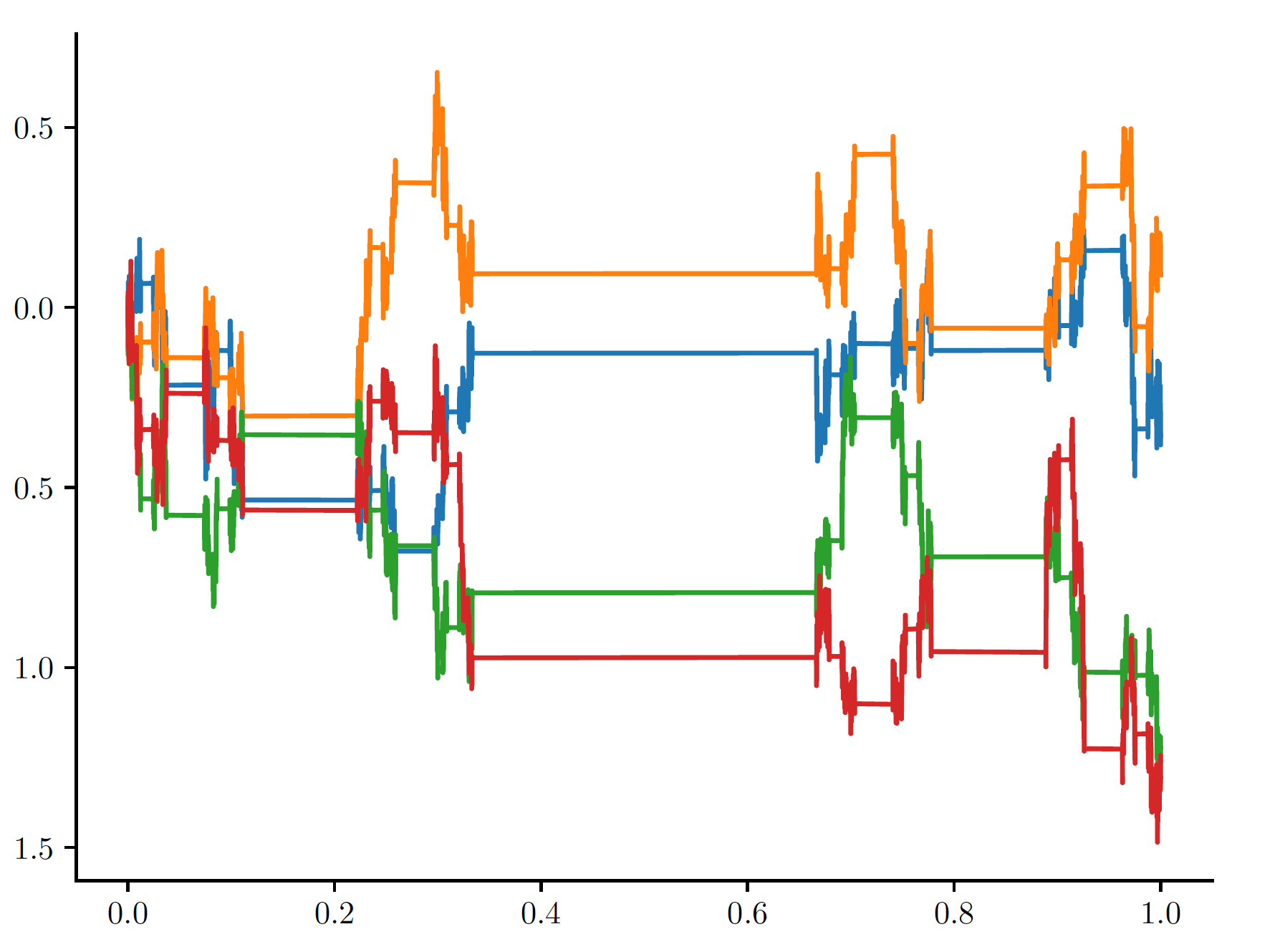}
\end{center}
\caption{Four sample paths of Brownian Staircase}
\end{figure}

It has been widely known that randomization by Brownian motion will likely produce Salem sets or at least measures with polynomial Fourier decay. However,  our results on the Fourier dimension of the Brownian staircase demonstrates that this is not always the case. On the other hand, we will demonstrate another singularly continuous, strictly increasing function $V$ for which $B_{V(t)}$ does have Fourier decay and positive Fourier dimension (See Example \ref{example6.3}).

\medskip

\noindent {\bf Organization of our paper.} Our paper is organized as follows. It can be regarded as  four separate  parts.

\medskip
Part 1 (Section 2-3): we will provide the preliminaries of additive processes and the local H\"{o}lder indices. Then we prove the more elementary results on  Hausdorff dimensions.

\medskip

Part 2 (Section 4-6): We provide some necessary tools to prove our main result for Fourier  decay in section 4  and then in Section 5, Theorem \ref{fouriermainresult} will be proved.
In Section 6, we will illustrate our results with examples and prove Corollary \ref{corollaryhaus}. 

\medskip

Part 3 (Section 7-9): We will  focus on almost constant increasing functions. In Section 7, we introduce the definition of $L^q$ spectrum for general continuous functions and then set up  the terminolgies for  self-similar measures. In Section 8, Theorem \ref{theorem_multifractal}  will be proved. In Section 9, we will shoow that the Brownian Staircase has Fourier dimension zero. 

\medskip

Part 4 (Section 10): We will discuss open questions, remarks  and conjecture.

\medskip

\noindent {\bf Notation of our paper.} We will write $A\lesssim B$ if there exists a constant $C>0$ such that $A\le C B$ where $C$ is independent of any parameters that define the quantites $A$ and $B$. dim$_H$ will denote Hausdorff dimension and dim$_F$ will denote the Fourier dimension. 
 
 \section{Preliminaries} 
  Throughout the paper, $N(\mu,\sigma^2)$ means normal distribution with mean $\mu$ and variance $\sigma^2$. We recall that a stochastic process $(X_t)_{t>0}$ defined on a probability space $(\Omega, {\mathcal F},{\mathbb P})$ has {\it independent increments} if for all $0<t_1<...<t_n$, the random variables $$X_{t_1}-X_0,X_{t_2}-X_{t_1},...,X_{t_n}-X_{t_{n-1}}$$ are independent. Unless otherwise specified, $(\mathcal{F}_t)$  denotes the natural filtration generated by $X_s, s\le t$. A martingale $X_t$ is an integrable stochastic process with 
 $$
 \E[X_t|{\mathcal F}_u] = X_u
 $$
 for all $0\le u<t$. 
 
 \medskip
 
 \subsection{Additive processes} In this paper, $J = [0,S]$, $0<S\le\infty$, denotes the time-interval for which the process is defined.
\begin{definition}  A stochastic process $(X_t)_{t\in J}$ is called an {\it additive process} if $X_t$ has independent increments,  $X_0 = 0$ almost surely, and $X_t$ is continuous in probability  i.e. for all $\epsilon>0$ and $t>0$, 
$$
\lim_{s\to t} {\mathbb P} ( |X_s-X_t|\ge \epsilon) = 0.
$$
 The process is said to be a {\it continuous additive process} if additionally the sample paths of X are continuous almost surely. We say that the additive process is {\it centered} if ${\mathbb E}[X_t] = 0$ for all $t>0$. 
 \end{definition}
 
The definition was taken from the book of  It\^{o} \cite{ito2013stochastic}\footnote{In It\^{o}'s book, the process was called L\'{e}vy process and the usual L\'{e}vy process was called the homogeneous L\'{e}vy process}. It\^{o} actually obtained a much more general decomposition in which continuity in probability is not assumed. However, such process may not even have a finite expectation. Assuming continuity in probability, a theorem of Doob allows one to find a C\`{a}dl\`{a}g modification of the process. Moreover, a remarkable result due to It\^{o} ( See \cite[Section 1.4, Theorem 1]{ito2013stochastic} ) about continuous additive processes is the following.

\medskip

\begin{theorem}[It\^{o}]\label{ito}
Let $(X_t)_{t\in J}$ be a continuous additive process. Then, for any $t,u\in J$
\begin{equation}
X_t - X_u \sim N\Big(m(t,u), V(t,u)\Big)\;.
\end{equation}
where $m, V: J\times J \rightarrow \R$ and $V$ is a non-negative function. 
\end{theorem}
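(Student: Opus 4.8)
The plan is to deduce the result from the general structure theory of continuous additive processes, working one increment at a time. Fix $u \le t$ in $J$ and set $Y = X_t - X_u$; I want to show $Y$ is Gaussian. The key observation is that by independence of increments we can subdivide $[u,t]$ into $n$ equal pieces $u = s_0 < s_1 < \dots < s_n = t$ and write $Y = \sum_{k=1}^n (X_{s_k} - X_{s_{k-1}})$ as a sum of independent random variables. Because $X$ is continuous in probability (indeed, has continuous sample paths), this triangular array is a null array: $\max_k \mathbb{P}(|X_{s_k} - X_{s_{k-1}}| \ge \epsilon) \to 0$ as $n \to \infty$, by uniform continuity of the sample paths on the compact interval $[u,t]$. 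So $Y$ is an infinitely divisible random variable arising as the limit of row sums of a null array with no jumps.

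The main step is then to rule out a nontrivial Lévy (jump) part and to control the Gaussian part. First I would argue that the Lévy measure in the Lévy--Khintchine representation of $Y$ must vanish: if it did not, a classical argument (the jumps of the limiting process correspond to atoms of a Poisson random measure built from the null array, cf.\ the Lévy--It\^o decomposition) would force $X$ to have a genuine jump on $[u,t]$ with positive probability, contradicting sample-path continuity. This is the part I expect to be the main obstacle — making precise that continuity of sample paths of the whole process $X$, not merely continuity in probability, kills the jump component; one clean route is to invoke It\^o's decomposition theorem for additive processes (quoted in the paragraph preceding the statement) directly and observe that the continuous-sample-path hypothesis forces the Poisson part to be absent, leaving only a deterministic drift plus a Gaussian martingale part. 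Once the jump part is gone, $Y$ is infinitely divisible with zero Lévy measure, hence Gaussian: $Y \sim N(m(t,u), V(t,u))$ for some real $m(t,u)$ and some $V(t,u) \ge 0$ (variances are nonnegative, and $V(t,u) = 0$ is allowed, corresponding to the degenerate/deterministic case).

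Finally I would record the two functions: define $m(t,u) = \E[X_t - X_u]$ and $V(t,u) = \mathrm{Var}(X_t - X_u) = \E[(X_t - X_u - m(t,u))^2]$, both finite because a Gaussian has moments of all orders. Nonnegativity of $V$ is immediate. This gives the maps $m, V : J \times J \to \R$ with $V \ge 0$ as claimed. (One could also note in passing the additivity relations $m(t,u) = m(t,s) + m(s,u)$ and $V(t,u) = V(t,s) + V(s,u)$ for $u \le s \le t$, which follow from independence of increments and will be used later to write $V(t,u) = V(t) - V(u)$ for an increasing function $V$, but these are not needed for the statement itself.) The only genuine content is the Gaussianity, and everything reduces to: a continuous-in-probability process with independent increments and continuous paths has Gaussian increments, which is exactly the cited theorem of It\^o specialized to the continuous case.
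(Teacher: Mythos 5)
The paper does not actually prove this statement: it quotes it from It\^o's book (Section 1.4, Theorem 1 of \cite{ito2013stochastic}), so your fallback option --- ``invoke It\^o's decomposition theorem directly'' --- is exactly what the paper does, and at that level there is nothing to compare. Judged as a self-contained argument, however, your sketch has a genuine gap at precisely the step you yourself flag as the main obstacle, and the form in which you state the null-array property is too weak to close it. The condition $\max_k \mathbb{P}(|X_{s_k}-X_{s_{k-1}}|\ge\epsilon)\to 0$ (uniform asymptotic negligibility) already follows from continuity in probability alone and yields only that $Y=X_t-X_u$ is infinitely divisible; it does not rule out a jump part. Your subsequent appeal to ``atoms of a Poisson random measure built from the null array'' is a heuristic, not an argument, and the appeal to It\^o's decomposition is a reduction to the cited theorem rather than a proof of it.

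The gap can be closed by an elementary classical argument, and this is exactly where almost sure continuity of the paths (as opposed to mere continuity in probability) enters. Uniform continuity of the sample paths on $[u,t]$ gives $\max_k |X_{s_k}-X_{s_{k-1}}|\to 0$ almost surely, hence in probability, as the mesh tends to $0$. For independent summands, writing $p_{n,k}=\mathbb{P}(|X_{s_k}-X_{s_{k-1}}|>\epsilon)$, one has $\mathbb{P}\bigl(\max_k|X_{s_k}-X_{s_{k-1}}|>\epsilon\bigr)=1-\prod_k(1-p_{n,k})$, and $\prod_k(1-p_{n,k})\to 1$ forces $\sum_k p_{n,k}\to 0$, since $-\log(1-p)\ge p$. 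The condition $\sum_k \mathbb{P}(|X_{s_k}-X_{s_{k-1}}|>\epsilon)\to 0$ for every $\epsilon>0$ is precisely the classical normal convergence criterion (Gnedenko--Kolmogorov/L\'evy) under which the limit of row sums of a null triangular array of rowwise independent variables has vanishing L\'evy measure, i.e.\ is Gaussian, possibly degenerate; since here the row sums are identically $Y$, this gives $Y\sim N(m(t,u),V(t,u))$. With that step supplied, your final step --- defining $m(t,u)$ and $V(t,u)$ as the mean and variance of the increment, with $V\ge 0$ --- is fine.
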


\medskip

There is a convenient property of the variances of increments of continuous additive processes which we prove now. In the remainder of this section we let 
$$
V(t) = V(t,0) \  \mbox{and} \ m(t) = m(t,0).
$$

\medskip

\begin{proposition}\label{Varlemma}
Let $(X_t)_{t\in J}$ be a continuous additive process and let $0\leq u < t$ for $t\in J$. Then 
\begin{enumerate}
\item$V(0) = 0$,
\item $V$ is increasing on $J$,
\item $V$ is a continuous function on $J$,
\item $V(t,u)=V(t)-V(u).$
 \end{enumerate}
 If we further suppose that the process is centered, then $X$ is a martingale with respect to its natural filtration and 
$$
X_t - X_u  \ \sim  \ N(0, V(t) - V(u))
$$
 for all  $0\leq u < t$. In particular, $X_t\sim N(0,V(t))$.
\end{proposition}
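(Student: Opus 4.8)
The plan is to derive all of the assertions from It\^o's representation in Theorem \ref{ito}, namely $X_t - X_u \sim N(m(t,u), V(t,u))$ with $V \ge 0$, combined with the three defining properties of an additive process: $X_0 = 0$ a.s., independence of increments, and continuity in probability. First I would dispose of (1): since $X_0 = 0$ almost surely, applying Theorem \ref{ito} with $t = u = 0$ shows the (degenerate) Gaussian law $N(m(0,0), V(0,0))$ is the point mass at $0$, so $V(0) := V(0,0) = 0$ (and incidentally $m(0,0) = 0$).

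For (4) and (2), fix $0 \le u < t$ and write $X_t = (X_t - X_u) + (X_u - X_0)$. By the independent-increments hypothesis the two summands are independent, and both are Gaussian by Theorem \ref{ito}; since variances add over independent summands, $V(t) = V(t,u) + V(u)$, i.e. $V(t,u) = V(t) - V(u)$, which is (4). The identical computation with means records $m(t) = m(t,u) + m(u)$, which I will reuse below. Because Theorem \ref{ito} guarantees $V \ge 0$, part (4) gives $V(t) - V(u) = V(t,u) \ge 0$ whenever $u < t$, so $V$ is increasing; this is (2).

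For the continuity statement (3), the one analytic ingredient I need is the elementary fact that if $Y_n \sim N(\mu_n, \sigma_n^2)$ and $Y_n \to 0$ in probability, then $\sigma_n \to 0$ and $\mu_n \to 0$ (the density of $Y_n$ is pointwise at most $(\sqrt{2\pi}\,\sigma_n)^{-1}$, which precludes $\mathbb{P}(|Y_n| \le \varepsilon) \to 1$ unless $\sigma_n \to 0$; once $\sigma_n \to 0$, $\mu_n = Y_n - \sigma_n Z_n \to 0$ in probability and, being deterministic, tends to $0$). Now fix $t \in J$. Since $V$ is monotone by (2), the one-sided limits $V(t^-) \le V(t) \le V(t^+)$ exist. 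For $s < t$ we have $X_t - X_s \sim N(m(t,s), V(t) - V(s))$, and continuity in probability forces $X_t - X_s \to 0$ in probability as $s \uparrow t$; by the ingredient above $V(t) - V(s) \to 0$, so $V(t^-) = V(t)$, and symmetrically $V(t^+) = V(t)$. Hence $V$ is continuous, giving (3).

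Finally, in the centered case $\E[X_t] = 0$ together with $X_t \sim N(m(t), V(t))$ forces $m(t) = 0$ for every $t$, hence $m(t,u) = m(t) - m(u) = 0$ and $X_t - X_u \sim N(0, V(t) - V(u))$. Gaussian variables are integrable, and independent increments imply $X_t - X_u$ is independent of $\mathcal{F}_u$ (a routine $\pi$–$\lambda$ argument on the generators $X_{s_1}, \dots, X_{s_k}$ with $s_i \le u$), so
$$
\E[X_t \mid \mathcal{F}_u] = \E[X_t - X_u \mid \mathcal{F}_u] + X_u = \E[X_t - X_u] + X_u = X_u,
$$
which makes $X$ a martingale. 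I expect the only step needing genuine care is (3): one must use continuity in probability (not merely convergence of expectations) to control the variance parameter, and invoke monotonicity of $V$ to pass from one-sided limits to continuity; the remaining parts are bookkeeping with sums of independent Gaussians.
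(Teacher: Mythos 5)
Your argument is correct, and for items (1), (2), (4) and the martingale claim it is essentially the paper's proof in different clothing: the paper multiplies characteristic functions of the independent increments $X_u$ and $X_t-X_u$ to read off $m(t,u)=m(t)-m(u)$ and $V(t,u)=V(t)-V(u)$, which is exactly your ``means and variances add over independent summands'' step, and the martingale computation is identical. Where you genuinely diverge is the continuity of $V$. The paper exploits the \emph{almost sure} continuity of the sample paths: $e^{iX_t}\to e^{iX_u}$ a.s., dominated convergence gives convergence of $\E[e^{iX_t}]$, and continuity of $V$ is read off from $e^{-V(t)/2}$. You instead use only continuity \emph{in probability}, via the elementary anti-concentration fact that if $N(\mu_n,\sigma_n^2)\to 0$ in probability then $\sigma_n\to 0$ (and $\mu_n\to 0$); your subsequence/density bound for this is sound, and the appeal to monotone one-sided limits is harmless though not strictly needed, since $V(t)-V(s)\to 0$ as $s\uparrow t$ and $V(s)-V(t)\to 0$ as $s\downarrow t$ already give two-sided continuity. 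Your route buys two small things: it uses the weaker hypothesis (continuity in probability, which is part of the definition of an additive process, rather than a.s. path continuity), and it tracks the mean parameter explicitly, whereas the paper's continuity step writes $X_t\sim N(0,V(t))$ before centering has been assumed and so implicitly suppresses $m(t)$ (one should really argue with $|\E[e^{iX_t}]|=e^{-V(t)/2}$). The paper's route, in exchange, is a one-line dominated convergence argument once path continuity is granted. Both are complete proofs of the proposition.
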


\medskip

\begin{proof}
We first show that $V(t,u)=V(t)-V(u)$ holds.  Since $X$ has Gaussian increments, 
$$\E[e^{i\rho X_u}]=e^{i\rho m(u) - \frac{\rho^2V(u)}{2}}, \; u\in J, \; \rho\in\R,$$ and further because $X$ has independent increments,
$$e^{i\rho m(t) - \frac{\rho^2V(t)}{2}}=\E[e^{i\rho X_t}]=\E[e^{i\rho X_u}]\E[e^{i\rho(X_t-X_u)}]=e^{i\rho m(u) - \frac{\rho^2V(u)}{2}}\E[e^{i\rho(X_t-X_u)}], \quad 0\leq u<t.$$
This thus leads to
$$\E[e^{i\rho(X_t-X_u)}]=e^{i\rho(m(t)-m(u)) - \frac{\rho^2(V(t)-V(u))}{2}},$$
which shows that $X_t-X_u$ is normally distributed with mean $m(t)-m(u)$ and variance $V(t)-V(u)$. Hence,  $V(t,u) = V(t) - V(u)$ follows. 

\medskip

We now show the remaining properties of $V$ holds. Indeed, since $X_0 = 0$, we must have $V(0)=0$. Since $V(t,u)\ge 0$, $V$ must be an increasing function. To see that $V$ is a continuous function, let $u\in{\mathbb T}$ be arbitrary. Since $X$ is continuous, almost surely we have 
$$\lim_{t\rightarrow u}\, e^{iX_t} = e^{iX_u},$$
which by Lebesgue's Dominated Convergence Theorem gives 
$$\lim_{t\rightarrow u}\, \E[e^{iX_t}] = \E[e^{iX_u}].$$
Since $X_t\sim N(0, V(t))$ the equality immediately above gives 
$$\lim_{t\rightarrow u}\, e^{-\frac{V(t)}{2}} = e^{-\frac{V(u)}{2}},$$
which shows $V(t)\rightarrow V(u)$ as $t\rightarrow u$ by writing $V(t) = -2\ln (e^{-\frac{V(t)}{2}})$ and using the continuity of logarithm.

\medskip

To prove the martingale property, It follows from a direct calculation as below: for all $u<t$,
$$
\begin{aligned}
\E[X_t \, | \, \mathcal{F}_u] =&  \E[X_t - X_u \, | \, \mathcal{F}_u] + \E[X_u \, | \, \mathcal{F}_u] \\
= &  \E[X_t - X_u ] + X_u  \ (\mbox{by independent increment})\\
=&  X_u  \ (\mbox{as the process is centered}).\\
\end{aligned}
$$
\end{proof}

\medskip

\subsection{Time-changed Brownian Motion.} As we have shown, a centered continuous additive process must be a martingale, and so we can apply the Dambis-Dubin-Schwartz theorem \cite[p.174]{KSbook} to conclude that this process must be a time-changed Brownian motion. To facilitate our discussion, we will need a more precise understanding of this time change.  Given a centered continuous additive process $X_t$ with variance function $V(t)$, we define
$$
T(s) = \inf \{t\ge 0: V(t)>s\}.
$$
if $s\in V(J) = [0,V(S)]$ and $T(s) = +\infty$ if $s>V(S)$. Following immediately from this definition, one can see that $T$ is an increasing function. 
\medskip

\begin{lemma} \label{lemma T}
$T$ satisfies the following
\begin{enumerate}
\item $V(T(s)) = s$, $T(V(t)) = \sup \{s\ge t: V(s) = V(t)\}$.
\item $T$ is strictly increasing.
\item If $V$ is strictly increasing, then $T = V^{-1}$.
\end{enumerate}
\end{lemma}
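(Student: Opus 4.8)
The plan is to establish each of the three claims in Lemma \ref{lemma T} directly from the definition $T(s) = \inf\{t \ge 0 : V(t) > s\}$ together with the properties of $V$ already recorded in Proposition \ref{Varlemma}, namely that $V$ is continuous, increasing, and satisfies $V(0)=0$. The only subtlety is that $V$ need not be strictly increasing, so $V$ can have plateaus; the right-continuous generalized inverse $T$ is the standard tool for handling this, and all three items are the familiar facts about such inverses, but I would write them out carefully for the reader.

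First I would prove item (1). For the identity $V(T(s)) = s$: by continuity of $V$, the set $\{t : V(t) > s\}$ is open in $J$, so its infimum $T(s)$ is not in it, giving $V(T(s)) \le s$; on the other hand, for every $t > T(s)$ we have (by definition of infimum and monotonicity) a point $t'$ with $T(s) \le t' < t$ and $V(t') > s$, hence $V(t) \ge V(t') > s$, and letting $t \downarrow T(s)$ and using continuity again yields $V(T(s)) \ge s$. Combining, $V(T(s)) = s$. For the identity $T(V(t)) = \sup\{s \ge t : V(s) = V(t)\}$: write $s_0$ for that supremum (it is $\ge t$ and, since $V$ is continuous and increasing, $V(s_0) = V(t)$ and $V$ is constant on $[t, s_0]$). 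Then for $u > s_0$ we have $V(u) > V(t)$ by maximality of $s_0$, so $T(V(t)) \le s_0$; and for $u < s_0$ we have $V(u) = V(t) \not> V(t)$, so $u$ is not in the defining set, giving $T(V(t)) \ge s_0$. Hence equality.

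Next, item (2): $T$ is strictly increasing. Suppose $s_1 < s_2$; both lie in $[0, V(S)]$ (else $T$ is $+\infty$ on the larger, and the claim is immediate, or needs the convention handled). Using $V(T(s_i)) = s_i$ from item (1), if we had $T(s_1) \ge T(s_2)$ then monotonicity of $V$ would force $s_1 = V(T(s_1)) \ge V(T(s_2)) = s_2$, a contradiction. So $T(s_1) < T(s_2)$. Finally, item (3): if $V$ is strictly increasing, then it is a continuous strictly increasing bijection from $J$ onto $V(J)$, so $V^{-1}$ exists; and for any $t$, $T(V(t)) = \sup\{s \ge t : V(s) = V(t)\} = t$ because strict monotonicity makes the set a singleton. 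Together with $V(T(s)) = s$ this identifies $T = V^{-1}$.

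I do not expect a serious obstacle here; this is a routine lemma. The one place to be careful is the boundary/plateau behavior — making sure the continuity of $V$ is invoked correctly when arguing $V(T(s)) \le s$ (openness of the superlevel set) and $V(T(s)) \ge s$ (passing to the limit from the right), and making sure the edge cases $s = V(S)$ and $s > V(S)$ are consistent with the stated conventions. I would present items (1)–(3) in that order since (2) and (3) both lean on the first identity in (1).
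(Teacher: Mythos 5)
Your argument is correct, and parts (2) and (3) are essentially the paper's own proof: the paper also deduces strict monotonicity of $T$ from $V(T(s))=s$ (ruling out $T(s_1)=T(s_2)$, which suffices since $T$ is already known to be increasing), and obtains $T=V^{-1}$ from $T(V(t))=t$ in the strictly increasing case. The genuine difference is in part (1): the paper does not prove it at all but cites Karatzas--Shreve (pp.~174, 231) for the standard properties of the right-continuous generalized inverse, whereas you derive both identities directly from the definition $T(s)=\inf\{t\ge 0: V(t)>s\}$ using continuity and monotonicity of $V$. Your route makes the lemma self-contained, at the modest cost of some bookkeeping that the citation avoids; your sketch of that bookkeeping is sound. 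Two small points to watch if you write it out: the claim ``the superlevel set is open, so its infimum is not in it'' needs the extra observation that the left endpoint $0$ of $J$ is excluded (which holds because $V(0)=0\le s$), since a relatively open subset of $J=[0,S]$ can contain its infimum only at that endpoint; and the identity $T(V(t))=\sup\{s\ge t: V(s)=V(t)\}$ degenerates when $V(t)=V(S)$ (the defining set for $T$ is then empty), so the convention at $s=V(S)$ must be fixed as you note --- the paper glosses over this as well, and it plays no role in the subsequent use of the lemma.
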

\medskip

\begin{proof}
(i) can be found in \cite[p.174 and 231]{KSbook}. To see (ii). Suppose that $s_1<s_2$ but $T(s_1) = T(s_2)$ . Using (i), $s_1 = V(T(s_1)) = V(T(s_2)) = s_2$ which is a contradiction. Hence, $T$ is strictly increasing. Using (i), if $V$ is strictly increasing, $T(V(t)) = t$. Hence, $T= V^{-1}$.
\end{proof}

\medskip

We end this section by stating the Dambis-Dubin-Schwartz theorem applied to the centered continuous additive process. 

\begin{theorem}\label{dds}
Let $(X_t)_{t\in J}$ be a centered continuous additive process. Then $B_t = X_{T(t)}$ defines a Brownian motion on $[0,V(S)]$ with $B_0 = 0$. Moreover, $X_t = B_{V(t)}$. 
\end{theorem}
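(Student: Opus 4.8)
The plan is to check directly — using only the Gaussian structure of $X$ established in Proposition \ref{Varlemma} and the elementary properties of $T$ in Lemma \ref{lemma T} — that $B_t := X_{T(t)}$ has the three defining properties of Brownian motion on $[0,V(S)]$: $B_0=0$, continuous sample paths, and stationary independent increments with $B_t-B_s\sim N(0,t-s)$ for $s<t$. This avoids appealing to the general Dambis--Dubin--Schwartz theorem. The distributional part is immediate: since $T$ is deterministic and strictly increasing (Lemma \ref{lemma T}(ii)), for $s<t$ we get $B_t-B_s=X_{T(t)}-X_{T(s)}\sim N\big(0,V(T(t))-V(T(s))\big)=N(0,t-s)$ by Proposition \ref{Varlemma} together with $V(T(u))=u$ from Lemma \ref{lemma T}(i); and for $0\le s_0<\dots<s_n$ the points $T(s_0)<\dots<T(s_n)$ are strictly increasing, so the increments $B_{s_i}-B_{s_{i-1}}=X_{T(s_i)}-X_{T(s_{i-1})}$ are independent, being increments of $X$ over disjoint adjacent intervals.

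Next I would dispose of $B_0=0$ and, more importantly, of sample-path continuity — this is where the flat pieces of $V$ (equivalently the jumps of $T$) must be handled. The key observation is that if $V$ is constant on a non-degenerate interval $[a,b]$, then $X_t-X_a\sim N(0,0)$ for every $t\in[a,b]$, so $X_t=X_a$ almost surely for each such $t$; testing on $[a,b]\cap\Q$ and using path continuity, $X$ is almost surely constant on $[a,b]$. Since a monotone function has at most countably many non-degenerate flat intervals, a single full-measure event $\Omega_0$ makes $X$ constant on all of them simultaneously. In particular $B_0=X_{T(0)}=X_0=0$ on $\Omega_0$, because $V\equiv0$ on $[0,T(0)]$. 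For continuity, the strict inequality in $T(s)=\inf\{t:V(t)>s\}$ makes $T$ right-continuous, hence $B=X\circ T$ is right-continuous; and at any left-discontinuity $s$ of $T$ I would show $V(T(s^-))=V(T(s))=s$ by choosing $u_n\uparrow s$ with $V(T(u_n))=u_n$ and invoking continuity of $V$, so $V$ is constant on $[T(s^-),T(s)]$ and therefore $B_{s^-}=X_{T(s^-)}=X_{T(s)}=B_s$ on $\Omega_0$. Thus $B$ is continuous, and a continuous process starting at $0$ with stationary independent Gaussian increments is a Brownian motion.

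Finally, for the identity $X_t=B_{V(t)}$: by Lemma \ref{lemma T}(i), $T(V(t))=\sup\{s\ge t:V(s)=V(t)\}$ is the right endpoint of the (possibly degenerate) maximal level interval $\{V=V(t)\}$ through $t$, so $B_{V(t)}=X_{T(V(t))}$. If this interval is degenerate then $T(V(t))=t$ and there is nothing to prove; otherwise it is one of the countably many flat intervals on which $X$ is constant on $\Omega_0$, so again $X_t=X_{T(V(t))}=B_{V(t)}$. Hence $X_t=B_{V(t)}$ for all $t$ on $\Omega_0$.

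I expect the only real obstacle to be the bookkeeping around flat pieces of $V$: verifying $V(T(s^-))=s$, establishing that $X$ is almost surely constant (simultaneously in $t$) on each maximal flat interval, and arranging that continuity of $B$ and the identity $X_t=B_{V(t)}$ both hold on one common full-measure set. The Gaussian and independence computations, by contrast, are essentially free given Proposition \ref{Varlemma} and Lemma \ref{lemma T}.
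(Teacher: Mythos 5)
Your proof is correct and follows essentially the same route as the paper: direct verification using Proposition \ref{Varlemma} and Lemma \ref{lemma T}, with the $N(0,0)$-increment-plus-path-continuity argument to handle flat intervals of $V$, and the identity $T(V(t))=\sup\{s\ge t: V(s)=V(t)\}$ for the final step $X_t=B_{V(t)}$. The only difference is that you additionally verify sample-path continuity of $B$ across the jumps of $T$ and organize the flat-interval arguments on a single full-measure event, points the paper's proof leaves implicit.
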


\begin{proof}
Note that $T(0) = \sup \{s: V(s) = 0\}$. If $T(0) = 0$, then $B_0 = 0$ follows. Otherwise, let $s_0 = \sup \{s: V(s) = 0\}$. Then for all $n\in\N$, $V(s_0-1/n) = 0.$ This means that $X_{s_0-1/n} \sim N(0, V(s_0-1/n)) = N(0,0)$ and thus $X_{s_0-1/n} = 0$ almost surely.  Hence, by the continuity of the process, $B_0 = X_{T(0)} = 0.$

\medskip

 If $t_1<t_2<...<t_n$, then $T(t_1)<...<T(t_n)$ as $T$ is strictly increasing. The independent increments property of $X$ immediately shows that $B_t$ also possess independent increments. Finally, $B_t \sim N(0, V(T(t))) = N(0,t)$ by Lemma \ref{lemma T}(i). This shows $B_t$ is a Brownian motion. Using the second part of Lemma \ref{lemma T}(i) we obtain
$$
B_{V(t)} = X_{T(V(t))} = X_{\sup\{s\ge t: V(s) = V(t)\}}.
$$
Again, by the continuity of the sample path of $X$, $X_{\sup\{s\ge t: V(s) = V(t)\}} = X_t$. The proof is complete. 
\end{proof}

\medskip

\subsection{Local H\"{o}lder indices} We now fix a non-empty closed and bounded interval $J$. To calculate the Hausdorff dimension of ${\mathcal G}(X,J)$.  We first let $I(t,\delta)$ to be the open interval of length $\delta$ centered at $t$.  Let $t_0\in J$.  We say that $t_0$ is a {\it non-constant point} of $V$ if for all $\delta>0$, there exists $t\ne t_0$ and $t\in I(t_0,\delta)$ such that $V(t)\ne V(t_0)$. Otherwise, $t_0$ is called a {\it constant point} of $V$. We will denote the set of all non-constant points of $V$ in $J$ by $E_V(J)$. If $J$ is fixed, we will simply write $E_V$. 
\medskip

\begin{lemma}
$E_V$ is a compact set in $\R$. 
\end{lemma}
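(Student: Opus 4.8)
The plan is to show $E_V$ is closed and bounded in $\R$; boundedness is immediate since $E_V\subseteq J$ and $J$ is a closed bounded interval, so the whole point is closedness, which I will obtain by showing the complement (relative to $J$, or better, the set of constant points together with points outside $J$) is open, or equivalently that the set of constant points is relatively open in $J$.

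First I would unwind the definition: $t_0$ is a constant point of $V$ precisely when there exists $\delta>0$ such that $V(t)=V(t_0)$ for all $t\in I(t_0,\delta)\cap J$; in other words, $V$ is locally constant near $t_0$. Call the set of constant points $C_V$, so $E_V = J\setminus C_V$. The key observation is that if $t_0\in C_V$, witnessed by some $\delta>0$ on which $V$ is constant, then every point $t_1\in I(t_0,\delta/2)\cap J$ is also a constant point: indeed $I(t_1,\delta/2)\subseteq I(t_0,\delta)$, so $V$ is constant on $I(t_1,\delta/2)\cap J$ with value $V(t_0)=V(t_1)$. Hence $C_V$ is relatively open in $J$, so $E_V = J\setminus C_V$ is relatively closed in $J$, and since $J$ itself is closed in $\R$, $E_V$ is closed in $\R$. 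Combined with $E_V\subseteq J$ bounded, $E_V$ is compact.

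An alternative, essentially equivalent route would be to argue sequentially: take $t_n\in E_V$ with $t_n\to t_0$, and show $t_0\in E_V$. If $t_0$ were a constant point, $V$ would be constant on some $I(t_0,\delta)\cap J$; but then for $n$ large, $t_n$ lies well inside this interval, forcing $t_n$ to be a constant point too (same argument as above with the nested interval), contradicting $t_n\in E_V$. Either presentation is short; I would use the open-complement version since it is cleanest.

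I do not anticipate a genuine obstacle here — the statement is essentially a topological triviality once the definition of constant point is correctly read as ``$V$ is locally constant at $t_0$.'' The only mild care needed is the bookkeeping with intervals intersected with $J$ near the endpoints of $J$ (so that $I(t,\delta)\cap J$ is used consistently rather than $I(t,\delta)$), and noting that the value of $V$ on the locally-constant neighborhood of $t_0$ must equal $V(t_0)$ itself, which is what lets the ``constant point'' property propagate to nearby points.
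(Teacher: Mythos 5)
Your proof is correct and follows essentially the same route as the paper: show that the set of constant points is (relatively) open, so that $E_V$ is closed in the compact interval $J$, hence compact. Your version is in fact slightly more careful about the relative topology at the endpoints of $J$ and about which neighborhood witnesses the constancy at nearby points, but the underlying argument is identical.
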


\begin{proof}
It suffices to prove that $J\setminus E_V$ is open. Indeed, if $t\in [0,1]\setminus E_V$, then there exists $\delta>0$ such that $V$ is constant on $I(t_0,\delta)$ and every point in  $I(t_0,\delta)$ are also constant points, which means that $I(t_0,\delta)$ is a subset of $[0,1]\setminus E_V$. This shows that the set of all non-constant points is open. The proof is complete. 
\end{proof}

We note that $J\setminus E_V$ is open in $\R$. We can write 
\begin{equation}\label{decomposition}
J\setminus E_V= \bigcup_{i=1}^{\infty} J_i
\end{equation}
 as a countable disjoint union of open intervals. We can without loss of generality assume that in each open interval $J_i$,  $V$ must be a constant function. Otherwise, there will be a non-constant point inside $J_i$ and we can further decompose $J_i$ into intervals that $V$ is a constant. Moreover, the end-point of each $J_i$ must be a non-constant point. Otherwise, we can just enlarge the open interval.  
 As 
$$
{\mathcal G}(X,J)= {\mathcal G}(X, E_V) \cup {\mathcal G}(X,J\setminus E_V),
$$
by the countable stability of Hausdorff dimension and $X$ is constant of each $J_i$, we have also the following simple lemma. 

\begin{lemma}\label{graph lemma_0}
$$\mbox{\rm dim}_H({\mathcal G}(X,J)) = \max \{\mbox{\rm dim}_H({\mathcal G}(X, E_V), 1 \}$$
\end{lemma}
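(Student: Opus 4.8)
\textbf{Proof proposal for Lemma \ref{graph lemma_0}.}

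The plan is to exploit the decomposition $J = E_V \cup (J \setminus E_V)$ together with the representation $J\setminus E_V = \bigcup_{i=1}^\infty J_i$ from \eqref{decomposition}, and then combine the countable stability of Hausdorff dimension with the elementary fact that a Lipschitz image does not increase Hausdorff dimension. First I would write
$$
{\mathcal G}(X,J) = {\mathcal G}(X,E_V) \cup \bigcup_{i=1}^\infty {\mathcal G}(X,J_i),
$$
so that by countable stability $\dim_H {\mathcal G}(X,J) = \max\{\dim_H {\mathcal G}(X,E_V),\ \sup_i \dim_H {\mathcal G}(X,J_i)\}$. The point is that on each $J_i$ the function $V$ is constant, hence $X_t = B_{V(t)}$ is constant on $J_i$ (for every sample path, since this is a pathwise statement once the Brownian path is fixed), so ${\mathcal G}(X,J_i) = \{(t,c_i) : t \in J_i\}$ is (a subset of) a horizontal line segment, which has Hausdorff dimension exactly $1$ when $J_i$ is nondegenerate (and $0$ if it were a point, but the $J_i$ are open intervals). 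Therefore $\sup_i \dim_H {\mathcal G}(X,J_i) = 1$.

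Next I would observe that $\dim_H {\mathcal G}(X,E_V) \ge 1$ is not automatic, so one cannot naively drop the $1$ from the max; this is why the statement is phrased as $\max\{\dim_H {\mathcal G}(X,E_V),1\}$. Combining the two displayed facts gives
$$
\dim_H {\mathcal G}(X,J) = \max\Big\{\dim_H {\mathcal G}(X,E_V),\ 1\Big\},
$$
which is exactly the claim (noting the small typo in the statement where a brace is mismatched). One should also double-check the degenerate cases: if $E_V = \varnothing$ then $J\setminus E_V = J$ is a single interval on which $V$ is constant, so ${\mathcal G}(X,J)$ is a horizontal segment of dimension $1$, and by convention $\dim_H\varnothing = 0$, so the max formula still reads $\max\{0,1\}=1$, which is correct; if instead $J\setminus E_V = \varnothing$ then ${\mathcal G}(X,J) = {\mathcal G}(X,E_V)$ and there is nothing to prove.

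The only mild subtlety — and the closest thing to an obstacle — is justifying that $X$ is genuinely constant on each $J_i$ pathwise and almost surely simultaneously for all $i$, and that the countable union is handled correctly: since $V$ is constant on $J_i$, for $s,t \in J_i$ we have $X_s - X_t = B_{V(s)} - B_{V(t)} = 0$ because $V(s) = V(t)$, and this holds for every $\omega$ once the Brownian path $B$ is defined, so no exceptional null set even enters. Countable stability of $\dim_H$ over the countable family $\{E_V, J_1, J_2, \dots\}$ then applies with no measurability or exceptional-set issues. Everything else is routine, so I would keep the written proof to a few lines.
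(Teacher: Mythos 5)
Your proposal is correct and follows essentially the same route as the paper: decompose ${\mathcal G}(X,J)={\mathcal G}(X,E_V)\cup\bigcup_i{\mathcal G}(X,J_i)$ with $V$ (hence $X$) constant on each $J_i$, so each ${\mathcal G}(X,J_i)$ is a horizontal segment of dimension $1$, and conclude by countable stability of Hausdorff dimension. The only tiny caveat is your remark that when $J\setminus E_V=\varnothing$ there is "nothing to prove": in that case one still needs $\dim_H{\mathcal G}(X,E_V)\ge 1$ for the stated max formula, which follows from the Lipschitz projection of ${\mathcal G}(X,J)$ onto the nondegenerate interval $J$ that you already invoke.
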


\medskip

We can define the upper local uniform index in a slightly more general way for non-constant points $t$.

\begin{definition}The {\it upper local uniform H\"{o}lder index} on the non-constant points $t$ is defined as
$$
\alpha^*(t) = \sup \left\{\alpha\in [0, \infty)\,:\, \lim_{\delta\rightarrow 0}\left( \sup_{u_1, u_2 \in E_V\cap I(t,\delta),  \ u_1\ne u_2} \frac{|V(u_1) - V(u_2)|}{|u_1 - u_2|^\alpha}\right) = 0 \right\}
$$
\medskip
\begin{equation}\label{equpperholder}
{\alpha}^{\ast}(J) = \inf_{t\in J}  \alpha^{\ast}(t).
\end{equation}
\end{definition}

\medskip

\noindent{\bf Remark.} Notice that the analogous definition with non-constant point cannot give us a well-defined $\alpha_{\ast}(t)$. Indeed, if for all $\delta>0$, $I(t,\delta)$ contains infinitely many constant intervals, say one of them is  $(u_1,u_2)$, then $u_1,u_2$ are non-constant points but $V(u_1) = V(u_2)$, so this implies that $\alpha_*(t) = \infty$. The assumption that $V$ is strictly increasing on $J$ avoids such problem. 

\medskip

The following two propositions summarize some important properties of the local uniform H\"{o}lder indices. 

\begin{proposition}\label{propindex} Let $V$ be a continuous and strictly increasing function on the interval $J$ and let $T = V^{-1}$. Denote by $\beta^{\ast}(s)$ the upper local uniform H\"{o}lder index  of $T$ as the point $s$. Then for all $t\in J$. 
$$
\alpha_{\ast}(t) = \frac{1}{\beta^{\ast}(V(t))}.
$$
\end{proposition}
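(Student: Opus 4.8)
The plan is to unravel both local uniform Hölder indices directly from their definitions and observe that the two suprema appearing in the definitions of $\alpha_\ast(t)$ (for $V$) and $\beta^\ast(s)$ (for $T=V^{-1}$) are literally the same quantity after a change of variables. First I would fix $t\in J$, set $s = V(t)$, and note that since $V$ is a continuous strictly increasing bijection from $J$ onto $V(J)$, it maps the open interval $I(t,\delta)$ onto an open subinterval of $V(J)$ containing $s$; conversely $T$ maps small intervals around $s$ onto small intervals around $t$. So for the purpose of taking $\delta\to 0$ limits, ``$u_1,u_2\in I(t,\delta)$'' and ``$v_1 = V(u_1), v_2 = V(u_2)$ range over a neighborhood of $s$ shrinking to $s$'' describe cofinal families. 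Writing $v_i = V(u_i)$, i.e. $u_i = T(v_i)$, we have the identity
$$
\frac{|u_1-u_2|^\alpha}{|V(u_1)-V(u_2)|} = \frac{|T(v_1)-T(v_2)|^\alpha}{|v_1-v_2|} = \left(\frac{|T(v_1)-T(v_2)|}{|v_1-v_2|^{1/\alpha}}\right)^{\alpha}.
$$

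Next I would feed this identity into the two $\sup$-over-$\delta$-ball expressions. Because $x\mapsto x^\alpha$ is continuous, increasing, and fixes $0$, one has: $\lim_{\delta\to 0}\sup_{u_1,u_2\in I(t,\delta)} \frac{|u_1-u_2|^\alpha}{|V(u_1)-V(u_2)|} = 0$ if and only if $\lim_{\eta\to 0}\sup_{v_1,v_2\in I(s,\eta)} \frac{|T(v_1)-T(v_2)|}{|v_1-v_2|^{1/\alpha}} = 0$. (Here I would be slightly careful: the sup of $\big(g(v_1,v_2)\big)^\alpha$ tends to $0$ iff the sup of $g(v_1,v_2)$ tends to $0$, using that $\alpha>0$ and monotonicity/continuity of $t\mapsto t^\alpha$; the degenerate case $\alpha = 0$ is trivial and can be dismissed.) Consequently, $\alpha$ lies in the defining set for $\alpha_\ast(t)$ exactly when $1/\alpha$ lies in the defining set for $\beta^\ast(s)$. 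Taking infimum over the first set and supremum over the second, the map $\alpha\mapsto 1/\alpha$ being an order-reversing bijection of $(0,\infty)$, converts the infimum into a supremum, yielding $\alpha_\ast(t) = 1/\beta^\ast(V(t))$.

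The main obstacle I anticipate is purely bookkeeping around the neighborhood translation and the endpoint/boundary behaviour: making precise that $\sup_{u_1,u_2\in E_V\cap I(t,\delta)}$-type quantities for $V$ and $\sup_{v_1,v_2\in I(s,\eta)}$-type quantities for $T$ generate the same limiting condition as $\delta,\eta\to 0$, since $V$ (being continuous and strictly increasing) is uniformly continuous on compact subintervals and so for each $\delta$ there is $\eta$ with $V(I(t,\delta))\supset I(s,\eta)$ and vice versa; one must also handle the case $t$ or $s$ an endpoint of the interval, where $I(t,\delta)$ should be read as intersected with $J$. A secondary subtlety is the treatment of $\alpha = 0$ and $\beta^\ast(s) = 0$ or $=\infty$: the formula $\alpha_\ast(t) = 1/\beta^\ast(V(t))$ should be interpreted with the conventions $1/0 = \infty$ and $1/\infty = 0$, and I would state at the outset that $\alpha_\ast(t)\ge 1 > 0$ for strictly increasing $V$ (an elementary fact noted in the paper), so $\beta^\ast(V(t))$ is finite and the reciprocal is unambiguous. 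Apart from these, the argument is a direct algebraic manipulation of the two definitions and should be short.
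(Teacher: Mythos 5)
Your proposal is correct and follows essentially the same route as the paper: the same change-of-variables identity $\frac{|u_1-u_2|^{\alpha}}{|V(u_1)-V(u_2)|} = \left(\frac{|T(v_1)-T(v_2)|}{|v_1-v_2|^{1/\alpha}}\right)^{\alpha}$ combined with the observation that shrinking neighborhoods of $t$ and of $V(t)$ correspond under $V$ and $T$; the paper organizes this as two inequalities (bounding the sup over $I(t,\delta)$ by the sup over an interval $I(V(t),\widetilde{\delta})$ with $\widetilde{\delta}\to 0$, then reversing the roles of $V$ and $T$), while you phrase it as an equivalence of the two defining sets followed by the order-reversing bijection $\alpha\mapsto 1/\alpha$. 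One small caution: do not justify finiteness of $\beta^{\ast}(V(t))$ by citing $\alpha_{\ast}(t)\ge 1$, since in the paper that inequality is itself deduced from this proposition; instead note that $T$ is continuous and strictly increasing, so $\beta^{\ast}(V(t))\le 1$ by the elementary argument used for the first half of Proposition \ref{propindex1}, or simply rely on your stated conventions $1/0=\infty$ and $1/\infty=0$.
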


\begin{proof}
Let $\delta>0$ and consider $I(t, \delta)$. Then the image of $I(t,\delta)$ under $V$  is  $(V(t-\delta),V(t+\delta))$. Consider the smallest interval centered at $V(t)$ that contains $(V(t-\delta),V(t+\delta))$ and denote it by $I(V(t),\widetilde{\delta})$. Now, for all $u_1,u_2\in I(t,\delta)$, we write $T(v_1) = u_1, T(v_2) = u_2$ for unique $v_1,v_2$ in  $(V(t-\delta),V(t+\delta))$. We have that
$$
\frac{|u_1-u_2|^{\alpha}}{|V(u_1)-V(u_2)|} = \left(\frac{|T(v_1)-T(v_2)|}{|v_1-v_2|^{1/\alpha}}\right)^{\alpha}\le \left(\sup_{v_1,v_2\in I(V(t),\widetilde{\delta})} \frac{|T(v_1)-T(v_2)|}{|v_1-v_2|^{1/\alpha}}\right)^{\alpha}
$$
Hence, 
$$
\sup_{u_1,u_2\in I(t,\delta)}\frac{|u_1-u_2|^{\alpha}}{|V(u_1)-V(u_2)|} \le \left(\sup_{v_1,v_2\in I(V(t),\widetilde{\delta})} \frac{|T(v_1)-T(v_2)|}{|v_1-v_2|^{1/\alpha}}\right)^{\alpha}
$$
By a direction calculation, $\widetilde{\delta} = \max \{V(t+\delta)-V(t), V(t)-V(t-\delta)\}$. Observe that if $\delta\to 0$, then $\widetilde{\delta}\to 0 $ by continuity. Therefore, if $1/\alpha <\beta^{\ast}(V(t))$, then the right hand side tends to zero and therefore $\alpha \ge \alpha_{\ast}(t)$. Hence, $1/\alpha\le 1/\alpha_{\ast}(t)$ and hence $\beta^{\ast}(V(t)) \le 1/\alpha_{\ast}(t)$. A similar consideration by reversing the role of $V$ and $T$ gives the other side of the inequality. Hence, the proposition follows. 
\end{proof}

\begin{proposition}\label{propindex1}
 If $V$ is strictly increasing on $J$, then $\alpha^{\ast}(t)\le 1$ and $\alpha_{\ast}\ge 1$ for all $t\in J$.
\end{proposition}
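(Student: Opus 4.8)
The plan is to prove the two inequalities $\alpha^{\ast}(t) \le 1$ and $\alpha_{\ast}(t) \ge 1$ essentially independently, using only the fact that $V$ is strictly increasing and continuous on the compact interval $J$. Both reduce to elementary estimates on difference quotients combined with the squeeze obtained from subdividing a small interval $I(t,\delta)$ into many pieces.

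First I would handle $\alpha^{\ast}(t) \le 1$. Suppose for contradiction that $\alpha^{\ast}(t) > 1$, so there is some $\alpha > 1$ with
$$
\lim_{\delta\to 0}\left(\sup_{u_1,u_2\in I(t,\delta),\, u_1\ne u_2}\frac{|V(u_1)-V(u_2)|}{|u_1-u_2|^{\alpha}}\right) = 0 .
$$
Fix $\delta$ small with $t+\delta/2 \in J$ and, writing $M(\delta)$ for the supremum above, partition an interval $[a,b]\subset I(t,\delta)$ of positive length $h$ into $n$ equal subintervals of length $h/n$. By the triangle inequality and the Hölder bound on each piece,
$$
V(b) - V(a) = \sum_{j=1}^{n}\bigl(V(a+jh/n)-V(a+(j-1)h/n)\bigr) \le n\cdot M(\delta)\cdot (h/n)^{\alpha} = M(\delta)\, h^{\alpha}\, n^{1-\alpha}.
$$
Since $\alpha>1$, letting $n\to\infty$ forces $V(b) = V(a)$, contradicting strict monotonicity. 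Hence $\alpha^{\ast}(t)\le 1$.

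For $\alpha_{\ast}(t)\ge 1$, I would argue that for any $\alpha < 1$ the defining limit cannot be $0$, so $\alpha \notin$ the set over which the infimum is taken, giving $\alpha_{\ast}(t)\ge 1$. Concretely, take $\alpha<1$ and fix any $\delta>0$. Choosing $u_1,u_2 \in I(t,\delta)$ with $|u_1-u_2|$ as small as we like, strict monotonicity gives $|V(u_1)-V(u_2)|>0$, and the quotient $|u_1-u_2|^{\alpha}/|V(u_1)-V(u_2)|$ behaves like $|u_1-u_2|^{\alpha}$ divided by something that, by continuity of $V$, also tends to $0$; the point is that $|u_1-u_2|^{\alpha}$ dominates $|V(u_1)-V(u_2)|$ near the diagonal because $V$ is at worst "Lipschitz-like" — more carefully, using the same subdivision trick in reverse: for fixed endpoints $a<b$ in $I(t,\delta)$, some subinterval of length $(b-a)/n$ has $V$-increment at most $(V(b)-V(a))/n$, so
$$
\sup_{u_1,u_2\in I(t,\delta)}\frac{|u_1-u_2|^{\alpha}}{|V(u_1)-V(u_2)|} \ge \frac{((b-a)/n)^{\alpha}}{(V(b)-V(a))/n} = \frac{(b-a)^{\alpha}}{V(b)-V(a)}\, n^{1-\alpha} \longrightarrow \infty
$$
as $n\to\infty$ since $1-\alpha>0$. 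So the supremum is $+\infty$ for every $\delta$, the limit is not $0$, and $\alpha\notin$ the infimum set; hence $\alpha_{\ast}(t)\ge 1$. Taking infima over $t\in J$ then yields $\alpha^{\ast}(J)\le 1$ and $\alpha_{\ast}(J)\ge 1$.

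I expect the only subtlety — not really an obstacle — is getting the direction of the inequalities and quantifiers right in the $\alpha_{\ast}$ case, where one must locate a subinterval on which the $V$-increment is controlled from above (pigeonhole: among $n$ consecutive increments summing to $V(b)-V(a)$, at least one is $\le (V(b)-V(a))/n$), rather than controlling it from below. Everything else is the two elementary subdivision estimates above, and no probabilistic input is needed.
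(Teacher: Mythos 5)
Your proof is correct. For $\alpha^{\ast}(t)\le 1$ you and the paper share the same core idea: an $\alpha$-H\"{o}lder bound with $\alpha>1$ on a neighborhood forces $V$ to be constant there, contradicting strict monotonicity; the paper cites this as a standard fact from elementary analysis (via $V'\equiv 0$ on the neighborhood), whereas you spell out the subdivision/triangle-inequality estimate $V(b)-V(a)\le M(\delta)\,h^{\alpha}n^{1-\alpha}\to 0$, which is exactly the standard proof of that fact. Where you genuinely diverge is the inequality $\alpha_{\ast}(t)\ge 1$: the paper deduces it from Proposition \ref{propindex}, i.e.\ from the duality $\alpha_{\ast}(t)=1/\beta^{\ast}(V(t))$ with the inverse function $T=V^{-1}$, combined with the bound $\beta^{\ast}\le 1$ applied to the strictly increasing function $T$. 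You instead argue directly: for $\alpha<1$, the pigeonhole principle produces, among the $n$ equal subintervals of a fixed $[a,b]\subset I(t,\delta)$, one whose $V$-increment is at most $(V(b)-V(a))/n$, so the supremum in the definition is at least $(b-a)^{\alpha}n^{1-\alpha}/(V(b)-V(a))\to\infty$; hence it is $+\infty$ for every $\delta$, no $\alpha<1$ belongs to the set being infimized, and $\alpha_{\ast}(t)\ge 1$. Your route is more self-contained and slightly more general: it needs only strict monotonicity and avoids Proposition \ref{propindex} (whose proof uses continuity of $V$ to pass to $T$), at the cost of repeating a subdivision estimate; the paper's route is shorter once Proposition \ref{propindex} is available and emphasizes the symmetry between $V$ and $T$. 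Two cosmetic points: your choice ``fix $\delta$ with $t+\delta/2\in J$'' can fail when $t$ is the right endpoint of $J$, but any nondegenerate subinterval of $I(t,\delta)\cap J$ serves equally well; and the heuristic ``Lipschitz-like'' sentence is superseded by your precise pigeonhole argument and could simply be deleted.
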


\begin{proof}
This proof follows from a standard fact from elementary analysis. We provide a detailed argument here for completeness.  Note that if there exists $t\in J$ such that $\alpha^{\ast}(t)>1$, then let $1<\alpha<\alpha^{\ast}$ in a small neighborhood $I(t,\delta_t)$, we have 
$$
|V(x)-V(y)|\le |x-y|^{\alpha}, \forall x,y\in I(t,\delta_t)
$$
As $\alpha>1$, from elementary analysis, we know that $V'(s) = 0$ for all $s\in I(t,\delta_t)$, which shows $V$ is a constant function on $I(t,\delta_t)$, a contradiction. Hence,  $\alpha^{\ast}(t)\le 1$. The fact that $\alpha_{\ast}\ge 1$ follows from Proposition \ref{propindex}.
\end{proof}

\medskip

\section{Hausdorff dimension by local H\"{o}lder indices}

Let us recall that the Hausdorff dimension of a Borel set $E$ is given by $\mbox{dim}(E) = \sup \{\alpha\ge 0: {\mathcal H}^{\alpha}(E)>0\}$, where
$$
{\mathcal H}^{\alpha}(E) = \lim_{\delta\to 0} \inf \left\{ \sum_{i=1}^{\infty} |U_i|^{\alpha}: E \subset \bigcup_{i=1}^{\infty} U_i,  \ |U_i|\le \delta\right\}.
$$
($|U_i|$ denotes the diameter of the set $U_i$). We will be using frequently the countable stability of Hausdorff dimension (see e.g. \cite[p.49]{Falconer2013}).
$$
\mbox{dim}_H\left(\bigcup_{i=1}^{\infty} E_i\right) = \sup_{i} \mbox{dim}_H(E_i). 
$$
 
 Let $X_t$ be a centered continuous additive process defined on $J$. As in the previous section, we can write $X_t = B_{V(t)}$ where $B$ denotes Brownian motion and $V$ is a continuous increasing function with $V(0) = 0$.   
\medskip

\subsection{Upper bound.} Let $V$ be an increasing continuous function. The upper bound of the Hausdorff dimension will be dependent on the upper local uniform H\"{o}lder index. The following is our main theorem. 

\medskip

\begin{theorem}\label{theorem_upperBound}
 Let $(X_t)_{t\in J}$ be a centered continuous additive process with variance function $V(t)$. Then almost surely
$$
\mbox{\rm dim}_H \ \mathcal{G}(X,J) = 1, \quad \text{if} \;\mbox{\rm dim}_H (E_V) \leq \frac{{\alpha}^{\ast}}{2},
$$
and
$$
\mbox{\rm dim}_H\mathcal{G}(X,J) \leq 1 + \mbox{\rm dim}_H (E_V) - \frac{{\alpha}^*}{2}, \quad \text{if} \; \mbox{\rm dim}_H (E_V) > \frac{{\alpha}^{\ast}}{2},
$$
where ${\alpha}^*= {\alpha}^*(J)$.
\end{theorem}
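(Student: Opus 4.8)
The plan is to invoke Lemma~\ref{graph lemma_0} to reduce the statement to an almost sure upper bound for $\dim_H\mathcal{G}(X,E_V)$, and then to run a covering argument in the spirit of Taylor and Orey, adapted to the case where $E_V$ is an arbitrary (possibly fractal) compact set rather than an interval: instead of partitioning an interval into equal pieces, I would cover $E_V$ efficiently by short intervals and control the oscillation of $X=B\circ V$ over each of them.

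\emph{First, uniformize the H\"older index.} Fix $\alpha<\alpha^{\ast}=\alpha^{\ast}(J)$. For each $t\in J$ we have $\alpha<\alpha^{\ast}(t)$, so there is $\delta_t>0$ with $|V(u_1)-V(u_2)|\le|u_1-u_2|^{\alpha}$ for all $u_1,u_2\in I(t,\delta_t)$. Covering the compact interval $J$ by the intervals $I(t,\delta_t/2)$, extracting a finite subcover, and taking the associated Lebesgue number $\delta_0>0$, one gets: every subinterval $I\subset J$ with $|I|\le\delta_0$ lies inside some $I(t_i,\delta_{t_i})$, hence $|V(I)|\le|I|^{\alpha}$.

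\emph{Next, cover, count, and take expectations.} Fix $s>\dim_H E_V$, so $\mathcal{H}^s(E_V)=0$; thus for every $\varepsilon>0$ and every small $\delta\le\delta_0$ there is a cover $E_V\subset\bigcup_j I_j$ with $|I_j|\le\delta$ and $\sum_j|I_j|^s<\varepsilon$. Over each $I_j$ the graph piece $\{(t,X_t):t\in I_j\}$ lies in the rectangle $I_j\times[\inf_{I_j}X,\sup_{I_j}X]$ of height $\mathrm{osc}_j:=\sup_{t,t'\in I_j}|X_t-X_{t'}|=\sup\{|B_v-B_w|:v,w\in V(I_j)\}$, which is covered by at most $C(\mathrm{osc}_j|I_j|^{-1}+1)$ squares of side $|I_j|$; so for any $\gamma>1$,
$$
\mathcal{H}^{\gamma}_{C'\delta}\big(\mathcal{G}(X,E_V)\big)\ \le\ C\sum_j\big(\mathrm{osc}_j\,|I_j|^{\gamma-1}+|I_j|^{\gamma}\big).
$$
Since the $I_j$ are deterministic (they depend only on $V$) and $\E[\mathrm{osc}_j]\lesssim\sqrt{|V(I_j)|}\le|I_j|^{\alpha/2}$ (by the previous step and the elementary bound $\E\sup_{[0,T]}|B|\lesssim\sqrt{T}$),
$$
\E\Big[\mathcal{H}^{\gamma}_{C'\delta}\big(\mathcal{G}(X,E_V)\big)\Big]\ \lesssim\ \sum_j\big(|I_j|^{\gamma-1+\alpha/2}+|I_j|^{\gamma}\big).
$$
If $\gamma\ge 1+s-\alpha/2$ (which also forces $\gamma\ge s$, as $\alpha\le 2$), both exponents are $\ge s$, so the right side is $\lesssim\sum_j|I_j|^s<\varepsilon$. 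Letting $\varepsilon\to0$ and $\delta\to0$ along a summable sequence, and using that $\mathcal{H}^{\gamma}_{\delta}$ increases as $\delta\downarrow0$, forces $\mathcal{H}^{\gamma}(\mathcal{G}(X,E_V))=0$ almost surely. Given any $\gamma>1+\max\{0,\dim_H E_V-\alpha^{\ast}/2\}$, choose $s>\dim_H E_V$ and $\alpha<\alpha^{\ast}$ close enough that $\gamma\ge1+s-\alpha/2$; intersecting the null events over a countable sequence $\gamma\downarrow 1+\max\{0,\dim_H E_V-\alpha^{\ast}/2\}$ yields $\dim_H\mathcal{G}(X,E_V)\le 1+\max\{0,\dim_H E_V-\alpha^{\ast}/2\}$ a.s. Feeding this into Lemma~\ref{graph lemma_0} gives both assertions: the value equals $1$ when $\dim_H E_V\le\alpha^{\ast}/2$, and is $\le 1+\dim_H E_V-\alpha^{\ast}/2$ otherwise.

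\emph{Main obstacle.} The probabilistic input is routine (a Brownian oscillation moment bound plus the standard passage from expectation estimates to an almost sure statement). The delicate point is the first step — turning the pointwise indices $\alpha^{\ast}(t)$ into a single modulus $|V(I)|\le|I|^{\alpha}$ valid uniformly over all sufficiently short subintervals of $J$ — together with the bookkeeping in the covering estimate: because $E_V$ need not be an interval one must estimate against an efficient cover of $E_V$ (hence intervals of many scales at once), and the auxiliary parameters $s$ and $\alpha$ must be chosen relative to the target $\gamma$ so that both exponents $\gamma-1+\alpha/2$ and $\gamma$ land at or above $s$.
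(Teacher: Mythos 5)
Your argument is correct in substance, but it follows a genuinely different route from the paper's. The paper first invokes the almost sure H\"older continuity of Brownian sample paths (every exponent $<1/2$) to conclude that $X=B\circ V$ is pathwise H\"older of every exponent $<(1-\eta)\alpha^{\ast}(t)/2$ on $E_V\cap I(t,\delta_t)$, uses compactness of $E_V$ to reduce to finitely many such neighborhoods, and then applies the deterministic Lemma~\ref{graph lemma} (graph of a H\"older-$\alpha$ function on a Borel set $E$ has dimension at most $1+\dim_H E-\alpha$), treating the case $\alpha^{\ast}=0$ separately via the product formula; countable stability and Lemma~\ref{graph lemma_0} finish the proof. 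You never use pathwise H\"older continuity of $B$: you uniformize the modulus of $V$ by a Lebesgue-number argument, run the covering computation directly on an efficient cover of $E_V$, and control the oscillation only in the mean, via $\E[\mathrm{osc}_j]\lesssim\sqrt{|V(I_j)|}\le|I_j|^{\alpha/2}$, concluding $\E\big[\mathcal{H}^{\gamma}_{C'\delta}(\mathcal{G}(X,E_V))\big]\to 0$ and hence $\mathcal{H}^{\gamma}(\mathcal{G}(X,E_V))=0$ almost surely. This is a legitimate Taylor-style first-moment method; its advantage is that the only probabilistic input is the elementary bound $\E\sup_{[0,T]}|B|\lesssim\sqrt{T}$ (plus a monotone-convergence or Borel--Cantelli passage to an a.s.\ statement), whereas the paper's route is more modular, isolating the deterministic covering computation in a reusable lemma and letting all randomness enter through the known regularity of Brownian paths.

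Two small points should be tightened, though neither is a genuine gap. First, in the setting of this theorem $V$ need not be strictly increasing, and the relevant definition of $\alpha^{\ast}(t)$ only controls $|V(u_1)-V(u_2)|$ for $u_1,u_2\in E_V\cap I(t,\delta_t)$; your claim that the bound holds for all $u_1,u_2\in I(t,\delta_t)$ (equivalently $|V(I)|\le|I|^{\alpha}$ for every short subinterval $I$) requires the one-line observation that $V$ is increasing, continuous, and constant on each component of $J\setminus E_V$, so the oscillation of $V$ over any interval is attained at points of $E_V$ inside it. Second, when $\alpha^{\ast}=0$ you cannot literally choose $\alpha<\alpha^{\ast}$; take $\alpha=0$ instead (bounded oscillation), which yields the exponent pair $\gamma-1$ and $\gamma$ and recovers the bound $1+\dim_H E_V$ --- precisely the degenerate case the paper handles separately with the product formula. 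With these remarks inserted, your proof is complete and gives the same conclusion as the paper's.
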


\medskip

We first need the following lemma, which generalizes a classical result on an upper bound for the Hausdorff dimension of graph of $\alpha$-H\"{o}lder continuous functions over an interval is at most $2-\alpha$ \cite[Ch 11]{Falconer2013}. The generalization is straight-forward, but we present it here for the sake of completeness.

\begin{lemma}\label{graph lemma}
Let $E$ be a Borel set of $\R$ and let $f:E\to\R$ be a H\"{o}lder continuous function with exponent $\alpha$. i.e. there exists $C>0$ such that for all $x,y\in E$, 
$$
|f(x)-f(y)|\le C|x-y|^{\alpha}.
$$
Then 
$$
\mbox{dim}_H ({\mathcal G} (f,E)) \le 1+\mbox{dim}_H (E) - \alpha. 
$$
\end{lemma}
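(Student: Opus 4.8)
The plan is to bound the graph $\mathcal{G}(f,E)$ by a Cartesian-product-type covering argument, exactly mirroring the classical proof that the graph of an $\alpha$-H\"older function on an interval has Hausdorff dimension at most $2-\alpha$, but keeping track of the ambient set $E$ through its own Hausdorff dimension. First I would fix $s > \dim_H(E)$, so that for every $\delta>0$ there is a cover of $E$ by sets $\{U_i\}$ with $|U_i|\le \delta$ and $\sum_i |U_i|^{s} \le 1$ (one may refine each $U_i$ to have diameter comparable to, say, a dyadic scale if convenient, but it is not strictly necessary). For each $i$, by $\alpha$-H\"older continuity the oscillation of $f$ over $E\cap U_i$ is at most $C|U_i|^{\alpha}$, hence $f(E\cap U_i)$ is contained in an interval of length $\le C|U_i|^{\alpha}$.

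Next I would subdivide. The portion of the graph lying above $U_i$, namely $\{(t,f(t)): t\in E\cap U_i\}$, sits inside the rectangle $U_i \times [\text{an interval of length } C|U_i|^{\alpha}]$. Cover this rectangle by roughly $N_i := \lceil C|U_i|^{\alpha-1}\rceil + 1$ squares of side $|U_i|$; since $\alpha\le 1$ (which holds in our application, though the lemma only needs $\alpha \le 1$ for the exponent bookkeeping — if $\alpha>1$ the bound is still valid but coarse) we have $N_i \lesssim |U_i|^{\alpha-1}$. Each such square has diameter $\sqrt{2}\,|U_i| \lesssim \delta$. Taking all these squares over all $i$ gives a cover of $\mathcal{G}(f,E)$ by sets of diameter $\lesssim \delta$, and for any exponent $\beta$,
$$
\sum_{i}\sum_{j=1}^{N_i} (\sqrt2\,|U_i|)^{\beta} \;\lesssim\; \sum_i |U_i|^{\alpha - 1}\,|U_i|^{\beta} \;=\; \sum_i |U_i|^{\beta + \alpha - 1}.
$$
Choosing $\beta = s + 1 - \alpha$ makes the right-hand side $\lesssim \sum_i |U_i|^{s} \le 1$, uniformly in $\delta$. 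Hence $\mathcal{H}^{\,s+1-\alpha}(\mathcal{G}(f,E)) < \infty$, so $\dim_H \mathcal{G}(f,E) \le s + 1 - \alpha$. Letting $s \downarrow \dim_H(E)$ yields the claim.

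I do not anticipate a genuine obstacle here — this is a routine adaptation — but the one point requiring a little care is the passage from an abstract cover of $E$ to one where the number of side-$|U_i|$ squares needed to cover each rectangle is controlled; one should note $N_i \le C|U_i|^{\alpha-1} + 1$ and observe that the $+1$ term contributes $\sum_i |U_i|^{\beta}$ which is also $\le \sum_i |U_i|^{s} \cdot |U_i|^{1-\alpha} \lesssim \sum_i |U_i|^s$ once $\delta$ is small and $\beta \ge s$ (using $\beta = s+1-\alpha \ge s$ since $\alpha\le 1$), so both pieces are harmless. One should also remark that if $\dim_H(E) + 1 - \alpha \le 0$ or more simply if the bound produced is less than the trivial lower bound, the statement is still formally correct as an upper bound; in the application $E = E_V$ and the estimate is combined with Lemma~\ref{graph lemma_0} and the local H\"older index $\alpha = \alpha^\ast$ via a further decomposition of $J$ into pieces where $V$ is uniformly H\"older, which is where the localization in Theorem~\ref{theorem_upperBound} enters.
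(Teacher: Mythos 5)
Your proposal is correct and follows essentially the same route as the paper's proof: cover $E$ by sets $U_i$ of small diameter with $\sum_i |U_i|^s$ controlled for $s>\dim_H(E)$, cover each rectangle $U_i\times I_i$ (with $|I_i|\lesssim |U_i|^{\alpha}$) by about $|U_i|^{\alpha-1}$ squares of side $|U_i|$, and sum with exponent $1+s-\alpha$ before letting $s\downarrow\dim_H(E)$. Your explicit treatment of the $+1$ term in the count of squares (harmless since $\beta=s+1-\alpha\ge s$ when $\alpha\le 1$, which is the only case used in the paper) is if anything slightly more careful than the paper's bookkeeping.
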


\begin{proof}
Let $s>\mbox{dim}(E)$ and let $\epsilon, \delta>0$. Then ${\mathcal H}^{s}_{\delta}(E) <\infty$ and there exists a countable cover $\{U_i\}$ of $E$ such that 
$$
 \sum_{i=1}^{\infty} |U_i|^s\le {\mathcal H}^{s}_{\delta}(E)  +\epsilon
$$
with $|U_i|\le \delta$. On each $U_i$, by the H\"{o}lder condition, the graph ${\mathcal G}(f,U_i)$ is covered by $U_i\times I_i$ where $I_i$ is an interval such that $|I_i| \le C|U_i|^{\alpha}$. Hence,  the diameter of ${\mathcal G}(f,U_i)$ is at most $C' |U_i|^{\alpha}$ for some $C'>0$. We now divide $I_i$ into smaller intervals of length $|U_i|$; then ${\mathcal G}(f,U_i)$ can be covered by $C' |U_i|^{\alpha-1}$ number of covers of  diameter at most $\sqrt{2}|U_i|$. Summing over all such covers to the power $t = 1+s-\alpha$, 
$$
{\mathcal H}_{\sqrt{2}\delta}^{t}({\mathcal G}(f,E)) \le \sqrt{2^t}C'\cdot \sum_{i=1}^{\infty}  |U_i|^{\alpha-1}  |U_i|^t \le \sqrt{2^t}C' \cdot({\mathcal H}^{s}_{\delta}(E)  +\epsilon) 
$$ 
Hence, 
$$
\mbox{dim}_H ({\mathcal G}(f,E)) \le  1+s-\alpha.
$$ 
Letting $s$ approach $\mbox{dim}_H(E)$, our desired result follows. 
\end{proof}

\medskip

\noindent{\it Proof of Theorem \ref{theorem_upperBound}.} If ${\alpha}^{\ast} = 0$ then we may apply monotonicty and the product formula \cite[Ch. 7]{Falconer2013} for Hausdorff dimension to obtain
$$\dim_H\mathcal{G}(X, E_V) \leq \dim_H\left(\,E_V\times \left[\min_{t\in E_V} X(t),\, \max_{t\in E_V} X(t)\right]\right) = 1 + \dim_H E_V.$$
We now assume ${\alpha}^{\ast}>0$ and let $0<\eta<1$. Then, for all $t\in E_V$ we have $\alpha^{\ast}(t)>0$ and consequently there exists $\delta_t>0$ such that
\begin{equation}
|V(u_1)-V(u_2)|<|u_1 - u_2|^{(1-\eta)\alpha^*(t)}, \quad \forall u_1, u_2 \in E_V\cap I(t, \delta_t).
\end{equation}
Now, since $X_t = B_{V(t)}$ and $(B_t)$ has almost surely $\alpha$-H\"{o}lder sample paths for all $0<\alpha<\frac{1}{2}$, we have that $(X_t)_{t\in E_V\cap I(t,\delta)}$ has almost surely $\alpha$-H\"{o}lder sample paths for all $0<\alpha<(1-\eta)\frac{\alpha^*(t)}{2}$. By using the compactness of $E_V$ we may then obtain $\{t_1, \ldots ,t_N\}\subset E_V$ for which 
$$E_V = \bigcup_{k=1}^{N} E_V\cap I(t_k, \delta_{t_k}).$$
Thus, applying countable stability and monotonicity yields
\begin{align*}
\dim_H\mathcal{G}(X,E_V) &= \max_{k\in\{1,\ldots, N\}} \dim_H\mathcal{G}(X,E_V\cap I(t_k, \delta_{t_k})) \\
&\leq \max_{k\in\{1,\ldots, N\}} \left(1+\dim_H E_V - (1-\eta)\frac{\alpha^*(t_k)}{2}\right) \ (\mbox{by Lemma \ref{graph lemma}}) \\ 
&\leq 1+\dim_H E_V - (1-\eta)\frac{\alpha^*}{2},
\end{align*}
and since $\eta>0$ is arbitrary we have 
$$
\dim_H\mathcal{G}(X, E_V) \leq 1+\dim_H E_V - \frac{\alpha^*}{2}.
$$
Combining with Lemma \ref{graph lemma_0}, the conclusion of this theorem follows. 
\qquad$\Box$

\medskip

\subsection{Lower bounds.} We now aim to prove  Theorem \ref{hausmainresult}. As $E_V = J$ for $V$ strictly increasing, the upper bound follows from Theorem \ref{theorem_upperBound}. We just need to establish the lower bound. Denote by ${\mathcal M}(E)$ the set of all finite Borel measures that are compactly supported on $E$.  We recall that for a measure $\mu\in\mathcal{M}(E)$, the $s$-energy of $\mu$ is defined as
$$
I_{s}(\mu) = \int\int \frac{1}{|x-y|^s}d\mu(x)d\mu(y).
$$
Moreover, 
$$
\mbox{\rm dim}_H E = \sup\{s \, : \, \exists \, \mu\in\mathcal{M}(E) \; \text{s.t.} \; I_s(\mu) < \infty\}.
$$
\medskip

\begin{lemma}\label{boundonexpec}
There exists $C>0$ such that for all $t,u \in J$
\begin{equation}\label{eq_bound_expect}
\mathbb{E}\left[\frac{1}{\left(|t-u|^2 + |X_t - X_u|^2\right)^{\frac{s}{2}}} \right] \leq C \frac{|t-u|^{-s+1}}{\sqrt{|V(t)-V(u)|}}.
\end{equation}
\end{lemma}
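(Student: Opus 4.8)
The plan is to reduce the estimate to a one-dimensional Gaussian integral and then simply discard the Gaussian tail. By Proposition~\ref{Varlemma} the increment $X_t-X_u$ has law $N\!\left(0,\,V(t)-V(u)\right)$, and since $V$ is increasing (Proposition~\ref{Varlemma}(2)) I may assume without loss of generality $u<t$ and set $a=|t-u|$ and $\sigma^{2}=V(t)-V(u)\ge 0$. If $t=u$ or $V(t)=V(u)$, then (for the exponents $s>1$ that are relevant below) the right-hand side of \eqref{eq_bound_expect} is $+\infty$, so there is nothing to prove; hence I may assume $a>0$ and $\sigma>0$.

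Then I would compute directly, writing the expectation against the $N(0,\sigma^{2})$ density, bounding the exponential factor by $e^{-x^{2}/(2\sigma^{2})}\le 1$, and rescaling via $x=ay$:
\begin{align*}
\mathbb{E}\!\left[\frac{1}{\left(a^{2}+|X_t-X_u|^{2}\right)^{s/2}}\right]
&=\frac{1}{\sqrt{2\pi}\,\sigma}\int_{\R}\frac{e^{-x^{2}/(2\sigma^{2})}}{(a^{2}+x^{2})^{s/2}}\,dx\\
&\le\frac{1}{\sqrt{2\pi}\,\sigma}\int_{\R}\frac{dx}{(a^{2}+x^{2})^{s/2}}
=\frac{a^{1-s}}{\sqrt{2\pi}\,\sigma}\int_{\R}\frac{dy}{(1+y^{2})^{s/2}}.
\end{align*}
Taking $C=\frac{1}{\sqrt{2\pi}}\int_{\R}(1+y^{2})^{-s/2}\,dy$ then gives exactly $C\,|t-u|^{1-s}\big/\sqrt{|V(t)-V(u)|}$, as claimed.

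The only point needing attention is that $C<\infty$, i.e. that $\int_{\R}(1+y^{2})^{-s/2}\,dy$ converges, which happens precisely when $s>1$; so I would state the lemma for $1<s<2$ (or simply $s>1$). That is exactly the range in which the lemma is later used: it feeds into the expectation of the $s$-energy $\iint\big(|t-u|^{2}+|X_t-X_u|^{2}\big)^{-s/2}\,d\mu(t)\,d\mu(u)$ of the pushforward to $\mathcal{G}(X,J)$ of a measure $\mu$ on $J$, for $1<s<2-\alpha_{\ast}/2\le 3/2$. There is no genuine obstacle here: the content of the estimate is just that deleting the Gaussian factor still leaves an integrable singularity, with the factor $a^{1-s}$ arising from scaling in $x$ and the factor $\sigma^{-1}$ from the normalization of the density; the only care required is fixing the admissible range of $s$ and disposing of the degenerate cases $t=u$ and $V(t)=V(u)$.
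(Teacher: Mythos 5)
Your argument is correct and is essentially the paper's: both write the expectation against the $N(0,\,V(t)-V(u))$ density, discard the Gaussian factor, and extract $\sigma^{-1}$ from the normalization and $|t-u|^{1-s}$ from scaling (you via the substitution $x=ay$, the paper via a change of variables followed by splitting the integral at $(t-u)^2/(V(t)-V(u))$ — the same deterministic integral either way). Your explicit restriction to $s>1$ is a fair observation: the lemma as stated omits it, but the paper's own proof also needs $s>1$ for convergence of its tail integral, and the lemma is only ever applied with $1<s<2$.
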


\begin{proof}
Since $X_t - X_u \sim N(0,\, V(t)-V(u))$ we have 
\begin{align*}
&\mathbb{E}\left[\frac{1}{\left(|t-u|^2 + |X_t - X_u|^2\right)^{\frac{s}{2}}} \right] \\[1em]
&= \frac{1}{\sqrt{2\pi (V(t)-V(u))}} \int_{0}^{\infty} \left((t-u)^2 + r^2\right)^{-\frac{s}{2}}\text{exp} \left(-\frac{r^2}{2(V(t)-V(u))}\right)\, dr \\[1em]
&= \frac{1}{2\sqrt{2\pi}} \int_{0}^{\infty} \left((t-u)^2 + (V(t)-V(u))w\right)^{-\frac{s}{2}}\,w^{-\frac{1}{2}}\,e^{-\frac{w}{2}}\, dw \\[1em]
&\lesssim \int_{0}^{\infty} \left((t-u)^2 + (V(t)-V(u))w\right)^{-\frac{s}{2}}\,w^{-\frac{1}{2}}\, dw.
\end{align*}
Now, since $V(t)$ is assumed to be strictly increasing we may split the integral
\begin{align*}
& \int_{0}^{\infty} \left((t-u)^2 + (V(t)-V(u))w\right)^{-\frac{s}{2}}\,w^{-\frac{1}{2}}\, dw \\[1em]
&\le\,\Bigg(\int_{0}^{\frac{(t-u)^2}{V(t)-V(u)}} (t-u)^{-s}\,w^{-\frac{1}{2}}\, dw + \int_{\frac{(t-u)^2}{V(t)-V(u)}}^{\infty} \left((V(t)-V(u))w\right)^{-\frac{s}{2}}\,w^{-\frac{1}{2}}\, dw \Bigg) \\[1em]
&\lesssim \frac{|t-u|^{-s+1}}{\sqrt{|V(t)-V(u)|}},
\end{align*}
and this completes the proof.
\end{proof}

\medskip

\noindent{\it Proof of Theorem \ref{hausmainresult}}. Now that the upper bound has been proved, it remains to show that the lower bound holds. Also, by considering a projection, the Hausdorff dimension of the graph is at least 1, and so we just need to consider the case where $2-\frac{\alpha_{\ast}}{2}\ge 1$ (i.e. $\alpha_{\ast}\le 2$).  Let $\epsilon>0$. Then we can find $t\in J$ such that
$$
 \alpha_{\ast}\le \alpha_{\ast}(t) \le \alpha_{\ast}+\epsilon.
$$
Since $\alpha_{\ast}\ge 1$, by definition of $\alpha_*(t)$ see that for all $\eta>0$,  there exists $\delta_t > 0$ such that
\begin{equation}\label{eq1.1}
|u_1-u_2|^{(1+\eta)\alpha_*(t)} \leq |V(u_1)-V(u_2)|, \quad\forall u_1, u_2\in I(t, \delta_t).
\end{equation}
Note that from monotonicity of dimension, 
$$
\mbox{dim}_H ({\mathcal G}(X,J)) \ge \mbox{dim}_H ({\mathcal G}(X,I(t,\delta_t))).
$$
 Let $J_1 = J\cap I(t,\delta_t)$. Define a measure $\mu$ on the graph via 
$$\mu_\mathcal{G}(E) = m \{t\in J_1\,:\,(t, X_t)\in E\}$$
where $m$ is Lebesgue measure. Then, using Lemma \ref{boundonexpec} and (\ref{eq1.1}), for all $s>0$ we have
$$
\begin{aligned}
\mathbb{E} I_s (\mu_\mathcal{G}) =& \iint \mathbb{E}\left[\frac{1}{\left(|v-u|^2 + |X_v - X_u|^2\right)^{\frac{s}{2}}} \right]\, dv \;du\\
 \leq& C \iint_{J_1\times J_1} \frac{|v-u|^{-s+1}}{\sqrt{|V(v)-V(u)|}}\, dv \;du
 \\
\leq& C \iint_{J_1\times J_1} |v-u|^{-s+1-\frac{(1+\eta)\alpha_*(t)}{2}}\, dv \;du\\
\end{aligned}
$$
which is finite if $s-1 + \frac{(1+\eta)\alpha_*(t)}{2}<1$. Since this holds for all $\eta>0$, we have 
$$\dim_H\mathcal{G}(X,J))\geq \dim_H\mathcal{G}(X,J_1)\geq 2 - \frac{\alpha_*(t_1)}{2}\geq 2 - \frac{\alpha_*+\epsilon}{2}.$$
Letting $\epsilon\to 0$, the proof is complete. 
\qquad$\Box$

\medskip

\subsection{Fractional Brownian Motion.} Our results in this section works as well to fractional Brownian motion. Let $0<H<1$, the fractional Brownian motion with Hurst index $H$, denoted by $(B^H_t)$, is a continuous stochastic processes such that 
$$
B^H_t-B^H_s \sim N(0, |t-s|^{2H}).
$$
We notice that the above proof works exactly the same for $X^H_t  = B^H_{V(t)}$ and the following theorem can easily be obtained. 

\begin{theorem}
 Let $(X^H_t)_{t\in J}$ be the continuous process such that $X^H_t = B^H_{V(t)}$ where $V$ is a strictly increasing function on $J$. Then almost surely
Then, almost surely
\begin{equation}
\max\left\{2 -  H\alpha_{\ast}, 1\right\} \leq \dim_H \ \mathcal{G}(X, J)\leq 2 -H\alpha^{\ast}.
\end{equation}
where ${\alpha}_{\ast} = {\alpha}_*(J)$ and ${\alpha}^{\ast} = {\alpha}^{\ast}(J)$.
\end{theorem}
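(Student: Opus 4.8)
The plan is to simply observe that the entire argument establishing Theorem \ref{hausmainresult} goes through verbatim once one replaces the Gaussian density of the Brownian increments with that of the fractional Brownian increments. Concretely, the only two places where the specific law of $X_t - X_u$ enters are (i) the H\"older regularity of the sample paths of $B$, used in the proof of the upper bound via Theorem \ref{theorem_upperBound} and Lemma \ref{graph lemma}, and (ii) the energy estimate of Lemma \ref{boundonexpec}, used in the proof of the lower bound. I would re-derive each of these for $B^H$.

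For the upper bound, recall that $(B^H_t)$ has almost surely $\gamma$-H\"older sample paths for every $0<\gamma<H$ (this is Kolmogorov's continuity criterion applied to $\E|B^H_t-B^H_s|^p = c_p|t-s|^{pH}$). Hence, arguing exactly as in the proof of Theorem \ref{theorem_upperBound}: at a non-constant point $t$ with $\alpha^{\ast}(t)>0$, for every $\eta\in(0,1)$ there is $\delta_t>0$ with $|V(u_1)-V(u_2)|<|u_1-u_2|^{(1-\eta)\alpha^{\ast}(t)}$ on $E_V\cap I(t,\delta_t)$, so $X^H_t = B^H_{V(t)}$ is locally H\"older of every exponent below $(1-\eta)H\alpha^{\ast}(t)$. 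Lemma \ref{graph lemma} then gives $\dim_H \mathcal{G}(X^H, E_V\cap I(t,\delta_t)) \le 1 + \dim_H E_V - (1-\eta)H\alpha^{\ast}(t)$; since $E_V = J$ for $V$ strictly increasing, compactness, countable stability, and letting $\eta\to 0$ yield $\dim_H \mathcal{G}(X^H,J) \le 2 - H\alpha^{\ast}$ (and the trivial bound $\dim_H \mathcal{G}(X^H,J)\ge 1$ from projection onto $J$).

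For the lower bound, I would prove the fractional analogue of Lemma \ref{boundonexpec}: since $X^H_t - X^H_u \sim N(0, (V(t)-V(u))^{2H})$, the same computation (substitute $r = (V(t)-V(u))^H \sqrt{w}$ and split the resulting integral at $w = |t-u|^2/(V(t)-V(u))^{2H}$) gives
\begin{equation*}
\E\left[\frac{1}{\bigl(|t-u|^2 + |X^H_t - X^H_u|^2\bigr)^{s/2}}\right] \lesssim \frac{|t-u|^{-s+1}}{(V(t)-V(u))^{H}}.
\end{equation*}
Then, exactly as in the proof of Theorem \ref{hausmainresult}, for $\epsilon>0$ pick $t\in J$ with $\alpha_{\ast}(t)\le\alpha_{\ast}+\epsilon$; for every $\eta>0$ there is $\delta_t>0$ with $|u_1-u_2|^{(1+\eta)\alpha_{\ast}(t)}\le |V(u_1)-V(u_2)|$ on $I(t,\delta_t)$, so the occupation measure $\mu_{\mathcal{G}}$ on the graph over $J_1 = J\cap I(t,\delta_t)$ satisfies
\begin{equation*}
\E\, I_s(\mu_{\mathcal{G}}) \lesssim \iint_{J_1\times J_1} |v-u|^{-s+1-H(1+\eta)\alpha_{\ast}(t)}\, dv\, du < \infty
\end{equation*}
whenever $s < 2 - H(1+\eta)\alpha_{\ast}(t)$; letting $\eta\to 0$ and $\epsilon\to 0$ and using the energy/dimension characterization gives $\dim_H\mathcal{G}(X^H,J) \ge 2 - H\alpha_{\ast}$, and combined with the projection bound, $\dim_H\mathcal{G}(X^H,J)\ge\max\{2-H\alpha_{\ast},1\}$.

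There is essentially no obstacle here beyond bookkeeping: the proof is structurally identical to the Brownian case with $\frac12$ replaced by $H$ throughout, because the only input used is that the increment $X^H_t-X^H_u$ is centered Gaussian with variance $(V(t)-V(u))^{2H}$ and that $B^H$ has sample-path H\"older exponent $H^-$. The one point deserving a word of care is that fractional Brownian motion does \emph{not} have independent increments, so one cannot invoke a Dambis--Dubin--Schwartz-type representation; here, however, $X^H_t := B^H_{V(t)}$ is simply taken as the definition of the process, so this is a non-issue for the statement as phrased. I would remark that, unlike Proposition \ref{propindex1}, there is no longer a forced collapse of the two bounds, so the estimate can be strict on both sides.
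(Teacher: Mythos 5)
Your proposal is correct and follows exactly the route the paper indicates (and leaves as an omitted detail): the upper bound via the $H^-$-H\"older regularity of $B^H$ fed into Theorem \ref{theorem_upperBound} and Lemma \ref{graph lemma}, and the lower bound via the fractional analogue (\ref{eq_bound_expect_frac}) of Lemma \ref{boundonexpec} plugged into the energy argument of Theorem \ref{hausmainresult}. Your derivation of the expectation bound and the bookkeeping with $\eta,\epsilon$ match the paper's intended argument, so there is nothing further to add.
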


\medskip

The upper bound follows from the fact that fractional Brownian motion is H\"{o}lder to all exponents less than $H$, while the lower bound is to observe that (\ref{eq_bound_expect}) is replaced by 
\begin{equation}\label{eq_bound_expect_frac}
\mathbb{E}\left[\frac{1}{\left(|t-u|^2 + |X_t - X_u|^2\right)^{\frac{s}{2}}} \right] \lesssim \frac{|t-u|^{-s+1}}{{|V(t)-V(u)|^H}}.
\end{equation}
We will omit the detail of the proof.

\medskip

\section{Prelude to Fourier dimension}

In the rest of the paper, we will provide an estimate of the almost sure Fourier dimension for graphs of continuous additive processes and also the graph of the Brownian staircase. In this section, we will give out the necessary results needed.

\subsection{Some results of Kahane.} The following general lemma was first used by Kahane \cite{K1985}, and is, by far, one of the most commonly used techniques for computing the almost sure Fourier dimension of random sets. We take it out as a lemma which may be useful for future study, Another version of this lemma can also be found in \cite[Lemma 6]{E2}. 
\medskip

\begin{lemma}\label{randmeaslem}
Let $\mu = \mu_\omega$ denote a random measure on $\R^d$ that is compactly supported almost surely and let $E$ be a Borel set of $\R^d$. Further, let $C, c, \gamma >0$ be such that
\begin{equation}\label{exptdecayassump}
\mathbb{E}\left[|\hat{\mu}(\xi)|^{2q}\right] \lesssim C^q q^{cq} |\xi|^{-\gamma q}, \; \forall q\in\Z^+, \; |\xi|>1.
\end{equation}
Then, almost surely
\begin{equation}\label{ftbound}
|\hat{\mu}(\xi)|^2 \lesssim (\log |\xi|)^c |\xi|^{-\gamma}, \; \text{as} \; |\xi|\rightarrow \infty.
\end{equation}
\end{lemma}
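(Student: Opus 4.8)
The plan is to use a Borel--Cantelli / Chebyshev argument over a discrete net of frequencies, together with the moment bound \eqref{exptdecayassump}, to control $|\hat\mu(\xi)|$ uniformly. The point of allowing all $q\in\Z^+$ in the hypothesis is that we get to optimize over $q$, and the factor $q^{cq}$ forces the optimal $q$ to grow like $\log|\xi|$, which is exactly what produces the $(\log|\xi|)^c$ loss in \eqref{ftbound}.

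First I would fix a large radius $R=2^n$ and a fine net. Since $\mu$ is compactly supported (say in a ball of radius $M$, with $M$ itself possibly random but almost surely finite, so one can work on the event $\{M\le M_0\}$ and let $M_0\to\infty$), the function $\xi\mapsto\hat\mu(\xi)$ is Lipschitz with constant $\lesssim M\|\mu\|$, in fact $|\nabla\hat\mu(\xi)|\le 2\pi\int|x|\,d\mu(x)\lesssim M\|\mu\|$. So it suffices to control $|\hat\mu(\xi)|$ on a net of spacing $\sim |\xi|^{-\gamma/2-1}$ (or any fixed polynomial rate) inside the annulus $\{R\le|\xi|<2R\}$, since the Lipschitz interpolation error between net points is then negligible compared with the target bound $R^{-\gamma/2}(\log R)^{c/2}$. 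The number of net points in that annulus is polynomial in $R$, say $\le C R^{A}$ for some fixed $A=A(d,\gamma)$.

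Next, for a single net point $\xi$ with $|\xi|\sim R$, Chebyshev's inequality applied to the $2q$-th moment gives, for any threshold $\lambda>0$,
\begin{equation}
\mathbb{P}\big(|\hat\mu(\xi)|^2 > \lambda\big) = \mathbb{P}\big(|\hat\mu(\xi)|^{2q} > \lambda^q\big) \le \lambda^{-q}\,\mathbb{E}\big[|\hat\mu(\xi)|^{2q}\big] \lesssim \lambda^{-q} C^q q^{cq} |\xi|^{-\gamma q}.
\end{equation}
Choosing the threshold $\lambda = \lambda_R := K\,C\,(\log R)^{c}\,R^{-\gamma}$ for a large constant $K$, the right-hand side becomes $\lesssim \big(q^{c}/(K(\log R)^{c})\big)^{q}$. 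Now take $q = q_n := \lceil \log R\rceil = \lceil n\log 2\rceil$; then $q^c/(\log R)^c \le (1+o(1))^c$ is bounded, so the probability is $\lesssim (C'/K)^{q_n} \le (C'/K)^{\,c_0 n}$, which, for $K$ chosen large enough relative to $C'$ and the polynomial count $CR^A = C2^{An}$, is summable after multiplying by the number of net points: $\sum_n C 2^{An} (C'/K)^{c_0 n} < \infty$. By Borel--Cantelli, almost surely for all $n$ large and all net points $\xi$ in the annulus $\{2^n\le|\xi|<2^{n+1}\}$ we have $|\hat\mu(\xi)|^2 \le \lambda_{2^n} \lesssim (\log|\xi|)^c |\xi|^{-\gamma}$. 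Combining with the Lipschitz interpolation over the net and then over the dyadic annuli covering $\{|\xi|>1\}$, we obtain \eqref{ftbound}.

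The main obstacle — really the only subtle point — is the bookkeeping that ties together three competing scales: the polynomial number of net points/annulus ($2^{An}$), the geometric decay of the tail probability in $q$, and the choice $q\sim\log R$ that converts $q^{cq}$ into the polylogarithmic loss $(\log R)^{c}$. One must check that with $q=\lceil\log R\rceil$ the gain $(C'/K)^q$ genuinely beats $2^{An}$ for $K$ large but independent of $n$; this works precisely because $q\asymp n$, so $(C'/K)^q$ is itself geometric in $n$ with ratio that can be made smaller than $2^{-A}$. A secondary technical nuisance is handling the almost-sure-but-random compact support (hence the random Lipschitz constant and diameter); this is dealt with by exhausting $\Omega$ by the increasing events $\{\operatorname{supp}\mu\subset B(0,M_0),\ \|\mu\|\le M_0\}$ and noting a countable union of null sets is null. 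No new ideas beyond standard Kahane-type arguments are needed; the lemma is essentially a clean repackaging.
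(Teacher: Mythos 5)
Your argument is correct, and its core is the same as the paper's: both exploit the freedom in $q$ by taking $q\asymp\log|\xi|$, which is precisely what turns the factor $q^{cq}$ into the $(\log|\xi|)^{c}$ loss, and both first control $\hat\mu$ on a countable set of frequencies before upgrading to all $\xi$. Where you genuinely diverge is in the two surrounding technical steps. First, you run an explicit Chebyshev--Borel--Cantelli scheme with threshold $\lambda_R\sim K(\log R)^{c}R^{-\gamma}$, whereas the paper packages the same first-moment computation as the almost sure finiteness of the weighted series $\sum_{n}|\e n|^{-(d+1)}\bigl(|\hat\mu(\e n)|^{2}/(Cq_n^{c}|\e n|^{-\gamma})\bigr)^{q_n}$ with $q_n=\lfloor\log|\e n|\rfloor$; these are interchangeable. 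Second, and more substantively, your discrete-to-continuum step uses a fine, frequency-dependent net of polynomial cardinality together with the trivial Lipschitz bound $|\nabla\hat\mu(\xi)|\le 2\pi\int|x|\,d\mu(x)$, while the paper samples only on a fixed lattice $\e\Z^{d}$ chosen so that $\mathrm{supp}\,\mu$ fits in a cube of side $\e^{-1}$ and then invokes Kahane's interpolation lemma (Lemma 1, p.~252 of \cite{K1985}) to convert lattice decay into decay for all $\xi$. Your route is more elementary and self-contained, at the price of the bookkeeping you correctly identify: the geometric gain $(C'/K)^{q_n}$ with $q_n\asymp n$ must beat the polynomial net count $2^{An}$, which works for a fixed large $K$ since $A$ depends only on $d$ and $\gamma$; the paper's route avoids any counting because its lattice is coarse, but it leans on the cited interpolation result. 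Your treatment of the random support (exhaustion over the events $\{\mathrm{supp}\,\mu\subset B(0,M_0),\ \|\mu\|\le M_0\}$) plays the same role as the paper's intersection over a sequence $\e_k\to0$, and in both arguments the implicit constant in the conclusion is allowed to depend on $\omega$, which absorbs the interpolation error.
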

\begin{proof}
Let $\varepsilon>0$ and put $q_n = \left\lfloor \log |\e n|\right\rfloor$, $n\in\Z^d$. By Fubini theorem and (\ref{exptdecayassump}) we then have 
$$\mathbb{E}\sum_{n\in \Z^d\setminus\{0\}, \,|\e n| > 1} |\e n|^{-(d+1)} \left(\frac{|\hat{\mu}(\varepsilon n)|^2}{Cq_n^c |\e n|^{-\gamma}} \right)^{q_n} \leq \sum_{n\in \Z^d\setminus\{0\}, \,|\e n|>1} |\e n|^{-(d+1)} < \infty.$$
Thus, almost surely $$\lim_{|n|\rightarrow\infty} |\e n|^{-(d+1)} \left(\frac{|\hat{\mu}(\e n)|^2}{Cq_n^c |\e n|^{-\gamma}} \right)^{q_n} = 0$$ 
which means there exists $N\in\Z^+$ such that $|n|\geq N$ gives 
\begin{equation}\label{epdecay}
|\e n|^{-(d+1)} \left(\frac{|\hat{\mu}(\e n)|^2}{C q_n^c |\e n|^{-\gamma}} \right)^{q_n} < 1 \Longrightarrow |\hat{\mu}(\e n)|^2 \le C_{\omega}(\log (|\e n|))^c |\e n|^{-\gamma}, \forall n\in\N 
\end{equation}
for some random constant $C_{\omega}$. Now, let $\e_k\rightarrow 0$ and let $\Omega_k\subset \Omega$ be the event that (\ref{epdecay}) is satisfied with $\e = \e_k$. Let $\Omega' = \bigcap_{k=1}^{\infty} \Omega_k$, and see that because $\mathbb{P}(\Omega_k) = 1$, $\mathbb{P}(\Omega') = 1$. Since $\text{spt}\,\mu$ is compact, choose $\e_j$ so that $\text{spt}\,\mu \in \left(-\frac{\e^{-1}_j}{2}, \frac{\e^{-1}_j}{2}\right)^d$. Thus, because (\ref{epdecay}) holds with $\e = \e_j$, applying Lemma 1 in \cite[p.252]{K1985} completes the proof.
\end{proof}

\medskip

We also need the following theorem about image measure, also due to Kahane. 

\begin{theorem}\label{theorem_Kahane}
Let $\theta$ be a measure on $\R$ such that $\theta(I)  \lesssim |I|^{\gamma}$ for some $\gamma>0$. Let $\nu$ be the image measure of $\theta$ under Brownian motion i.e. $\nu(E) = \theta (B^{-1}(E))$. Then 
$$
{\mathbb E} \left[|\widehat{\nu}(\xi)|^{2q}\right] \le C^q q^q \cdot |\xi|^{-2\gamma q}.
$$ 
\end{theorem}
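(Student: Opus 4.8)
This is Kahane's image-measure estimate, and I would run the standard moment computation. Writing $\nu=B_{\ast}\theta$, so that $\widehat{\nu}(\xi)=\int e^{-2\pi i\xi B_t}\,d\theta(t)$, we get
$$
\E\big[\,|\widehat{\nu}(\xi)|^{2q}\,\big]=\int_{\R^{2q}}\E\Big[\exp\Big(-2\pi i\xi\big({\textstyle\sum_{j=1}^{q}B_{t_j}-\sum_{j=1}^{q}B_{s_j}}\big)\Big)\Big]\,d\theta(t_1)\cdots d\theta(s_q).
$$
The random variable $Z(\mathbf t,\mathbf s)=\sum_j B_{t_j}-\sum_j B_{s_j}$ is a centered Gaussian, so the inner expectation equals $\exp\big(-2\pi^{2}\xi^{2}\sigma^{2}(\mathbf t,\mathbf s)\big)$ with $\sigma^{2}(\mathbf t,\mathbf s)=\E[Z(\mathbf t,\mathbf s)^{2}]$; the whole task is to bound $\sigma^2$ from below in the right directions. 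First I would decompose $\R^{2q}$ into the $(2q)!$ cells determined by the relative order of the $2q$ time coordinates — the diagonal hyperplanes are $\theta^{2q}$-null since $\theta(I)\lesssim|I|^{\gamma}$ with $\gamma>0$ forces $\theta$ to be non-atomic. On a given cell, relabel the ordered times $u_1<\cdots<u_{2q}$ and put $\eta_k=+1$ if the $k$-th smallest time is some $t_j$ and $\eta_k=-1$ otherwise (there are $q$ of each). Telescoping over the independent Brownian increments gives $Z=\sum_{l=2}^{2q}c_l(B_{u_l}-B_{u_{l-1}})$ with $c_l=\sum_{k\ge l}\eta_k$, hence
$$
\sigma^{2}(\mathbf t,\mathbf s)=\sum_{l=2}^{2q}c_l^{2}\,(u_l-u_{l-1}).
$$

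The one genuinely delicate point — which I expect to be the crux — is a lower bound on the number of nonzero $c_l$. Since $c_l-c_{l+1}=\eta_l\in\{\pm1\}$ and $c_{2q+1}=0$, one has $c_l\equiv 2q+1-l\pmod 2$, so $c_l$ can vanish only at odd indices; as $c_1=\sum_k\eta_k=0$, the set $S:=\{\,2\le l\le 2q:c_l\neq 0\,\}$ satisfies $|S|\ge q$. Because $c_l^{2}\ge 1$ on $S$, on this cell $\sigma^{2}\ge\sum_{l\in S}(u_l-u_{l-1})$, and therefore
$$
\int_{\text{cell}}\exp\big(-2\pi^{2}\xi^{2}\sigma^{2}\big)\,d\theta^{q}(\mathbf t)\,d\theta^{q}(\mathbf s)\ \le\ \int_{u_1<\cdots<u_{2q}}\ \prod_{l\in S}e^{-2\pi^{2}\xi^{2}(u_l-u_{l-1})}\,d\theta(u_1)\cdots d\theta(u_{2q}).
$$

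To estimate the right-hand side I would integrate $u_{2q},u_{2q-1},\dots,u_1$ in succession, using the elementary inequality that a measure $\rho$ with $\rho(I)\le K|I|^{\gamma}$ satisfies $\int_0^{\infty}e^{-\lambda r}\,d\rho(r)\lesssim\lambda^{-\gamma}$ (decompose into the shells $[\,j/\lambda,(j+1)/\lambda)$ and sum $\sum_j e^{-j}(j+1)^{\gamma}<\infty$). Integrating out a variable $u_l$ with $l\in S$ then contributes a factor $\lesssim\xi^{-2\gamma}$ (uniformly in the as-yet-uncomputed variables, the exponential factors still involving them being $\le 1$), while integrating out a $u_l$ with $l\notin S$ contributes at most $M:=\theta(\R)$; since $|S|\ge q$, each cell contributes at most $M^{2q-|S|}C_\gamma^{|S|}\xi^{-2\gamma|S|}\le C^{q}|\xi|^{-2\gamma q}$ for $|\xi|>1$, where $C$ depends only on $\gamma$, on the implicit constant in $\theta(I)\lesssim|I|^{\gamma}$, and on $M$. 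Summing over the $(2q)!$ cells and absorbing $(2q)!$ via Stirling produces a bound of the form $\E[|\widehat{\nu}(\xi)|^{2q}]\le C^{q}q^{cq}|\xi|^{-2\gamma q}$, which is exactly the shape needed to feed into Lemma~\ref{randmeaslem}; the precise polynomial power of $q$ (here $q^{q}$) comes from a more careful count of the contributing cells, and is in any case immaterial for the subsequent application.
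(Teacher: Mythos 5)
The paper itself offers no proof of this statement: it is quoted as a result of Kahane and used as a black box (the relevant computation is in \cite{K1985}), so there is no internal argument to compare yours with. Your proposal is the classical moment-method proof of Kahane's estimate, and it is essentially correct: the expansion of the $2q$-th moment, the evaluation of the Gaussian characteristic function $\exp(-2\pi^2\xi^2\sigma^2)$, the telescoping $Z=\sum_{l\ge2}c_l(B_{u_l}-B_{u_{l-1}})$ with $c_l=\sum_{k\ge l}\eta_k$, the parity observation that every even-indexed coefficient is an odd integer (so at least $q$ gaps carry a factor), the uniform tail bound $\int_w^\infty e^{-\lambda(r-w)}\,d\theta(r)\lesssim\lambda^{-\gamma}$, and the successive integration from the largest variable down (legitimate because each intermediate bound is uniform in the not-yet-integrated variables) are all sound. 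One cosmetic remark: the statement should be read with $\theta$ finite and compactly supported, as it is in the application $\theta=dT$ on $V(J)$; your argument uses $M=\theta(\R)<\infty$ in exactly that way.

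The only place you fall short of the literal statement is the constant: summing a uniform bound over all $(2q)!$ order cells yields $C^qq^{2q}|\xi|^{-2\gamma q}$ rather than the stated $C^qq^{q}|\xi|^{-2\gamma q}$, as you yourself flag. For this paper the weaker constant is genuinely harmless: Lemma \ref{randmeaslem} accepts any factor $q^{cq}$ at the price of a power of $\log|\xi|$, so Proposition \ref{prop_vertical}, Theorem \ref{thm_Fdim} and Theorem \ref{fouriermainresult} go through unchanged (the paper makes the same point about the BDG constant). If you want the stated $q^q$, do not pay for the full ordering: your variance lower bound $\sigma^2\ge\sum_{j=1}^{q}(u_{2j}-u_{2j-1})$ depends only on the perfect matching of the $2q$ labelled variables into consecutive pairs of order statistics, not on the whole permutation. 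Decomposing the integral over the $(2q-1)!!\le(2q)^q$ perfect matchings, dropping the indicator that a given matching is the consecutive one, and noting that the resulting integrand $\prod_{\{i,j\}\in\mathcal{M}}e^{-2\pi^2\xi^2|u_i-u_j|}$ factorizes over the pairs (each pair contributing at most a constant times $\theta(\R)\,|\xi|^{-2\gamma}$ by your tail bound) gives exactly $C^qq^{q}|\xi|^{-2\gamma q}$.
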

\medskip

\subsection{Stochastic calculus.} We will need some stochastic calculus in our estimation. We refer readers to \cite{KSbook,LeGall} for more details.  Let  $(Y_t,{\mathcal F}_t)$ be a continuous semi-martingale i.e.
$$
Y_t = M_t + T_t,
$$
where $M_t$ is a continuous local martingale and $T_t$ is a continuous adapted process of bounded variation. We have the following version of It\^{o}'s formula \cite[p.149]{KSbook}.

\begin{theorem}
Let $f:\R\to\R$ be a $C^2$ function. Then almost surely we have 
$$
f(Y_t)-f(X_0) = \int_0^t f'(Y_s)dM_s +\int_0^t f'(Y_s)dT_s + \frac12 \int_0^t f''(Y_s) d\langle M\rangle_s, \ \forall t>0
$$
where $\langle M\rangle_s$ is the quadratic variation process of $M_t$. If $M_t = B_t$, then $d\langle M\rangle_s = ds$.  
\end{theorem}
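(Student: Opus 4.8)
\medskip

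\noindent\textbf{Proof proposal.} The plan is to run the classical Riemann-sum argument. First I would localize: let $V_t$ be the total variation of $T$ on $[0,t]$ and define
$$
\tau_n = \inf\{t\ge 0 : |Y_t|\vee |M_t|\vee \langle M\rangle_t \vee V_t \ge n\},
$$
which are stopping times because all four processes are continuous and adapted, and $\tau_n\uparrow\infty$ almost surely. Replacing $Y,M,T$ by the stopped processes $Y^{\tau_n},M^{\tau_n},T^{\tau_n}$, it suffices to prove the identity when $Y$ takes values in a fixed compact interval $K=[-n,n]$ and $M$, $\langle M\rangle$, $V$ are uniformly bounded; on $K$ the functions $f,f',f''$ are bounded with $f''$ uniformly continuous, say with modulus of continuity $\omega$. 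Then, for a fixed $t>0$ and a sequence of partitions $\Pi_m=\{0=t_0<\dots<t_{k_m}=t\}$ whose mesh tends to $0$, I would apply second-order Taylor's theorem on each subinterval and sum:
$$
f(Y_t)-f(Y_0)\;=\;\underbrace{\sum_k f'(Y_{t_k})\Delta Y_k}_{=:S_1^m}\;+\;\underbrace{\tfrac12\sum_k f''(Y_{t_k})(\Delta Y_k)^2}_{=:S_2^m}\;+\;\sum_k R_k^m,
$$
where $\Delta Y_k=Y_{t_{k+1}}-Y_{t_k}$ and $|R_k^m|\le \tfrac12\,\omega(|\Delta Y_k|)\,(\Delta Y_k)^2$.

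For $S_1^m$, split $\Delta Y_k=\Delta M_k+\Delta T_k$. Since $s\mapsto f'(Y_s)$ is continuous, bounded and adapted, $\sum_k f'(Y_{t_k})\Delta M_k$ converges in probability to the It\^{o} integral $\int_0^t f'(Y_s)\,dM_s$ (approximate the integrand by its left-endpoint step process and invoke the $L^2$ isometry / dominated convergence for integrals against a continuous local martingale), while $\sum_k f'(Y_{t_k})\Delta T_k\to \int_0^t f'(Y_s)\,dT_s$ by ordinary Riemann--Stieltjes convergence. For $S_2^m$, expand $(\Delta Y_k)^2=(\Delta M_k)^2+2\Delta M_k\Delta T_k+(\Delta T_k)^2$; the last two contributions vanish because
$$
\Big|\sum_k f''(Y_{t_k})\Delta M_k\Delta T_k\Big|\le \|f''\|_\infty\Big(\max_k|\Delta M_k|\Big)V_t\to 0,\qquad \Big|\sum_k f''(Y_{t_k})(\Delta T_k)^2\Big|\le \|f''\|_\infty\Big(\max_k|\Delta T_k|\Big)V_t\to 0,
$$
using the uniform continuity of $M$ and $T$ on $[0,t]$ and the boundedness of $V_t$. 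The remaining piece $\sum_k f''(Y_{t_k})(\Delta M_k)^2$ converges in probability to $\int_0^t f''(Y_s)\,d\langle M\rangle_s$. Finally $|\sum_k R_k^m|\le \tfrac12\,\omega(\max_k|\Delta Y_k|)\sum_k(\Delta Y_k)^2$, and since $Y$ is uniformly continuous on $[0,t]$ we have $\max_k|\Delta Y_k|\to 0$ while $\sum_k(\Delta Y_k)^2$ stays bounded in probability (it converges to $\langle M\rangle_t$), so $\sum_k R_k^m\to 0$ in probability.

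Passing to a subsequence of partitions along which all convergences are almost sure gives the identity at the fixed time $t$; both sides are continuous in $t$, so one null set handles all rational $t$ and hence all $t$, and letting $n\to\infty$ removes the localization. The last assertion is the classical fact that $B_t^2-t$ is a martingale (immediate from the independence and variance of Brownian increments), so by uniqueness of the Doob--Meyer decomposition $\langle B\rangle_t=t$, i.e.\ $d\langle M\rangle_s=ds$.

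I expect the main obstacle to be the convergence $\sum_k f''(Y_{t_k})(\Delta M_k)^2\to\int_0^t f''(Y_s)\,d\langle M\rangle_s$: this carries the genuine stochastic-analysis content, since it presupposes the construction of the quadratic variation process $\langle M\rangle$ (the compensator of $M^2$) and requires an $L^2$ estimate to control sums of squared martingale increments, followed by a uniform approximation of the continuous integrand $f''(Y_\cdot)$ by step processes. Everything else is bookkeeping with Taylor's theorem together with elementary estimates exploiting the bounded variation of $T$ and the continuity of $Y$.
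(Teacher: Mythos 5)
Your proposal is correct, but note that the paper does not prove this statement at all: it is quoted verbatim as a known form of It\^{o}'s formula for continuous semimartingales, with a citation to Karatzas and Shreve. Your localization--Taylor--Riemann-sum argument is essentially the classical proof given in that reference (stopping times to reduce to bounded processes, left-endpoint sums converging to the stochastic and Stieltjes integrals, cross and remainder terms killed by bounded variation and uniform continuity, and the key lemma $\sum_k f''(Y_{t_k})(\Delta M_k)^2 \to \int_0^t f''(Y_s)\,d\langle M\rangle_s$, which you correctly single out as the step carrying the real stochastic-analysis content), so there is nothing to add.
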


The term $\int_0^t f'(Y_s)dM_s$ is the stochastic integral and it. is also a continuous local martingale. For details about stochastic integrals, see \cite[Chapter 3]{KSbook}. The most important tool we need here is that the stochastic integral, as martingales, enjoys an important moment estimate, called the Burkholder-Davis-Gundy (BDG) inequality.

\begin{theorem}[Burkholder--Davis--Gundy]\label{bdg}
Let $(M_t)_{t\in\R^+}$ be a continuous local  martingale with $M_0 = 0$ almost surely and define $$M^*_t = \sup_{0\leq u \leq t} \,|M_u|.$$
Then, for any $1\leq q<\infty$
\begin{align}
\E[(M^*_t)^{2q}] \leq C_q \E[(\langle M \rangle_t)^q], \quad \forall t\in\R^+,
\end{align}
where $C_q \lesssim C^qq^{q}$ is a constant that depends only on $q\in[1,\infty)$ and $C$ is some positive constant.
\end{theorem}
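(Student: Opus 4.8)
The plan is to reduce, via localization, to a situation in which every quantity below is finite, then apply It\^{o}'s formula to the convex map $x\mapsto|x|^{2q}$ and close the estimate with Doob's $L^{2q}$ maximal inequality and H\"{o}lder's inequality, tracking the dependence on $q$ throughout. If $\E[\langle M\rangle_t^q]=\infty$ there is nothing to prove, so assume it is finite. Put $\tau_N=\inf\{s\ge 0:|M_s|\ge N\}$; this is a reducing sequence with $\tau_N\uparrow\infty$ almost surely, the stopped process $M^{\tau_N}$ is bounded, $(M^{\tau_N})^*_t\uparrow M^*_t$, and $\langle M^{\tau_N}\rangle_t=\langle M\rangle_{t\wedge\tau_N}\uparrow\langle M\rangle_t$, so it suffices to prove the inequality for bounded $M$ with a constant independent of $N$ and then pass to the limit by monotone convergence.

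Assume $M$ is bounded. It\^{o}'s formula applied to $f(x)=|x|^{2q}$, which is $C^2$ for $q\ge 1$ with $f''(x)=2q(2q-1)|x|^{2q-2}$, together with $M_0=0$, gives
$$
|M_t|^{2q}=\int_0^t 2qM_s|M_s|^{2q-2}\,dM_s+q(2q-1)\int_0^t|M_s|^{2q-2}\,d\langle M\rangle_s .
$$
The stochastic integral has bounded integrand, hence is a true martingale of mean zero, so $\E[|M_t|^{2q}]=q(2q-1)\,\E\big[\int_0^t|M_s|^{2q-2}\,d\langle M\rangle_s\big]\le q(2q-1)\,\E\big[(M^*_t)^{2q-2}\langle M\rangle_t\big]$. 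Doob's $L^{2q}$ maximal inequality gives $\E[(M^*_t)^{2q}]\le\big(\tfrac{2q}{2q-1}\big)^{2q}\E[|M_t|^{2q}]\le 2e\,\E[|M_t|^{2q}]$, and H\"{o}lder's inequality with exponents $\tfrac{q}{q-1}$ and $q$ gives $\E[(M^*_t)^{2q-2}\langle M\rangle_t]\le\E[(M^*_t)^{2q}]^{\frac{q-1}{q}}\,\E[\langle M\rangle_t^q]^{\frac1q}$ (the case $q=1$ is immediate). Writing $A=\E[(M^*_t)^{2q}]$ and $G=\E[\langle M\rangle_t^q]$, these combine to $A\le 2eq(2q-1)\,A^{\frac{q-1}{q}}G^{\frac1q}$; dividing by $A^{(q-1)/q}$ and raising to the $q$th power yields $A\le\big(2eq(2q-1)\big)^qG$. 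Letting $N\to\infty$ this establishes the inequality with $C_q\le\big(2eq(2q-1)\big)^q\lesssim C^q q^{2q}$.

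The step needing the most care is the gap between what the above argument gives, $C_q\lesssim C^q q^{2q}$, and the sharper $C_q\lesssim C^q q^q$ asserted in the theorem: the quadratic factor $q(2q-1)$ coming from $f''$ is the source of the loss, and it is not intrinsic (for $\langle M\rangle_t=t$ the It\^{o} recursion is exact and gives $\E[M_t^{2q}]=(2q-1)!!\,t^q$, which is of order $C^q q^q t^q$). To recover $C^q q^q$ one should instead use the exponential martingale $\exp\big(\lambda M_s-\tfrac{\lambda^2}{2}\langle M\rangle_s\big)$: when $\langle M\rangle$ is bounded, Doob's inequality for this positive martingale together with optimization in $\lambda$ gives the sub-Gaussian tail bound $\mathbb{P}(M^*_t\ge a)\lesssim\exp\big(-a^2/(2\|\langle M\rangle_t\|_\infty)\big)$, and integrating the tail gives $\E[(M^*_t)^{2q}]\lesssim C^q q^q\,\|\langle M\rangle_t\|_\infty^{q}$; a decomposition of the probability space along dyadic levels of $\langle M\rangle_t$ (stopping $\langle M\rangle$ the first time it reaches $2^{j}$) then replaces $\|\langle M\rangle_t\|_\infty^{q}$ by $\E[\langle M\rangle_t^q]$. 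Carrying out this last decomposition without a Cauchy--Schwarz loss that would reintroduce a higher moment of $\langle M\rangle_t$ is the genuinely delicate point. I would also note that for the applications of this theorem in the paper (Lemma \ref{randmeaslem} and its consequences) any bound of the form $C_q\lesssim C^q q^{cq}$ with a fixed $c$ already suffices, so the cruder It\^{o}-based estimate is enough there.
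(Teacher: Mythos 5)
Your localization, It\^{o}-plus-Doob argument is correct, and for the part of the theorem that the paper actually proves it is the same route the paper takes: the text invokes the computation in \cite[p.124--125]{LeGall} (It\^{o}'s formula applied to $x^{2q}$ together with Doob's submartingale inequality), arrives at the constant $D_{2q}$ with $D_p=\left(\left(\frac{p}{p-1}\right)^p\frac{p(p-1)}{2}\right)^{p/2}$, and records precisely your conclusion $C_q\le (2C)^q q^{2q}$, explicitly calling it ``a weaker bound''. Your bookkeeping is sound: after stopping so that $A=\E[(M^*_t)^{2q}]<\infty$, the chain $A\le 2eq(2q-1)A^{(q-1)/q}G^{1/q}$ with $G=\E[\langle M\rangle_t^q]$ gives $A\le\bigl(2eq(2q-1)\bigr)^q G$, and monotone convergence removes the stopping.

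Be clear, however, that this does not prove the constant $C_q\lesssim C^qq^q$ as stated in Theorem \ref{bdg}; you flag this correctly, and your proposed repair via the exponential martingale and a dyadic decomposition of $\langle M\rangle_t$ is only a sketch, with the decoupling step left open. The paper does not prove the sharp constant either: it quotes it from Pe\v{s}kir \cite[equations (2.5) and (2.23)]{P}, which give $C_q\le(\sqrt{2(4q+1)})^{2q}\lesssim 10^qq^q$; Pe\v{s}kir's argument is in the spirit of your sketch, via exponential (Orlicz-norm) estimates for stopped Brownian motion obtained through the Dambis--Dubins--Schwartz time change. Your closing observation is also exactly the paper's point: since Lemma \ref{randmeaslem} tolerates any bound of the form $C^qq^{cq}$ with fixed $c$, the weaker $q^{2q}$ constant already suffices for Proposition \ref{PropH} and the Fourier dimension results (compare the paper's footnote concerning \cite{FS2018}). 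So to justify the theorem exactly as stated you should either complete the exponential-martingale/good-$\lambda$ argument or simply cite \cite{P}; otherwise state the constant as $C^qq^{2q}$, which your argument fully establishes and which is all the applications in this paper require.
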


The constant of the BDG inequality, according to \cite[equation (2.5) and (2.23)]{P}, was 
$$
C_q \le (\sqrt{2(4q+1)})^{2q} \lesssim (10)^qq^q. 
$$
(See footnote {\footnote{In \cite{FS2018}, they forgot to raise the power $q$ obtained from (2.5) in \cite{P}. However, as we will see, this will not affect their main conclusion.}}). We can obtain a weaker bound using some standard approach. The BDG inequality in the case $q\ge 1$ above follows from It\^{o}'s formula applied to $x^{2q}$ and Doob's submartingale's inequality. For details, see \cite[p.124-125]{LeGall}. In \cite[p.125]{LeGall} with $p = 2q$, the constant was found to be 
 $$
D_p =  \left( \left(\frac{p}{p-1}\right)^p\frac{p(p-1)}{2}\right)^{p/2}
 $$
 and $C_q = D_{2q}$. Notice that $\lim\limits_{p\to\infty}\left(\frac{p}{p-1}\right)^p = e$. Therefore, for some constant $C>0$,  $D_p\le C^p (p(p-1)/2)^{p/2}$. Hence, $C_q\le (2C)^qq^{2q}$.

\section{Fourier dimension}

Let $X_t = B_{V(t)}$ where $t\in J$ be a centered continuous additive process and we will assume that $V$ is strictly increasing so that $T = V^{-1}$ and is continuous. Fix a subinterval $J$ in $[0,1]$, the push-forward of Lebesgue measure by the graph is 
$$
\mu_{\mathcal G} (E)= m \{t\in J: (t,X_t)\in E\}.
$$
Then its Fourier transform is given by 
\begin{equation}\label{fouriertransrep}
\widehat{\mu_{\mathcal G} }(\xi) = \int_J e^{-2\pi i (\xi_1 t+\xi_2 X_t)} dt. 
\end{equation}
Using the function $T(s)$, we can rewrite this as
$$
\widehat{\mu_{\mathcal G} }(\xi_1,\xi_2) = \int_{V(J)} e^{-2\pi i (\xi_1 T(s)+\xi_2 B_s)}dT(s). 
$$
Our main result in this section is 

\begin{theorem}\label{thm_Fdim}
Let $X_t = B_{V(t)}$ where $t\in J$ be the centered continuous additive process and $V$ is strictly increasing with $T = V^{-1}$.  Let $J$ be a subinterval of $[0,1].$ Suppose that 
\begin{equation}\label{eqHolder}
|T(x)-T(y)| \lesssim |x-y|^{\gamma}, \forall x,y\in J.
\end{equation}
 Then 
$$
\mbox{\rm dim}_F {\mathcal G}(X,J) \ge \frac{2\gamma}{2+\gamma}.
$$
\end{theorem}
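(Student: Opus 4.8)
The plan is to control all the moments $\E\big[|\widehat{\mu_{\mathcal G}}(\xi)|^{2q}\big]$, $q\in\Z^+$, for large $|\xi|$, and then convert these into an almost sure pointwise decay via Lemma \ref{randmeaslem}. The key point is that the two coordinate directions of $\xi=(\xi_1,\xi_2)$ are handled by completely different mechanisms: when $|\xi_2|$ is not too small we exploit the Brownian motion in the vertical variable through Kahane's image--measure theorem, and when $|\xi_1|$ dominates we use stochastic calculus (It\^o's formula together with Burkholder--Davis--Gundy) in the horizontal variable; the final bound is the better of the two, optimised over the relative size of $|\xi_1|$ and $|\xi_2|$.

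\emph{Vertical estimate.} Starting from the representation $\widehat{\mu_{\mathcal G}}(\xi_1,\xi_2)=\int_{V(J)}e^{-2\pi i\xi_1 T(s)}e^{-2\pi i\xi_2 B_s}\,dT(s)$, we read the inner integral as $\int_{V(J)}e^{-2\pi i\xi_2 B_s}\,d\theta_{\xi_1}(s)$ where $\theta_{\xi_1}$ is the \emph{deterministic} complex measure $e^{-2\pi i\xi_1 T(s)}\,dT(s)$ on $V(J)$. Its total variation is exactly the Lebesgue--Stieltjes measure $dT$, and since $T$ is increasing the H\"older hypothesis (\ref{eqHolder}) gives $(dT)(I)\lesssim |I|^{\gamma}$ for every interval $I\subset V(J)$. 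Running the proof of Theorem \ref{theorem_Kahane} with $\theta_{\xi_1}$ in place of the positive measure $\theta$ — the moment expansion only involves $|d\theta_{\xi_1}|=dT$ once absolute values are taken inside the integral — yields
$$\E\big[|\widehat{\mu_{\mathcal G}}(\xi)|^{2q}\big]\lesssim C^{q}q^{q}\,|\xi_2|^{-2\gamma q},\qquad |\xi_2|\ge 1,$$
uniformly in $\xi_1$.

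\emph{Horizontal estimate.} Here I would apply It\^o's formula to $f(Y_t)=e^{-2\pi i Y_t}$ with $Y_t=\xi_1 t+\xi_2 X_t$. By Proposition \ref{Varlemma} and Theorem \ref{dds} the process $X$ is a continuous martingale with $\langle X\rangle_t=V(t)$, so $Y$ is a continuous semimartingale whose bounded-variation part is $\xi_1 t$ and whose quadratic variation is $\xi_2^{2}V(t)$. It\^o's formula over $J=[a,b]$ gives
$$f(Y_b)-f(Y_a)=-2\pi i\xi_1\int_a^b f(Y_s)\,ds-2\pi i\xi_2\int_a^b f(Y_s)\,dX_s-2\pi^{2}\xi_2^{2}\int_a^b f(Y_s)\,dV(s).$$
Solving for $\int_a^b f(Y_s)\,ds=\widehat{\mu_{\mathcal G}}(\xi)$, bounding $\big|\int_a^b f(Y_s)\,dV(s)\big|\le V(b)-V(a)\le V(1)$ trivially, and setting $N^{*}=\sup_{a\le t\le b}\big|\int_a^t f(Y_s)\,dX_s\big|$, we obtain $|\widehat{\mu_{\mathcal G}}(\xi)|\lesssim |\xi_1|^{-1}\big(1+|\xi_2|\,N^{*}+\xi_2^{2}\big)$. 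The process $N_t=\int_a^t f(Y_s)\,dX_s$ is a continuous local martingale with $N_a=0$ and $\langle N\rangle_t=\int_a^t|f(Y_s)|^{2}\,dV(s)=V(t)-V(a)\le V(1)$, so Burkholder--Davis--Gundy (Theorem \ref{bdg}) gives $\E[(N^{*})^{2q}]\lesssim C^{q}q^{q}$, whence
$$\E\big[|\widehat{\mu_{\mathcal G}}(\xi)|^{2q}\big]\lesssim C^{q}q^{q}\Big(\frac{1+\xi_2^{2}}{|\xi_1|}\Big)^{2q},\qquad \xi_1\neq 0.$$
Taking the minimum of the two estimates and using the elementary inequality $\min\{R^{-2\gamma},(1+R^{2})^{2}P^{-2}\}\lesssim|\xi|^{-\frac{2\gamma}{2+\gamma}}$ — valid for all $\xi=(P,R)$ with $|\xi|$ large, the extremal case being $|\xi_1|\sim|\xi|$, $|\xi_2|\sim|\xi|^{1/(2+\gamma)}$, where both terms equal $|\xi|^{-\frac{2\gamma}{2+\gamma}}$ — we conclude $\E\big[|\widehat{\mu_{\mathcal G}}(\xi)|^{2q}\big]\lesssim C^{q}q^{q}|\xi|^{-\frac{2\gamma}{2+\gamma}q}$ for all $q\in\Z^{+}$ and $|\xi|>1$. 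Lemma \ref{randmeaslem}, applied with $c=1$ and exponent $\frac{2\gamma}{2+\gamma}$, then gives $|\widehat{\mu_{\mathcal G}}(\xi)|^{2}\lesssim(\log|\xi|)\,|\xi|^{-\frac{2\gamma}{2+\gamma}}$ almost surely as $|\xi|\to\infty$. Since $\mu_{\mathcal G}$ is a nonzero finite measure carried by $\mathcal{G}(X,J)$, this forces $\dim_F\mathcal{G}(X,J)\ge\frac{2\gamma}{2+\gamma}$ almost surely.

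\emph{Main obstacle.} The delicate part is the horizontal estimate, and it is exactly where the additive-process setting departs from the Brownian case of Fraser--Sahlsten. When $V(t)=t$ the It\^o correction term $\int_a^b f(Y_s)\,dV(s)$ is literally $\widehat{\mu_{\mathcal G}}(\xi)$ itself, so it can be transposed to the left-hand side and absorbed when $|\xi_1|\gg\xi_2^{2}$, producing decay of order $|\xi_2|/|\xi_1|$; for a genuinely singular increasing $V$ this term is a different quantity and only admits the crude bound $V(1)$, which is why the factor $\xi_2^{2}$ cannot be removed and the exponent degrades from (morally) $\gamma$ to $\frac{2\gamma}{2+\gamma}$. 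A secondary bookkeeping issue is that both moment bounds are naturally expressed through $|\xi_1|$ and $|\xi_2|$ separately, so the interpolation step above is needed before Lemma \ref{randmeaslem}, which is phrased in terms of $|\xi|$, can be invoked.
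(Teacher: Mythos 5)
Your argument is correct and reaches the same exponent $\frac{2\gamma}{2+\gamma}$, but it is organized differently from the paper's proof, which follows Fraser--Sahlsten more closely. The paper splits the frequency plane into cones ${\mathcal H}_{\varrho}$, ${\mathcal V}_{\varrho}$ of aperture $|\xi|^{-\varrho}$, proves $\E[|\hat{\mu}_\mathcal{G}(\xi)|^{2q}]\lesssim C^qq^q|\xi|^{-(2-2\varrho)\gamma q}$ on the vertical cone (Proposition \ref{prop_vertical}, by the same Lemma \ref{lemG}/Kahane mechanism you invoke) and $\lesssim C^qq^q|\xi|^{-(4\varrho-2)q}$ on the horizontal cone (Proposition \ref{PropH}), the latter by introducing a random time $\tau$ chosen so that the boundary terms of It\^o's formula vanish exactly, estimating $d-\tau$ and the two It\^o integrals separately, and finally optimizing $\varrho=\frac{1+\gamma}{2+\gamma}$. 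You instead apply It\^o's formula on all of $J$, keep the boundary terms (bounded by $2$, hence contributing $O(1/|\xi_1|)$), and replace the cone decomposition and the optimization in $\varrho$ by the coordinatewise interpolation $\min\{|\xi_2|^{-2\gamma},\,(1+\xi_2^2)^{2}|\xi_1|^{-2}\}\lesssim|\xi|^{-2\gamma/(2+\gamma)}$, which is a correct elementary inequality (the threshold $|\xi_2|\sim|\xi|^{1/(2+\gamma)}$ matches the paper's optimal $\varrho$). This simplification is legitimate precisely because, in this non-sharp setting, the bottleneck is the crude bound $V(1)$ on the quadratic-variation term $\int e^{iY}\,dV$, which dominates the $O(1/|\xi_1|)$ boundary contribution; the random time, which is what allows Fraser--Sahlsten to absorb that term and get the sharp Brownian result, is therefore dispensable here. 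Two cosmetic points: the BDG inequality should be applied to the real and imaginary parts of $\int f(Y_s)\,dX_s$ separately (as the paper does with $\cos$ and $\sin$), and if one uses the constant $C_q\lesssim C^qq^{2q}$ from Le Gall rather than $C^qq^q$, the exponent $c$ in Lemma \ref{randmeaslem} becomes $2$ instead of $1$, which only changes the logarithmic factor and not the dimension bound.
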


An increasing function $T$ induces a Lebesgue-Stieltjes measure, denoted by $dT$, such that $dT ([x,y]) = T(y)-T(x)$. Therefore (\ref{eqHolder}) implies that the measure $dT(I)\lesssim |I|^{\gamma}$.

\medskip

Our goal is to show that (\ref{exptdecayassump}) in Lemma \ref{randmeaslem} holds for the random measure $\mu_{\mathcal G} $. Then appealing to Lemma \ref{randmeaslem}, we will obtain our desired Fourier decay. Our approach is adapted from Fraser and Sahlsten \cite{FS2018}. We first define
\medskip

\begin{definition}[Horizontal and Vertical angles]\label{horzandvert}
Let $u>0, \varrho \in [1/2,1)$ and let $\theta_u = \min\{u^{-\varrho},\, \frac{\pi}{4}\}$. Then, we say that $\theta\in[0,2\pi)$ is a \textit{horizontal angle} if 
\begin{align}
\theta \in H_u := [0, \theta_u] \cup [\pi - \theta_u, \pi+\theta_u] \cup [2\pi-\theta_u, 2\pi)
\end{align}
and is a \textit{vertical angle} if
\begin{align}
\theta \in V_u := [0, 2\pi) \setminus H_u.
\end{align}
We now decompose $ \{\xi\in\R^2: |\xi|>1\} = {\mathcal V}_{\varrho}\cup{\mathcal H}_{\varrho}$ such that if  we write $\xi = u(\cos\theta_{\xi},\sin\theta_{\xi})$ for $\theta\in[0,2\pi)$ and $u = |\xi|$
$$
{\mathcal V}_{\varrho} = \{ \xi: \theta_{\xi}\in V_u\}, \ {\mathcal H}_{\varrho} = \{ \xi: \theta_{\xi}\in H_u\}.
$$
\end{definition}

In \cite{FS2018}, $\varrho$ was chosen to be $1/2$. In our proof, we will choose some larger $\varrho$ to obtain the necessary decay. The following is an adapted version of what appears as Lemma 3.2 in \cite{FS2018}. 

\begin{lemma}\label{trig}
Let $u>0$ and let $\theta \in H_u$ and $\phi\in V_u$. Then,
\begin{enumerate}
\item[$($I$\,)$] $|\sin\theta| \leq u^{-\varrho}$ \quad \text{and} \quad $|\cos\theta|\geq \frac{\sqrt{2}}{2}$,
\item[$($II$\,)$] $|\sin\phi| \geq \min\{\frac{2}{\pi}u^{-\varrho}, \, \frac{\sqrt{2}}{2}\}$.
\end{enumerate}
\end{lemma}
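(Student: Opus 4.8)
The statement to prove is Lemma~\ref{trig}, a purely trigonometric fact about the horizontal and vertical angle sets $H_u$ and $V_u$ introduced in Definition~\ref{horzandvert}. The plan is to unwind the definitions of $H_u$ and $V_u$ and to exploit the fact that $\theta_u = \min\{u^{-\varrho},\tfrac{\pi}{4}\}$ is small (at most $\pi/4$), together with elementary monotonicity and convexity bounds for $\sin$ and $\cos$ on $[0,\pi/4]$.

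\medskip

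\textbf{Part (I).} Suppose $\theta \in H_u$, so that $\theta$ lies within distance $\theta_u$ of one of $0$, $\pi$, or $2\pi$ (modulo $2\pi$). In all three cases the reference angle $\psi := \min\{|\theta|, |\theta-\pi|, |\theta-2\pi|\}$ satisfies $0 \le \psi \le \theta_u \le \pi/4$. Since $|\sin\theta| = \sin\psi$ and $\sin$ is increasing on $[0,\pi/4]$, we get $|\sin\theta| \le \sin\theta_u \le \theta_u \le u^{-\varrho}$, using the standard bound $\sin x \le x$ for $x\ge 0$. For the cosine, $|\cos\theta| = \cos\psi \ge \cos\theta_u \ge \cos(\pi/4) = \tfrac{\sqrt2}{2}$, because $\cos$ is decreasing on $[0,\pi/4]$ and $\psi \le \theta_u \le \pi/4$. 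This gives both inequalities in~(I).

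\medskip

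\textbf{Part (II).} Now suppose $\phi \in V_u = [0,2\pi)\setminus H_u$, so the reference angle $\psi := \min\{|\phi|,|\phi-\pi|,|\phi-2\pi|\} \in (\theta_u, \pi/2]$ and $|\sin\phi| = \sin\psi$. There are two regimes according to which term achieves the minimum defining $\theta_u$. If $\theta_u = \pi/4$ (the case $u^{-\varrho}\ge \pi/4$), then $\psi > \pi/4$, so $|\sin\phi| = \sin\psi > \sin(\pi/4) = \tfrac{\sqrt2}{2}$, which dominates the claimed lower bound. If $\theta_u = u^{-\varrho} < \pi/4$, then $\psi \in (u^{-\varrho}, \pi/2]$. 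Using concavity of $\sin$ on $[0,\pi/2]$, i.e. $\sin x \ge \tfrac{2}{\pi}x$ for $x\in[0,\pi/2]$ (the chord bound), we get $|\sin\phi| = \sin\psi \ge \tfrac{2}{\pi}\psi \ge \tfrac{2}{\pi}u^{-\varrho}$ when $\psi \le \pi/2$; note $\psi$ cannot exceed $\pi/2$ by definition of the reference angle. Combining the two regimes yields $|\sin\phi| \ge \min\{\tfrac{2}{\pi}u^{-\varrho}, \tfrac{\sqrt2}{2}\}$, which is~(II).

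\medskip

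There is no real obstacle here; the only points requiring a little care are (a) correctly identifying the reference angle in each of the disjoint pieces comprising $H_u$ (the three intervals around $0$, $\pi$, and $2\pi$), so that $|\sin\theta|$ and $|\cos\theta|$ are genuinely controlled by $\sin\psi$ and $\cos\psi$ for $\psi\in[0,\theta_u]$; and (b) keeping track of the truncation $\theta_u = \min\{u^{-\varrho},\pi/4\}$, which is why the conclusions in~(I) and~(II) involve $u^{-\varrho}$ capped against the constant $\tfrac{\sqrt2}{2}$ rather than $u^{-\varrho}$ alone. Both are handled by the elementary $\sin x \le x$ and $\tfrac{2}{\pi}x \le \sin x \le x$ inequalities on $[0,\pi/2]$ together with monotonicity of $\cos$ on $[0,\pi/4]$.
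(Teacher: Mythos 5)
Your proof is correct and follows essentially the same route as the paper: reduce by symmetry to a reference angle in $[0,\theta_u]$ (resp.\ $(\theta_u,\pi/2]$) and apply the elementary bounds $\sin x\le x$, $\sin x\ge \tfrac{2}{\pi}x$, and monotonicity of $\cos$ on $[0,\pi/4]$. The only difference is that you spell out the two regimes of $\theta_u=\min\{u^{-\varrho},\pi/4\}$ in part (II), a case split the paper leaves implicit.
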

\begin{proof}
(I): It suffices to show the inequalities for $\theta\in [0, \theta_u]$. Since $\theta_u = \min\{u^{-\varrho},\, \frac{\pi}{4}\}$, for any $\theta\in [0,\theta_u]$ we have $$|\sin\theta| \leq \theta \leq \theta_u \leq u^{-\varrho}
 \ \mbox{and} \ |\cos\theta |\geq \cos\theta_u \geq \cos\frac{\pi}{4} = \frac{\sqrt{2}}{2}.$$

(II): It suffices to show the inequality for $\phi\in (\theta_u, \frac{\pi}{2}]$. To this end, observe that $\sin\phi \geq \frac{2}{\pi} \phi$ for $\phi \in [0, \frac{\pi}{2}]$ and as a result we immediately find (II).
\end{proof}

\medskip

\subsection{Vertical angle.} Our main result in this section is  

\medskip

\begin{proposition}\label{prop_vertical} Let $\mu$ be a finite Borel measure on $[0,1]$ and suppose that $T$ satisfies (\ref{eqHolder}).  Then 
$$
{\mathbb E} [|\widehat{\mu_{\mathcal G}}(\xi)|^{2q}]  \  \lesssim \  C^q q^q |\xi|^{-(2-2\varrho)\gamma q} 
$$
for all $\xi\in {\mathcal V}_{\varrho}$.
\end{proposition}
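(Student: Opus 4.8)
The goal is to bound $\mathbb{E}[|\widehat{\mu_{\mathcal G}}(\xi)|^{2q}]$ for $\xi$ whose direction is "vertical", meaning the vertical component $\xi_2$ dominates in a quantitative way ($|\sin\theta_\xi| \gtrsim u^{-\varrho}$ by Lemma \ref{trig}(II)). The starting point is the representation
$$
\widehat{\mu_{\mathcal G}}(\xi) = \int_{V(J)} e^{-2\pi i(\xi_1 T(s) + \xi_2 B_s)}\, dT(s),
$$
so $|\widehat{\mu_{\mathcal G}}(\xi)|^{2q}$ is a $2q$-fold integral over $(V(J))^{2q}$ of $\mathbb{E}[\exp(-2\pi i \xi_2 \sum_{k=1}^{2q}\pm B_{s_k})]$ times a phase factor of modulus $1$ coming from the $T(s_k)$'s. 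The idea is that for a vertical $\xi$, the Gaussian oscillation in the $\xi_2 B_s$ term is enough on its own: after ordering the $s_k$'s, the increments of Brownian motion are independent, and integrating out the Gaussian produces exponential decay in $|\xi_2|^2$ times gaps between consecutive $s_k$'s. This is exactly the mechanism behind Theorem \ref{theorem_Kahane}, so the cleanest route is to reduce directly to that theorem rather than redo the Gaussian computation by hand.

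\textbf{Key steps.} First, fix $\xi_1$ and regard $e^{-2\pi i \xi_1 T(s)}\,dT(s)$ as a (complex, but bounded-variation) measure $\theta_{\xi_1}$ on $V(J)$; since $dT(I)\lesssim |I|^\gamma$ by the remark after Theorem \ref{thm_Fdim}, we have the total-variation bound $|\theta_{\xi_1}|(I)\lesssim |I|^\gamma$ uniformly in $\xi_1$. Second, observe that $\widehat{\mu_{\mathcal G}}(\xi) = \widehat{\nu_{\xi_1}}(\xi_2)$ where $\nu_{\xi_1}$ is the pushforward of $\theta_{\xi_1}$ under Brownian motion $B$ (up to the harmless normalization by $2\pi$). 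Theorem \ref{theorem_Kahane} is stated for nonnegative $\theta$, but its proof only uses the measure of intervals and the independence of Brownian increments, so it applies verbatim to the total variation measure $|\theta_{\xi_1}|$ and yields $\mathbb{E}[|\widehat{\nu_{\xi_1}}(\eta)|^{2q}] \le C^q q^q |\eta|^{-2\gamma q}$ for all $|\eta|>1$, with the constant independent of $\xi_1$. Third, apply this with $\eta = \xi_2$: for a vertical $\xi$, Lemma \ref{trig}(II) gives $|\xi_2| = |\xi||\sin\theta_\xi| \gtrsim |\xi|^{1-\varrho}$ (when $|\xi|$ is large, so that the $\min$ is attained by the first term), hence $|\xi_2|^{-2\gamma q} \lesssim |\xi|^{-(2-2\varrho)\gamma q}$ and we obtain
$$
\mathbb{E}[|\widehat{\mu_{\mathcal G}}(\xi)|^{2q}] = \mathbb{E}[|\widehat{\nu_{\xi_1}}(\xi_2)|^{2q}] \lesssim C^q q^q |\xi_2|^{-2\gamma q} \lesssim C^q q^q |\xi|^{-(2-2\varrho)\gamma q},
$$
which is the claim. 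One must be slightly careful at the range boundary where $|\xi_2|\le 1$ or where the $\min$ in $\theta_u$ is attained by $\pi/4$; but for $|\xi|$ large enough all of these degenerate cases disappear, and the bound for bounded $|\xi|$ is trivial since $|\widehat{\mu_{\mathcal G}}|$ is bounded by the total mass.

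\textbf{Main obstacle.} The only real subtlety is justifying that Theorem \ref{theorem_Kahane}, and hence the whole argument, survives the passage from a genuine measure $\theta$ to the complex measure $e^{-2\pi i\xi_1 T(s)}dT(s)$ with uniformity in $\xi_1$. Writing $\widehat{\mu_{\mathcal G}}(\xi)\overline{\widehat{\mu_{\mathcal G}}(\xi)}$ expanded to the $q$-th power gives a sum over sign patterns $\varepsilon \in \{+1,-1\}^{2q}$ of terms $\int \mathbb{E}[e^{-2\pi i\xi_2\sum \varepsilon_k B_{s_k}}]\prod e^{-2\pi i\xi_1\varepsilon_k T(s_k)}\,\prod dT(s_k)$, and the bound should come from taking absolute values inside, reducing to the identical estimate with $dT$ in place of the complex measure and all $\varepsilon_k$ of one sign handled by the combinatorial ordering argument in the proof of Theorem \ref{theorem_Kahane}. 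I expect this to go through with essentially no new ideas, so the proof is short; the bulk of the work was already front-loaded into Theorem \ref{theorem_Kahane} and Lemma \ref{trig}. The harder half of the Fourier-decay argument is the horizontal case, which is deferred to the next subsection and genuinely needs Itô's formula and the BDG inequality.
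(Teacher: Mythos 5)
Your proposal is correct and is essentially the paper's own argument: expanding the $2q$-th moment and taking absolute values inside (legitimate because the Gaussian expectation $\E[e^{-2\pi i \xi_2\sum \pm B_{s_k}}]$ is nonnegative) drops the horizontal phase and reduces the bound to Theorem \ref{theorem_Kahane} applied to the genuine positive measure $dT$ with $dT(I)\lesssim |I|^{\gamma}$ — the paper packages exactly this reduction as Lemma \ref{lemG} — after which Lemma \ref{trig}(II) gives $|\xi_2|\gtrsim |\xi|^{1-\varrho}$ on ${\mathcal V}_{\varrho}$ and hence the stated decay. The "complex measure $\theta_{\xi_1}$" framing is cosmetic and, as you note in your final paragraph, collapses to the same computation, so no new ideas are needed.
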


\medskip

This proposition follows from the following lemma. 
\begin{lemma}\label{lemG}
Let $\xi = (\xi_1, \xi_2)\in \R^2$. Then, for any $q\in \N$
\begin{equation}
\mathbb{E}\left[\left|\hat{\mu}_\mathcal{G}(\xi)\right|^{2q}\right] \leq \mathbb{E}\left[\left|\hat{\nu}(\xi_2) \right|^{2q} \right].
\end{equation}
where $\nu = \mu \circ X^{-1}$
\end{lemma}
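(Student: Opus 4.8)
\textbf{Proof proposal for Lemma \ref{lemG}.}

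The plan is to compare the two-dimensional oscillatory integral defining $\widehat{\mu}_{\mathcal G}(\xi)$ with the one-dimensional integral defining $\widehat{\nu}(\xi_2)$, and then apply the power-mean (or Jensen) inequality to pass to the $2q$-th moment. First I would write out, using \eqref{fouriertransrep} and the change of variables $s = V(t)$, $t = T(s)$,
$$
\widehat{\mu}_{\mathcal G}(\xi_1,\xi_2) = \int_{V(J)} e^{-2\pi i \xi_1 T(s)}\, e^{-2\pi i \xi_2 B_s}\, dT(s),
$$
while $\widehat{\nu}(\xi_2) = \int_{V(J)} e^{-2\pi i \xi_2 B_s}\, dT(s)$ since $\nu = \mu\circ X^{-1}$ is the push-forward of $dT$ (equivalently $\mu$) under $s\mapsto B_s$. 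The key observation is that the extra factor $e^{-2\pi i \xi_1 T(s)}$ has modulus $1$, so it only rotates the integrand pointwise; it cannot increase the integral. Making this precise is the crux: I would condition on the Brownian path $B$, so that $s\mapsto B_s$ is a fixed function, and then the claim becomes a deterministic statement about complex-valued integrals against the positive measure $dT$.

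The deterministic heart of the argument is the following: if $g(s)$ is a fixed measurable function with $|g(s)| = 1$ and $h(s)$ is any complex-valued function, then $\left|\int g(s) h(s)\, dT(s)\right|^{2q}$ need not be bounded by $\left|\int h(s)\, dT(s)\right|^{2q}$ \emph{pointwise} — but its \emph{expectation} is. The clean way is to expand the $2q$-th power as a $2q$-fold integral: writing $F(\xi) = \widehat{\mu}_{\mathcal G}(\xi)$ and $|F|^{2q} = F^q \overline{F}^q$, we get
$$
\mathbb{E}\left[|\widehat{\mu}_{\mathcal G}(\xi)|^{2q}\right] = \int_{V(J)^{2q}} e^{-2\pi i \xi_1 (T(s_1)+\cdots+T(s_q)-T(s_{q+1})-\cdots-T(s_{2q}))}\, \mathbb{E}\!\left[ e^{-2\pi i \xi_2 (B_{s_1}+\cdots - B_{s_{2q}})}\right] \prod_{j=1}^{2q} dT(s_j),
$$
and similarly for $\mathbb{E}[|\widehat{\nu}(\xi_2)|^{2q}]$ with the $\xi_1$-phase factor absent. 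Now the point is that $\mathbb{E}[e^{-2\pi i \xi_2(B_{s_1}+\cdots-B_{s_{2q}})}]$ is a \emph{nonnegative real} number for every choice of $(s_1,\dots,s_{2q})$: indeed the increments of Brownian motion are jointly Gaussian with zero mean, so $B_{s_1}+\cdots+B_{s_q}-B_{s_{q+1}}-\cdots-B_{s_{2q}}$ is a centered Gaussian and its characteristic function $\mathbb{E}[e^{-2\pi i \xi_2(\cdots)}] = e^{-2\pi^2 \xi_2^2 \sigma^2}$ is real and $\ge 0$. Therefore, bounding the $\xi_1$-phase factor by its modulus $1$ inside the $2q$-fold integral gives exactly
$$
\mathbb{E}\left[|\widehat{\mu}_{\mathcal G}(\xi)|^{2q}\right] \le \int_{V(J)^{2q}} \mathbb{E}\!\left[ e^{-2\pi i \xi_2 (B_{s_1}+\cdots - B_{s_{2q}})}\right] \prod_{j=1}^{2q} dT(s_j) = \mathbb{E}\left[|\widehat{\nu}(\xi_2)|^{2q}\right].
$$

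I expect the main (minor) obstacle to be verifying carefully that the expectation can be moved inside the $2q$-fold integral — a Fubini/Tonelli justification, which is routine since $dT$ is a finite positive measure on the bounded set $V(J)$ and the integrand is bounded by $1$ in modulus — and that the push-forward identity $\widehat{\nu}(\xi_2) = \int_{V(J)} e^{-2\pi i \xi_2 B_s}\, dT(s)$ is set up correctly given the definitions $\mu_{\mathcal G}(E) = m\{t\in J:(t,X_t)\in E\}$ and $\nu = \mu\circ X^{-1}$ (here $\mu$ should be taken as Lebesgue measure $m$ restricted to $J$, so that $\mu_{\mathcal G}$ is its push-forward under $t\mapsto(t,X_t)$ and $\nu$ is its push-forward under $t\mapsto X_t = B_{V(t)}$). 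Once those bookkeeping points are settled, the inequality is immediate from the nonnegativity of the Gaussian characteristic function. With Lemma \ref{lemG} in hand, Proposition \ref{prop_vertical} follows by applying Theorem \ref{theorem_Kahane} to $\theta = \nu = dT$ (which satisfies $\theta(I)\lesssim|I|^\gamma$ by \eqref{eqHolder}), but restricting attention to $\xi\in\mathcal{V}_\varrho$, where $|\xi_2| = |\xi|\,|\sin\theta_\xi| \gtrsim |\xi|^{1-\varrho}$ by Lemma \ref{trig}(II), which converts the decay $|\xi_2|^{-2\gamma q}$ into $|\xi|^{-(2-2\varrho)\gamma q}$.
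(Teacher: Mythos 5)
Your proposal is correct and follows essentially the same argument as the paper: expand the $2q$-th moment into a $2q$-fold iterated integral, observe that the inner expectation is the characteristic function of a centered Gaussian and hence nonnegative, and bound the unit-modulus $\xi_1$-phase by $1$. The only cosmetic difference is that you first change variables to $s=V(t)$ and integrate against $dT(s)$ over $V(J)$, whereas the paper works directly in the original time variable against Lebesgue measure, which is immaterial.
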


\begin{proof}
By expanding the $2q$-moment into iterated integral, we see that
\begin{align*}
&\mathbb{E}\left[\left|\hat{\mu}_\mathcal{G}(\xi_1, \xi_2)\right|^{2q}\right] \\
&= \idotsint\limits_{[0,1]^{2q}} e^{-2\pi i \xi_1 \left(\sum_{k=1}^{q}(s_k - s'_k) \right)}\,\mathbb{E}\left[e^{-2\pi i \xi_2 \left(\sum_{k=1}^{q} (X_{s_k} - X_{s'_k})\right)}\right] \, ds_1\ldots ds_q ds'_1 \ldots ds'_q.
\end{align*}
Further, because $-2\pi \left(\sum_{k=1}^{q} (X_{s_k} - X_{s'_k})\right)$ is a Gaussian random variable, the expectation in the integrand is positive. We may take the absolute value of both sides of the above to find 
\begin{align*}
\mathbb{E}\left[\left|\hat{\mu}_\mathcal{G}(\xi_1, \xi_2)\right|^{2q}\right] \leq &  \idotsint\limits_{[0,1]^{2q}} \mathbb{E}\left[e^{-2\pi i \xi_2 \left(\sum_{k=1}^{q} (X_{s_k} - X_{s'_k})\right)}\right] \, ds_1\ldots ds_p ds'_1 \ldots ds'_p \\
=& \mathbb{E}\left[\left|\hat{\nu}(\xi_2) \right|^{2q} \right],
\end{align*}
completing the proof.
\end{proof}

\medskip

\noindent{\it Proof of Proposition \ref{prop_vertical}.} We first note that 
$$
\widehat{\nu}(\xi_2) =\int_{c}^{d} e^{-2\pi i \xi_2 X_t}dt  = \int_{0}^{V(1)} e^{-2\pi i \xi_2 B_s}dT(s).
$$
  Using Lemma {\ref{lemG}} and Theorem \ref{theorem_Kahane} with $\theta = dT(s)$, we have that
$$
{\mathbb E} [|\widehat{\mu_{\mathcal G}}(\xi)|^{2q}] \le  C^q q^q |\xi_2|^{-2\gamma q}.
$$
If $\xi\in {\mathcal V}_{\varrho}$, from Lemma \ref{trig} (ii), we have two cases (i) $|\sin\theta_{\xi}| \ge \frac{2}{\pi}u^{-\varrho}$ and (ii) $|\sin \theta_{\xi}| \ge \sqrt{2}/2$. In the first case, 
$$
\begin{aligned}
{\mathbb E} [|\widehat{\mu_{\mathcal G}}(\xi)|^{2q}] \le & C^q q^q \cdot |\xi_2|^{-2\gamma q}\\ 
\lesssim&  C^q q^q |\xi|^{(2\varrho-2)\gamma q} .\\
\end{aligned}
$$
In the second case, we have 
$$
{\mathbb E} [|\widehat{\mu_{\mathcal G}}(\xi)|^{2q}] \lesssim  C^q q^q  |\xi|^{-2\gamma q}. 
$$
As $\varrho\in(0,1)$, $|\xi|^{2\varrho-2}>|\xi|^{-2}$ for all $|\xi|>1$, so our proposition follows.  \qquad$\Box$

\medskip
   
\subsection{Horizontal angle.} We now estimate the horizontal angle cases $\xi\in {\mathcal H}_{\varrho}$.  Let us define
$$
Z_t = \xi_1 t+\xi_2 X_t
$$
and 
$$
Y_t =-2\pi ( \xi_1 t+ \xi_2 X_t )= -2\pi  Z_t
$$
where $\xi = u (\cos\theta,\sin\theta)$ for $u= |\xi|$. Define now the random time $\tau$. 

\begin{definition}\label{randomt}
Let $\xi = u(\cos\theta, \sin\theta)\in \R^2\setminus\{0\}$ with $\theta\in H_u$ and let $J = [c,d]$ be a subinterval of $[0,1]$. Then, we define the random time $\tau=\tau_\omega(\xi)\in J$ as
\begin{equation}
\tau = \min \{t\in J\,:\, Y_t = \kappa^*\},
\end{equation}
where
\[
\kappa^* =  
\begin{cases} 
	-2\pi \left\lceil   Z_d-Z_c \right\rceil +Y_c, & \text{if}\; Y_d \ge Y_c, \\ 
	-2\pi \left\lfloor  Z_d-Z_c   \right\rfloor +Y_c, & \text{if}\; Y_d <  Y_c. 
\end{cases}
\]
\end{definition}
In the above, $\lceil \cdot \rceil$ and $\lfloor\cdot\rfloor$ denotes the upper and lower floor functions. 

\begin{lemma}
The random time $\tau$ exists almost surely. 
\end{lemma}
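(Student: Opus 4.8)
The plan is to show that the continuous process $t\mapsto Y_t$ takes the value $\kappa^*$ somewhere on $J=[c,d]$, so that the minimum defining $\tau$ is over a nonempty compact set and is attained. Since $t\mapsto X_t$ is continuous almost surely (by assumption $X$ is a continuous additive process), the map $t\mapsto Y_t = -2\pi(\xi_1 t + \xi_2 X_t)$ is continuous on $[c,d]$ almost surely, and likewise $t\mapsto Z_t=\xi_1t+\xi_2X_t$. Fix a sample path for which this continuity holds.

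First I would treat the case $Y_d\ge Y_c$. Here $\kappa^* = -2\pi\lceil Z_d - Z_c\rceil + Y_c$. Note $Y_d - Y_c = -2\pi(Z_d-Z_c)$, and since $Y_d\ge Y_c$ we have $Z_d-Z_c\le 0$, hence $\lceil Z_d-Z_c\rceil \le 0$ and $-2\pi\lceil Z_d-Z_c\rceil \ge 0$. On the other hand $\lceil Z_d-Z_c\rceil \ge Z_d-Z_c$, so $-2\pi\lceil Z_d-Z_c\rceil \le -2\pi(Z_d-Z_c) = Y_d-Y_c$. Therefore
$$
Y_c \;=\; -2\pi\cdot 0 + Y_c \;\le\; \kappa^* \;=\; -2\pi\lceil Z_d-Z_c\rceil + Y_c \;\le\; (Y_d-Y_c)+Y_c \;=\; Y_d,
$$
so $\kappa^*\in[Y_c,Y_d]$. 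By the intermediate value theorem applied to the continuous function $Y$ on $[c,d]$, there exists $t\in[c,d]$ with $Y_t=\kappa^*$; the set $\{t\in J: Y_t=\kappa^*\}$ is then nonempty and, being the preimage of a closed set under a continuous map, closed and bounded, hence compact, so its minimum exists. The case $Y_d<Y_c$ is symmetric: then $Z_d-Z_c>0$, so $\lfloor Z_d-Z_c\rfloor\ge 0$ gives $-2\pi\lfloor Z_d-Z_c\rfloor\le 0$, i.e. $\kappa^*\le Y_c$, while $\lfloor Z_d-Z_c\rfloor\le Z_d-Z_c$ gives $-2\pi\lfloor Z_d-Z_c\rfloor\ge Y_d-Y_c$, i.e. $\kappa^*\ge Y_d$; hence $\kappa^*\in[Y_d,Y_c]$ and the intermediate value theorem again yields a point where $Y$ attains $\kappa^*$. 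In both cases $\tau$ is well-defined, and since all of this holds on the almost-sure event of path-continuity, $\tau$ exists almost surely.

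There is essentially no serious obstacle here: the only thing to be careful about is the sign bookkeeping with the floor/ceiling functions and the factor $-2\pi$, which I would present cleanly by isolating the inequality $\kappa^*$ lies between $Y_c$ and $Y_d$ in each case, after which the intermediate value theorem finishes the argument. One small remark worth including is why the minimum (as opposed to just an infimum) is attained — namely that $Y^{-1}(\{\kappa^*\})\cap J$ is compact — but this is immediate from continuity of $Y$ and compactness of $J$.
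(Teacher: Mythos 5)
Your proposal is correct and follows essentially the same route as the paper: show $\kappa^*$ lies between $Y_c$ and $Y_d$ using the sign bounds on the ceiling (resp.\ floor) of $Z_d-Z_c$, then invoke the intermediate value theorem via almost-sure continuity of the sample paths. Your added remark that the level set $\{t\in J: Y_t=\kappa^*\}$ is compact, so the minimum (not just an infimum) is attained, is a small but welcome refinement the paper leaves implicit.
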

\begin{proof}
Suppose that $Y_d \ge Y_c$.  Then $Z_d-Z_c \le 0$.  This implies that $0\ge \left\lceil  Z_d-Z_c \right\rceil \ge Z_d-Z_c$. Hence, 
$$
0\le -2\pi \left\lceil  Z_d-Z_c \right\rceil \le Y_d-Y_c.
$$
Thus, $\kappa^{\ast}\in [Y_c,Y_d]$. By the intermediate value theorem, as $X_t$ has continuous sample paths almost surely, $\tau$ exists almost surely. In a similar consideration, the second case also holds.  
\end{proof}
%

We now make use of $\tau$ via decomposing $\hat{\mu}_\mathcal{G}(\xi)$ using (\ref{fouriertransrep}) in order to write
\begin{equation}\label{eqmu_dec}
\hat{\mu}_\mathcal{G}(\xi) = \underbrace{\int_{c}^{\tau} e^{i Y_t}\,dt}_{\hat{\mu}_\mathcal{G}{\big|}_{[c, \tau]}(\xi)} + \underbrace{\int_{\tau}^{d} e^{i Y_t}\,dt}_{\hat{\mu}_\mathcal{G}{\big|}_{\tau,d]}(\xi)}, \quad \xi 
\in{\mathcal H}_{\varrho}
\end{equation}
which then gives $|\hat{\mu}_\mathcal{G}(\xi)| \leq \left| \int_{c}^{\tau} e^{i Y_t}\,dt\right| + \left|\int_{\tau}^{d} e^{i Y_t}\,dt\right|$. This reduces bounding $|\hat{\mu}_\mathcal{G}(\xi)|$ to bounding $|\hat{\mu}_\mathcal{G}{\big|}_{[c,\tau]}(\xi)|$ and $|\hat{\mu}_\mathcal{G}{\big|}_{[\tau,d]}(\xi)|$ individually.  

\medskip

\begin{proposition}\label{PropH} With respect to the above notations, there exists a constant $C> 0$ such that for all $\xi\in{\mathcal H}_{\varrho}$, 
\begin{equation}
\E\left[|\hat{\mu}_\mathcal{G}(\xi)|^{2q}\right] \lesssim C^q q^q |\xi|^{-(4\varrho -2)q}
\end{equation}
for some $C>0$ depending only on $V$. 
\end{proposition}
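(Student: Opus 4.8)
The plan is to bound the two pieces $\hat\mu_\mathcal{G}\big|_{[c,\tau]}(\xi)$ and $\hat\mu_\mathcal{G}\big|_{[\tau,d]}(\xi)$ in (\ref{eqmu_dec}) separately, since $\E[|\hat\mu_\mathcal{G}(\xi)|^{2q}]\lesssim 2^{2q}\big(\E[|\hat\mu_\mathcal{G}\big|_{[c,\tau]}(\xi)|^{2q}]+\E[|\hat\mu_\mathcal{G}\big|_{[\tau,d]}(\xi)|^{2q}]\big)$ and the factor $2^{2q}$ is harmless against the $C^q q^q$ form. First I would treat the integral over $[c,\tau]$. The defining property of $\tau$ is that $Y_\tau=\kappa^\ast$ and $\kappa^\ast-Y_c$ is an integer multiple of $-2\pi$, i.e. the total phase $Z$ traverses an integer number of full cycles from $c$ to $\tau$; combined with the fact that $\tau$ is the \emph{first} such time, the map $t\mapsto Z_t$ restricted to $[c,\tau]$ stays within one cycle of a winding and one expects strong cancellation in $\int_c^\tau e^{iY_t}\,dt$. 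The cleanest way to extract this is to change variables using the phase, or equivalently to write $e^{iY_t}$ as an exact differential up to correction terms. This is where It\^o's formula enters: with $Y_t=-2\pi(\xi_1 t+\xi_2 X_t)$ and $X_t=B_{V(t)}$, apply It\^o's formula to $f(Y_t)$ with $f(x)=e^{ix}$ (real and imaginary parts treated separately to stay with $C^2$ real functions, or formally). Since $Z_t=\xi_1 t+\xi_2 X_t$ is a continuous semimartingale with drift part $\xi_1\,dt$ and martingale part $\xi_2\,dX_t$ having quadratic variation $\xi_2^2\,dV(t)$, one gets
\begin{equation*}
e^{iY_d}-e^{iY_c}=\int_c^d i\,e^{iY_t}\,dY_t-\tfrac12\int_c^d e^{iY_t}\,d\langle Y\rangle_t,
\end{equation*}
and similarly over $[c,\tau]$. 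Solving for the ``drift'' piece $\int (-2\pi i\xi_1)e^{iY_t}\,dt$ shows that $\int_c^\tau e^{iY_t}\,dt$ equals, up to a factor $\tfrac{1}{2\pi\xi_1}$, a boundary term $e^{iY_\tau}-e^{iY_c}$ (which is $O(1)$, and in fact vanishes by the choice of $\kappa^\ast$ making $e^{iY_\tau}=e^{iY_c}$) plus a stochastic integral $\int_c^\tau e^{iY_t}(-2\pi i\xi_2)\,dB_{V(t)}$ plus a bounded-variation remainder $\tfrac12\int_c^\tau e^{iY_t}(2\pi\xi_2)^2\,dV(t)$. Here the horizontal assumption $\xi\in\mathcal H_\varrho$ is crucial: Lemma \ref{trig}(I) gives $|\xi_1|=|\xi||\cos\theta|\ge\tfrac{\sqrt2}{2}|\xi|$, so dividing by $\xi_1$ produces the decay, while $|\xi_2|=|\xi||\sin\theta|\le|\xi|^{1-\varrho}$ controls the size of the two correction terms.

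Next I would take $2q$-th moments. The boundary term contributes $O(1)$ after the $\tfrac{1}{2\pi|\xi_1|}$ factor, hence $\lesssim |\xi|^{-2q}$, which is stronger than $|\xi|^{-(4\varrho-2)q}$ whenever $\varrho<1$. For the stochastic-integral term, $M_t=\int_c^{t\wedge\tau} e^{iY_s}(-2\pi i\xi_2)\,dB_{V(s)}$ is a continuous local martingale with $M_c=0$ and quadratic variation $\langle M\rangle_d=\int_c^\tau(2\pi\xi_2)^2|e^{iY_s}|^2\,dV(s)=(2\pi\xi_2)^2(V(\tau)-V(c))\le(2\pi\xi_2)^2 V(1)$; applying the Burkholder--Davis--Gundy inequality (Theorem \ref{bdg}) gives $\E[(M^\ast_d)^{2q}]\le C_q\,(2\pi\xi_2)^{2q}V(1)^q\lesssim C^q q^q|\xi_2|^{2q}$, and after dividing by $|\xi_1|^{2q}\gtrsim|\xi|^{2q}$ this is $\lesssim C^q q^q|\xi_2|^{2q}|\xi|^{-2q}\le C^q q^q|\xi|^{-2\varrho q}$, again better than the claimed exponent since $2\varrho\ge 4\varrho-2\iff \varrho\le 1$. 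For the $dV$ remainder term, $\big|\tfrac{1}{2\pi\xi_1}\cdot\tfrac12\int_c^\tau e^{iY_t}(2\pi\xi_2)^2\,dV(t)\big|\lesssim \tfrac{\xi_2^2}{|\xi_1|}V(1)\lesssim |\xi|^{2(1-\varrho)}|\xi|^{-1}=|\xi|^{1-2\varrho}$ deterministically, so its $2q$-th moment is $\lesssim|\xi|^{(1-2\varrho)2q}=|\xi|^{(2-4\varrho)q}$, which is exactly the target rate $|\xi|^{-(4\varrho-2)q}$. This last term is therefore the bottleneck that fixes the exponent. The integral over $[\tau,d]$ is handled identically: by the same construction $Z_d-Z_\tau$ is again an integer (the fractional part having been absorbed into the $[c,\tau]$ piece by the ceiling/floor choice), so $e^{iY_d}=e^{iY_\tau}$ and the identical It\^o decomposition applies with the same bounds; one may need to invoke the strong Markov / optional stopping property so that BDG applies to the shifted martingale started at the stopping time $\tau$, which is standard.

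Assembling the three contributions, the dominant one is $C^q q^q|\xi|^{-(4\varrho-2)q}$, giving the statement. The hard part is making the It\^o-formula step fully rigorous at a \emph{random} upper limit $\tau$: one must justify It\^o's formula stopped at $\tau$ (Theorem \ref{bdg} and the stopped version of It\^o's formula both require $\tau$ to be a stopping time with respect to the filtration $(\mathcal F_t)$, which it is, being a first-hitting time of the continuous adapted process $Y$ at a level $\kappa^\ast$ that is itself $\mathcal F_d$-measurable — so some care is needed, e.g. conditioning on $\kappa^\ast$ or approximating), and one must verify that the complex exponential can be fed into the real It\^o formula (Theorem stated in \S4.2) by splitting into $\cos$ and $\sin$. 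A secondary technical point is the passage from the local-martingale $M$ to the BDG bound on $\E[(M^\ast_d)^{2q}]$, which needs $\langle M\rangle_d$ bounded — true here since $V(1)<\infty$ — and the observation that $|e^{iY_s}|=1$ keeps the integrand bounded. None of this changes the exponents; the value $4\varrho-2$ is pinned down entirely by the elementary estimate $\xi_2^2/|\xi_1|\lesssim|\xi|^{1-2\varrho}$ on the bounded-variation correction term.
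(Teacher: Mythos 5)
Your treatment of the main piece $\int_c^\tau e^{iY_t}\,dt$ is essentially the paper's own argument: It\^o's formula applied to $e^{iY}$, the boundary term annihilated by the choice of $\kappa^\ast$, BDG on the stochastic integral, and the deterministic bound $\xi_2^2/|\xi_1|\lesssim|\xi|^{1-2\varrho}$ on the $dV$-correction, which is exactly what pins down the exponent $4\varrho-2$. However, two steps are flawed. First, your claim that $Z_d-Z_\tau$ is an integer (so that $e^{iY_d}=e^{iY_\tau}$ and the $[\tau,d]$ piece closes with a vanishing boundary term) is false: by the definition of $\tau$ it is $Z_\tau-Z_c=\lceil Z_d-Z_c\rceil$ (resp.\ the floor) that is an integer, so $Z_d-Z_\tau$ is precisely the leftover fractional part, of modulus at most $1$. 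This is repairable --- the boundary term is bounded by $2$ and, after division by $|\xi_1|\gtrsim|\xi|$, contributes $\lesssim|\xi|^{-2q}\le|\xi|^{-(4\varrho-2)q}$ --- but the paper does something simpler and avoids It\^o on $[\tau,d]$ entirely: the same inequality $|Z_\tau-Z_d|\le1$ together with Lemma \ref{trig} forces $d-\tau\lesssim(M+1)|\xi|^{-\varrho}$ with $M=\max_{t\in[0,1]}|X_t|$, so $|\int_\tau^d e^{iY_t}dt|\le d-\tau$, and Gaussian moment estimates for $M$ give $C^qq^q|\xi|^{-2\varrho q}$ directly.

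The more substantive gap is the martingale step. The time $\tau$ is \emph{not} a stopping time: $\kappa^\ast$ depends on $Y_d$, hence on the terminal value $X_d$, so your process $M_t=\int_c^{t\wedge\tau}e^{iY_s}(-2\pi i\xi_2)\,dB_{V(s)}$ is not a (local) martingale, and Theorem \ref{bdg} cannot be applied to it as written; your suggested remedies (conditioning on $\kappa^\ast$, or approximation) do not obviously work, since conditioning on $\kappa^\ast$ destroys the Brownian structure. (Your parallel worry about It\^o's formula is unnecessary: the formula holds almost surely simultaneously for all times, so it may simply be evaluated at the random time $\tau$; no stopping-time property is needed there.) The paper's fix is the natural one: bound the stochastic integral evaluated at the random time pathwise by $\sup_{t\in[0,V(1)]}\bigl|\int_{V(c)}^{t}f(s)\,dB_s\bigr|$, where $f$ is the (real, after splitting into cosine and sine) integrand extended by $0$ below $V(c)$, and apply BDG to this genuine martingale and its maximal function over the deterministic horizon $[0,V(1)]$, whose quadratic variation is at most $V(1)$. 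Since your own estimate already invokes the maximal function $M^\ast_d$, the repair is simply to drop the $\wedge\tau$ truncation and dominate the stopped value by the supremum; with that change and the corrected boundary term on $[\tau,d]$, your exponent bookkeeping coincides with the paper's and the proposition follows.
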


\begin{proof}
As we have observed in \eqref{eqmu_dec}, we will establish the following two estimates for $\xi\in{\mathcal H}_{\varrho}$.
\begin{enumerate}
\item $\E\left[\left|\int_{\tau}^{d} e^{i Y_t}\,dt\right|^{2q}\right]  \lesssim C^q  q^q \cdot |\xi|^{-2\varrho q},$
\item $\E\left[\left|\int_c^{\tau}e^{i Y_t}\,dt\right|^{2q}\right]  \lesssim   C^q q^{q} |\xi|^{-(4\varrho-2)q}$
\end{enumerate}
for some constant $C>0$ depending only on $V$. Note that for $\varrho\in [1/2,1)$, we always have $2\varrho \ge 4\varrho-2$. The second term always possess a slower decay. Therefore,  our proposition follows. Let us establish the estimate now. 

\medskip

\noindent (1). Recall that  $Z_t = u(t\cos\theta + X_t\sin\theta)$ and that $Y_t = -2\pi Z_t$. By the definition of $\tau$  in (\ref{randomt}) we have $Z_{\tau} = \lceil Z_d-Z_c\rceil + Z_c$ if $Y_d\ge Y_c$ and $Z_{\tau} = \lfloor Z_d-Z_c\rfloor + Z_c$ if $Y_d<Y_c$. In both cases, the following inequality always holds
$$
Z_d-1 \le Z_{\tau} \le Z_d+1.
$$
Expanding it out, we have 
$$
u (d \cos \theta+ X_d\sin\theta)-1 \le u (\tau \cos\theta+ X_{\tau}\sin \theta) \le u (d \cos \theta+ X_d\sin\theta)+1. 
$$
If $\cos\theta>0$, we can use the first inequality to obtain
\begin{equation}\label{eq5.7}
\tau \geq d+ X_d \frac{\sin\theta}{\cos\theta} - X_{\tau}\frac{\sin\theta}{\cos\theta} - \frac{1}{u\cos\theta},
\end{equation}
and if $\cos\theta<0$ we can use the second inequality to obtain
\begin{equation}\label{eq5.8}
\tau \geq d+ X_d \frac{\sin\theta}{\cos\theta} - X_{\tau}\frac{\sin\theta}{\cos\theta} + \frac{1}{u\cos\theta}.
\end{equation}
Now, define the random variable $M = \max_{t\in [0,1]} |X_t|$. Since $X$ is almost surely continuous, $M<\infty$ almost surely. This combined with {Lemma \ref{trig}}, \eqref{eq5.7} and \eqref{eq5.8} above yields 
$$\tau\geq d - 2\sqrt{2} M u^{-\varrho} - \frac{\sqrt{2}}{u},$$
and consequently we find
$$
\left|\int_{\tau}^{d} e^{i Y_t}\,dt\right| \leq  d - \tau = 2\sqrt{2} Mu^{-\varrho} + \frac{\sqrt{2}}{u} \leq  \sqrt{2}(2M+1) |\xi|^{-\varrho}.
$$
Taking expectation and applying the elementary inequality $|a+b|^{2q}\le 2^{q}(|a|^q+|b|^q)$ gives
$$
\E\left[\left|\int_{\tau}^{d} e^{i Y_t}\,dt\right|^{2q}\right] \le |\xi|^{-2\varrho q} \E [|2\sqrt{2}M+2|^{2q}] \lesssim C^q \cdot \E[|M|^{2q}] \cdot |\xi|^{-2\varrho q}.
$$
Notice that 
$$
M = \max_{t\in [0,1]} |X_t|= \max_{s\in [0,V(1)]}|B_s|.
$$
The probability distribution of $M$ is known and we have  ${\mathbb P}(M\ge b) \le  \sqrt{\frac{V(1)}{2\pi}} \frac4{b} e^{-b^2/2V(1)}$ \cite[p.96]{KSbook}. This implies that 
$$
\E[|M|^{2q}] \lesssim \int_0^{\infty} b^{2q-1} e^{-b^2/2V(1)}db \lesssim  (V(1))^q\int_0^{\infty} b^{2q} e^{-b^2/2}db.
$$
By a standard moment estimate of the normal distribution,  $\int_0^{\infty} b^{2q} e^{-b^2/2}db \lesssim  (2q-1)!! = (2q-1)(2q-3)....3.1 \lesssim (2q)^q$. Hence, $\E[|M|^{2q}] \lesssim C^qq^q$ for some constant $C>0$ depending only on $V$.
\medskip

\noindent (2). We now proceed to study the other integral from $0$ to the random time $\tau$. Notice that It\^{o}'s formula extends easily to complex valued $C^2$ functions by simply considering real and imaginary parts. We now take  $Y_s =bT_s+ \sigma B_s $ where $\sigma B_s$ is the martingale and $bT_s$ is the finite variation process and also consider  
$$
f(x) = e^{i x}
$$
for some constant $b,\sigma\in \R$. We will put 
$$
b = -2\pi u \cos\theta, \ \sigma = -2\pi u \sin\theta.
$$
 Then we have for $R = V(\tau)$ and $S = V(c)$
$$
\begin{aligned}
& e^{i (bT(R)+\sigma B_R)} -e^{i (bT(S)+\sigma B_S)} \\
 =&  ib \int_S^R e^{i (bT(s)+\sigma B_s)}dT(s)+ i\sigma \int_S^Re^{i (bT(s)+\sigma B_s)} dB(s)-\frac12\sigma^2 \int_S^R e^{i (bT(s)+\sigma B_s)} ds
\end{aligned}
$$
Note that the left hand side above is equal to  $ e^{i(b\tau+\sigma X_{\tau})} - e^{i (bc+ \sigma X_c)}  = e^{iY_{\tau}} - e^{iY_c}= 0$ by the definition of $\tau$. Hence, our formula reduces to 
$$
- ib \int_c^{\tau} e^{i (bt+\sigma X_t)}dt =  i\sigma \int_S^Re^{i (bT(s)+\sigma B_s)} dB(s)-\frac12\sigma^2 \int_{c}^{\tau} e^{i (bt+\sigma X_t)} dV(t)
$$
(See footnote {\footnote{For standard Brownian Motion, $dV = dt$ and we obtain a further factorization, allowing Fraser and Sahlsten \cite{FS2018} obtaining a sharp result in the case of Brownian Motion}}).
Hence, applying the elementary inequality $|a+b|^{2q}\le 2^{2q} (|a|^q+|b^q|)$
\begin{equation}\label{eq012}
\begin{aligned}
&  \ \E\left[\left|\int_c^{\tau} e^{i (bt+\sigma X_t)}dt\right|^{2q} \right]    \\
 \le &  \  2^{2q} \left(\frac{\sigma^{2q}}{b^{2q}} \E \left[ \left|\int_S^Re^{i (bT(s)+\sigma B_s)} dB(s)\right|^{2q}\right] + \frac{\sigma^{4q}}{2^{2q}b^{2q}} \E\left[\left|\int_c^{\tau} e^{i (bt+\sigma X_t)} dV(t)\right|^{2q}\right] \right). \\
\end{aligned}
\end{equation}
 Let $f(s) =\left\{\begin{array}{ll} \cos (bT(s)+\sigma B_s) & \mbox{if}  \ s\in[V(c),V(d)] \\ 0 & \mbox{if} \  s\in[0,V(c)] \end{array}\right.$.  With this extension, the It\^{o} integral 
$$
\int_{V(c)}^{t} \cos (bT(s)+\sigma B_s)  dB(s)= \int_0^{t} f(s)dB_s
$$
for all $t\in[0,V(1)]$. Moreover, this process is a martingale. By the Burkholder-Davis-Gundy inequality, for some constant $C_q\lesssim q^q$ (Theorem \ref{bdg}),
$$
\begin{aligned}
\E \left[ \left|\int_{V(c)}^{V(\tau)} \cos (bT(s)+\sigma B_s)  dB(s)\right|^{2q}\right]  \le&  \  \E \left[  \sup_{t\in[0,V(1)]}\left|\int_0^t f(s) dB(s)\right|^{2q}\right]   \\
\le& \ C_q \E \left[ \left|\int_{V(c)}^{V(1)}\cos^2 (bT(s)+\sigma B_s)  ds\right|^{q}\right]  \\
\lesssim& \  C^qq^{2q}(V(1))^q   \ \ \    \ (\mbox{because} \cos^2(...)\le 1).
\end{aligned}
$$
The same also holds with $\cos$ replaced by $\sin$. Hence, by using $e^{i\theta} = \cos\theta+i\sin\theta$ and  $|a+b|^{2q}\le 2^{2q} (|a|^q+|b^q|)$,
\begin{equation}\label{eq456}
\E \left[ \left|\int_{V(c)}^{V(\tau)}e^{i (bT(s)+\sigma B_s)} dB(s)\right|^{2q}\right] \lesssim 2^q q^{q} (V(1))^q.
\end{equation}
Also, by triangle inequality, we have 
\begin{equation}\label{eq123}
\E\left[\left|\int_c^{\tau} e^{i (bt+\sigma X_t)} dV(t)\right|^{2q}\right]  \le \E\left[\left(\int_c^{\tau}  dV(t)\right)^{2q}\right]  \le (V(\tau))^{2q}\le V(1)^{2q}.
\end{equation} 
Putting (\ref{eq456}) and (\ref{eq123}) into (\ref{eq012}), we obtain 
$$
\E\left[\left|\int_c^{\tau} e^{i (bt+\sigma X_t)}dt\right|^{2q} \right]  \lesssim \left(\frac{\sigma}{b}\right)^{2q}\cdot 2^{3q} q^{q} (V(1))^q +   \left(\frac{\sigma^2}{b}\right)^{2q} \cdot2^{q} V(1)^{2q}.
$$

Now, we apply Lemma \ref{trig} to obtain 
$$\frac{\sigma^{2q}}{b^{2q}}  = \frac{(-2\pi u\sin\theta)^{2q}}{(-2\pi u\cos\theta)^{2q}} \leq 2^q u^{-2\varrho q}\quad \text{and} \quad
\frac{\sigma^{4q}}{b^{2q}} = \frac{(-2\pi u\sin\theta)^{4q}}{(-2\pi u\cos\theta)^{2q}} \leq (2\pi^2)^qu^{-(4\varrho - 2)q},$$
and note that because $2\varrho \ge  4\varrho - 2\ge 0$ for $\varrho\in[1/2,1)$, we have 
$$
\E\left[\left|\int_0^{\tau} e^{i (bt+\sigma X_t)}dt\right|^{2q} \right]  \lesssim  C^q q^{q} |\xi|^{-(4\varrho-2)q}
$$
for some constant $C>0$.  Hence, our proposition follows. 
\end{proof}

\medskip

\noindent{\it Proof of Theorem \ref{thm_Fdim}.} Let $(X_t)_{t\in[0,1]}$ be a centered continuous additive process; we write $X_t = B_{V(t)}$ for some strictly increasing function $V$. Then $T = V^{-1}$ is a continuous and strictly increasing function satisfying 
$$
|T(x)-T(y)| \lesssim |x-y|^{\gamma}.
$$
As $T$ induces a Lebesgue-Stieltjes measure $\theta (I) = T(y)-T(x)$, $I = [x,y]$, we know that $\theta(I)\lesssim |I|^{\gamma}$ holds. Hence, Proposition \ref{prop_vertical} can be applied.  We notice that by Proposition \ref{prop_vertical} and \ref{PropH}, there exists a constant $C>0$ such that for all $\xi\in\R^2$, 
$$
\E\left[|\hat{\mu}_\mathcal{G}(\xi)|^{2q}\right] \lesssim C^q q^{q} |\xi|^{-\beta q}
$$
where $\beta = \min \{4\varrho-2, (2-2\varrho)\gamma\}$. By Lemma \ref{randmeaslem}, dim$_F {\mathcal G}(X,J)\ge \beta$.  We now maximize $\beta$ over $\varrho\in[1/2,1)$. We find that the maximum occurs when $\varrho = \frac{1+\gamma}{2+\gamma}$ and $\beta  = \frac{2\gamma}{2+\gamma}$.
\qquad$\Box$
\medskip

\noindent{\it Proof of Theorem \ref{fouriermainresult}}. Let $t\in J$ be the point with positive upper local uniform H\"{o}lder index. Then for any $\eta>0$ sufficiently small, we can find a sufficiently small interval $I(t,\delta_t)$ such that 
$$
|T(x)-T(y)| \le |x-y|^{(1-\eta)\alpha^{\ast}(t)}
$$
Applying Theorem \ref{thm_Fdim}, we obtain immediately that 
$$
\mbox{dim}_F {\mathcal G}(X,I(t,\delta_t))\ge  \frac{2(1-\eta)\alpha^{\ast}(t)}{2+(1-\eta)\alpha^{\ast}(t)}.
$$
Taking $\eta\to 0$ and using that Fourier dimension is monotone yields, 
$$
\mbox{dim}_F {\mathcal G}(X,J) \ge \mbox{dim}_F {\mathcal G}(X,I(t,\delta_t))\ge  \frac{2\alpha^{\ast}(t)}{2+\alpha^{\ast}(t)}.
$$
This completes the proof. 
\qquad$\Box$

\medskip

\section{Examples and Proof of Corollary \ref{corollaryhaus}}

We now give a way on how to use our theorems to determine the Hausdorff dimension of continuous additive processes with a given variance function. We first notice that there are two scenarios according to the decomposition in (\ref{decomposition})
\begin{enumerate}
\item there are only finitely many open intervals on which $V$ is constant. 
\item there are countably many disjoint open intervals on which $V$ is constant. 
\end{enumerate}
 In the first case, we can without loss of generality assume that $V$ is strictly increasing. This is because $E_V$ will simply be a finite union of closed intervals in $J$ and inside each interval, $V$ is strictly increasing and we can determine the Hausdorff dimension on each small intervals. In the second case, as $V$ is a continuous function, $E_V$ cannot contain isolated points. Hence, $E_V$ is perfect. If we have an interval inside $E_V$, we can study it separately as in the first case. Therefore, we can assume that  $E_V$ is a totally-disconnected Cantor-type set. It is difficult to study this case in full generality. We will  study $E_V$ as the middle-third Cantor set in the next section.

\medskip

Focusing on the first case, we recall that if $V$ is locally uniformly bi-Lipschitz at a point $t$, i.e. there exists $\delta>0$, $c,C>0$ depending on $\delta$ such that 
$$
c |u-v| \le |V(u)-V(v)| \le C|u-v|
$$
for all $u,v\in I(t,\delta)$, then it is immediately seen that $\alpha^{\ast} (t)= \alpha_{\ast}(t) = 1$.

\medskip

\noindent{\it Proof of Corollary \ref{corollaryhaus}.} Let $V$ be a locally bi-Lipschitz function on $J \setminus\{t_1,...,t_N\}$, where $t_1,..,t_N$ are the exceptional points for which $V$ is not locally bi-Lipschitz. We first notice that such $V$ must be strictly increasing on $J$. Otherwise, if there exists $u_1\ne u_2$ such that $V(u_1) = V(u_2)$, then $V$ must be constant on $[u_1,u_2]$. However, there must be a point such that $V$ is locally uniformly bi-Lipschitz. The lower bound implies $V$ cannot be a constant in the neighborhood of that point, which is a contradiction. 

\medskip

Let $I_{i,k} = \left[t_i-\frac1n,t_i-\frac1{n+1}\right]\cup \left[t_i+\frac1n,t_i+\frac1{n+1}\right]$. Take $n_1,...n_N$ so that $I_{i,n_i}$ are all disjoint. Then 
$$
J\setminus \{t_1,...,t_N\} = \left(\bigcup_{i=1}^N\bigcup_{k= n_i}^{\infty} I_{i,k}\right) \cup J',
$$ 
 where $J'$ is the complement of the unions in $J\setminus\{t_1,...,t_N\}$. Note that 
 $$
 {\mathcal G}(X, J) =  \left( \bigcup_{i=1}^N\bigcup_{k= n_i}^{\infty}{\mathcal G}(X, I_{i,k}) \right)\cup {\mathcal G}(X,J')  \cup {\mathcal G}(X,\{t_1,...,t_N\}). 
  $$ 
  Since all points in $I_{i,k}$ and $J'$ are locally bi-Lipschitz, we find that $\alpha_{\ast}=1$. Hence, dim$_H{\mathcal G}(X, I_{i,k}) = 3/2$ and dim$_H$ ${\mathcal G}(X, J') = 3/2$ by Theorem \ref{hausmainresult}. While the dimension of graph of finitely many points must be zero, by the countable stability of Hausdorff dimension, dim$_H{\mathcal G}(X,J) = 3/2$ holds. Using Theorem \ref{thm_Fdim}, the Fourier dimension is at least 2/3.  
  \qquad$\Box$

 \medskip
 
 In the following two examples, the Hausdorff and Fourier dimension estimates can be determined using Corollary \ref{corollaryhaus}, and they additionally illustrate that the upper and lower uniform H\"{o}lder indices may be different at critical points.

\begin{example}\label{example6.1}  Let $V(t) = t^6$, the Hausdorff dimension of the graph was determined to be $3/2$. However, one can calculate that $\alpha_{\ast}(0) = 6$. Indeed, 
 if $\alpha<6$, then
 $$
  \sup_{u_1, u_2 \in I(0,\delta)} \frac{|u_1 - u_2|^\alpha}{|V(u_1) - V(u_2)|} \ge \frac{u_1^{\alpha}}{u_1^6} \ge \frac{1}{\delta^{6-\alpha}}\to\infty
 $$
 as $\delta\to 0$. This shows that $\alpha_{\ast}(0) \ge 6$. On the other hand, let $\alpha>6$ and $u_1<u_2<\delta$, we have
 $$
 \frac{|u_1 - u_2|^\alpha}{|V(u_1) - V(u_2)|}  \ = \ u_2^{\alpha-6}\cdot\frac{(1-(u_1/u_2))^{\alpha}}{1-(u_1/u_2)^6} \ \lesssim \ \delta^{\alpha-6}
 $$
 as $\lim_{x\to 1} \frac{(1-x)^{\alpha}}{1-x^6} = 0$, which means the expression$ \frac{(1-(u_1/u_2))^{\alpha}}{1-(u_1/u_2)^6}$ is bounded for all $u_1/u_2\in[0,1)$. This shows that $\alpha_{\ast}(0) = 6$. 
  This indicates, as in the graph shown, that the trajectories are infinitetestimally flat near $0$. The example also holds for all $V(t) = t^{\beta}$ where $\beta>1$, in which $\alpha_{\ast}(0) = \beta$.
  \end{example}
  
  \begin{figure}[h]
\begin{center}
	\includegraphics[width=10cm]{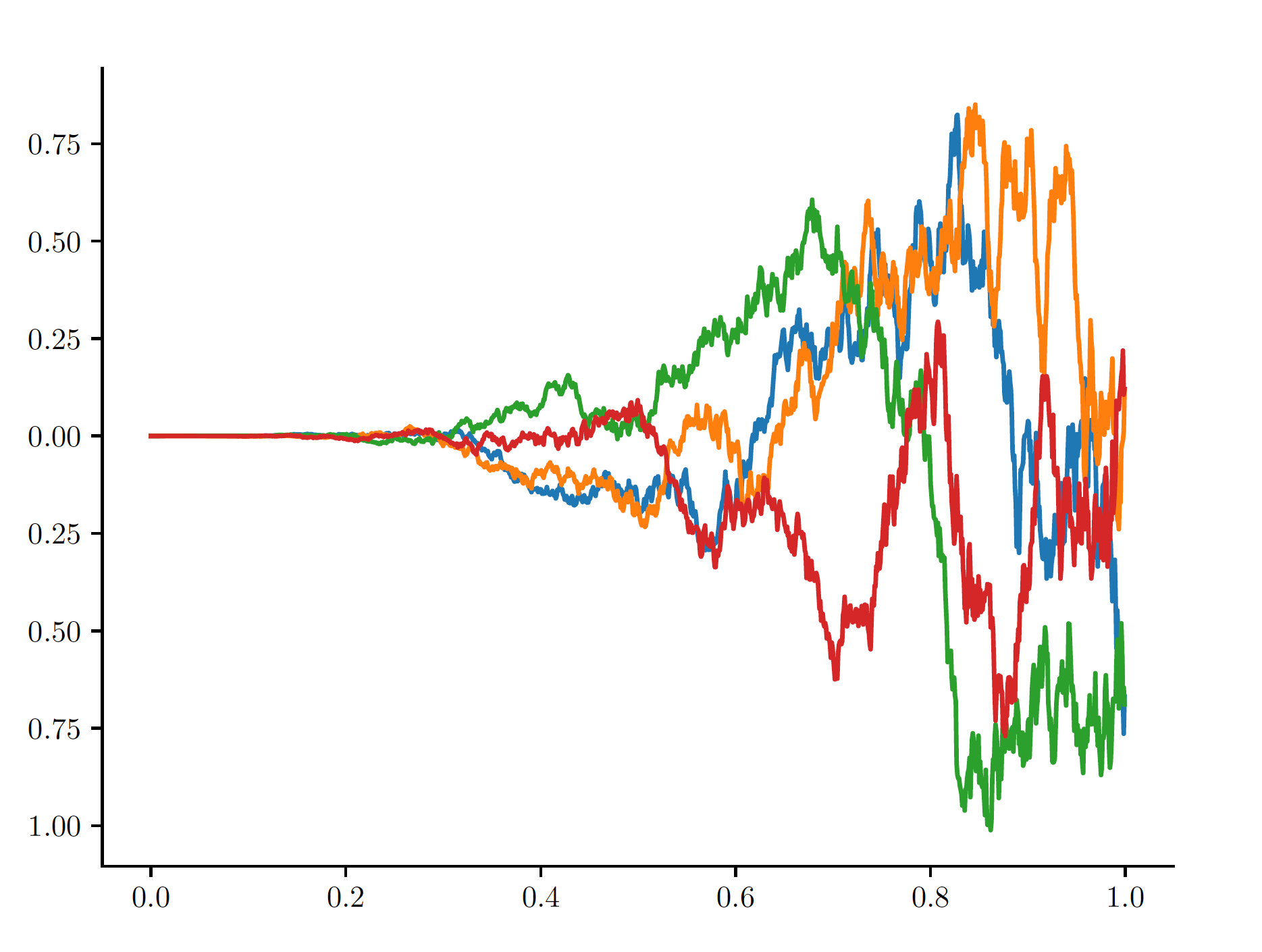}
\end{center}
\caption{Four sample paths of $(B_{t^6})_{t\in[0,1]}$}
\end{figure}
  
  \medskip
  
\begin{example}   If we now consider $V(t) = \sqrt{t}$.  We will show that $\alpha^{\ast} (0)= 1/2$ and $\alpha_{\ast}(0) = 1$. To see this, we note that for all $\alpha>1/2$  and $u_1,u_2\in[0,\delta)$,
$$
\sup_{u_1,u_2\in I(0,\delta)}\frac{|V(u_1)-V(u_2)|}{|u_1-u_2|^{\alpha}}\ge \frac{\sqrt{u_1}}{u_1^{\alpha}} = \frac{1}{u_1^{\alpha-1/2}} \ge \frac{1}{\delta^{\alpha-1/2}} \to\infty
$$
as $\delta\to 0$. Hence, $\alpha^{\ast}(0) \le 1/2$.  On the other hand, for all $\alpha<1/2$ and $u_1<u_2<\delta$,  
$$
\frac{|V(u_1)-V(u_2)|}{|u_1-u_2|^{\alpha}}  = u_2^{1/2-\alpha}\cdot \frac{1-(u_1/u_2)^{1/2}}{(1-u_1/u_2)^{\alpha}} \lesssim \delta^{1/2-\alpha}\to 0
$$
because $ \lim_{x\to 1}\frac{1-x^{1/2}}{(1-x)^{\alpha}} = 0$, which means that $ \frac{1-(u_1/u_2)^{1/2}}{(1-u_1/u_2)^{\alpha}}$ is bounded for all $u_1/u_2\in[0,1)$. Hence, $\alpha^{\ast} = 1/2$. 

\medskip
 
We now notice that, from Proposition \ref{propindex}, 
$$
\alpha_{\ast}(0) = \frac1{\beta^{\ast}(0)}
$$ 
 where $\beta^{\ast}(0)$ is the upper local H\"{o}lder index of $t^2$ at $t = 0$. Using mean value theorem,  we can deduce easily that $\beta^{\ast}(0) = 1$. Therefore, $\alpha_{\ast}(0) = 1$.
 \end{example}

   \begin{figure}[h]
\begin{center}
	\includegraphics[width=10cm]{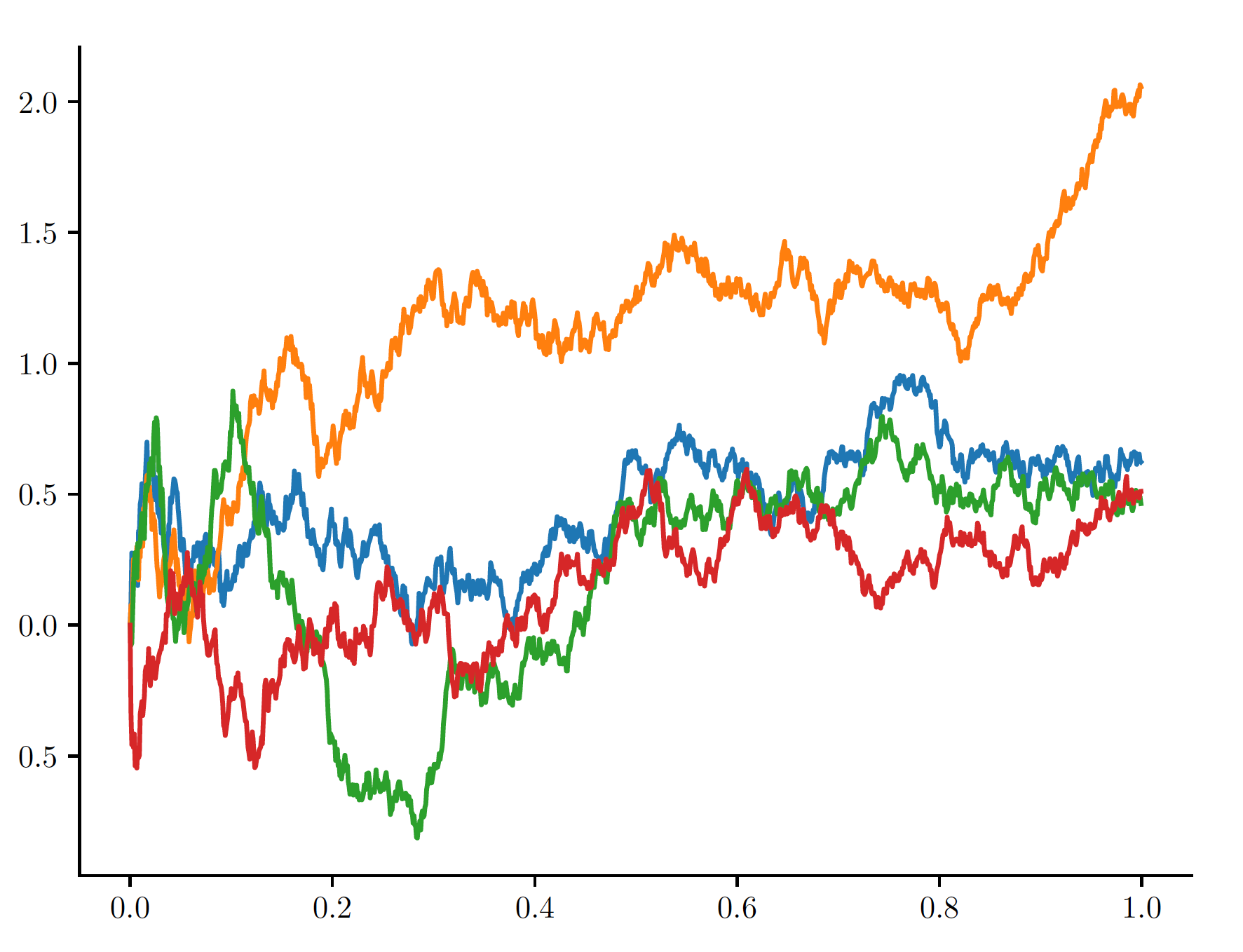}
\end{center}
\caption{Four sample paths of $(B_{\sqrt{t}})_{t\in[0,1]}$}
\end{figure}
%
%

\medskip

The following example studied distribution functions from the Bernoulli convolution associated with the ``golden ratio", which is one of the most famous singularly continuous measures. For recent progresses about researches of Bernoulli convolution, readers may refer to \cite{Varju}.

\begin{example}\label{example6.3}
Let $\rho = \left(\frac{1+\sqrt{5}}{2}\right)^{-1}$ be the inverse of the golden ratio. Consider the map
$$
S_{1}(x) = \rho x, S_2(x) = \rho x+(1-\rho).
$$
The Bernoulli convolution is the unique self-similar measure $\mu$ such that 
$$
\mu (E) = \frac12 \mu (S_1^{-1}(E))+\frac12 \mu (S_2^{-1}(E)), \forall E \ \mbox{Borel}.
$$
Note that $\mu$ is compactly supported on $[0,1]$ and additionally $\mu$ is well-known to be a singular measure without atoms. Therefore, $V(t) = \mu (-\infty,t]$ is a strictly increasing continuous function on $[0,1]$. In studying the multifractal analysis of $\mu$, it was known (see e.g. \cite[Proposition 2.2]{FengLau}) that there exists $\delta>0$ and $s_2>s_1>0$ such that for all $x\in[0,1]$ and $0<r<\delta$
$$
r^{s_1} \lesssim\mu(x-r,x+r)\lesssim r^{s_2}.
$$
In particular, we know that $s_2<1$ (otherwise $\mu$ is not singular) and $s_1 = \frac{\log 2}{-\log\rho} \sim 1.4404...$ (from the last line of \cite[Proposition 2.2]{FengLau}). Translating this result to $V$, we know
$$
|x-y|^{s_1}\lesssim|V(x)-V(y)|\lesssim |x-y|^{s_2}
$$
for all $x,y\in[0,1]$ and $|x-y|\le \delta$. Hence, $ \alpha^{\ast}\ge s_2$ and $\alpha_{\ast}\le s_1$, and as a result the Hausdorff dimension of the graph of $X_t = B_{V(t)}$ is between $2-s_1/2$ and $2-s_2/2$. Let $\beta = 1/s_1\sim0.6942...$; then, the Fourier dimension of the graph of $X_t$ is at least $\frac{2\beta}{2+\beta} \sim 0.5154....$
\end{example}

\section{$L^q$ spectrum and Self-similar measures}\label{section-self-similar}

In the next two sections, we will prove our multifractal result. We will first define the terminologies and provide some lemmas necessary for the main proof.

 \subsection{$L^q$ spectrum for continuous functions.} Given a closed ball $B$, oscillation of $f$ on $B$ is defined to be 
 $$
 \mbox{Osc}_f(B) = \sup_{s,t\in B} |f(s)-f(t)|
 $$
 The $L^q$-spectrum of a continuous function $f$ is defined to be 
 $$
 \tau_f(q) = \liminf_{r\to0} \frac{1}{\log r} \log \left( \sup_{{\mathcal B}} \sum_{B\in {\mathcal B}} (\mbox{Osc}_f (B))^q \right)
 $$
 where supremum is taken over all families of disjoint closed balls ${\mathcal B}$ inside the domain of $f$ where $ \mbox{Osc}_f (B)>0$
 for all $B\in{\mathcal B}$.

\medskip

 Let 
 $$
 X^H(t) = B^H_{V(t)}
 $$
 where $B^H$ is the fractional Brownian motion with Hurst index $0<H<1$  and $V$ is a continuous and strictly increasing function on $[0,1]$ with $V(0) = 0$. Associate with $V$, there is a measure $\mu$ induced by $V$ given by $\mu[0,x] = V(x)$. We can define the $L^q$ spectrum for $\mu$ by 
 $$
 \tau_{\mu}(q) =  \liminf_{r\to0} \frac{1}{\log r} \log \left( \sup_{{\mathcal B}} \sum_{B\in {\mathcal B}} (\mu(B))^q \right)
 $$
  where supremum is taken over all families of disjoint closed balls ${\mathcal B}$ where each $B\in{\mathcal B}$ has the center inside the support of $f$. This is clear that $\tau_{\mu}(q) = \tau_V(q)$ and $\tau_V(1)  = 0$. It is also known that $\tau_V$  is a concave and increasing function on ${\mathbb R}$. Therefore, $\tau(q)<0$ if $q<1$.

\medskip


\medskip

Our goal is to compute the almost sure Hausdorff dimension of the graph of $X^H$. We first show that a generic upper bound exist for any  increasing function

\begin{lemma}\label{lemma_tau_upper_bound}
Almost surely, the continuous additive process $X(t) = B^H_{V(t)}$ satisfies 
$$
\overline{\mbox{\rm dim}}_B ({\mathcal G}(X, [0,1])) \le 1- \tau_V\left(H\right).
$$
\end{lemma}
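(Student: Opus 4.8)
The plan is to estimate the number of boxes of side $\delta$ needed to cover $\mathcal{G}(X,[0,1])$ by partitioning $[0,1]$ into the $\approx 1/\delta$ intervals $I_j$ of length $\delta$, and on each $I_j$ controlling the vertical oscillation of $X^H$ by the oscillation of $B^H$ over the image interval $V(I_j)$, which has length $\mu(I_j)$. Since $B^H$ is almost surely $\gamma$-Hölder for every $\gamma<H$, we have $\mathrm{Osc}_{B^H}(V(I_j)) \lesssim \mu(I_j)^{\gamma}$ uniformly in $j$ (the implied constant is random but finite a.s.), so the graph over $I_j$ is covered by $\lesssim 1 + \mu(I_j)^{\gamma}/\delta$ boxes of side $\delta$. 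Summing over $j$, the box count is bounded by
$$
N(\delta) \lesssim \frac{1}{\delta} + \frac{1}{\delta}\sum_j \mu(I_j)^{\gamma}.
$$

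Next I would compare $\sum_j \mu(I_j)^{\gamma}$ to the quantity defining $\tau_\mu$. The intervals $I_j$ form a family of essentially disjoint balls of radius $\delta/2$ with centers covering $[0,1]\supseteq \mathrm{supp}(\mu)$, so by the definition of the $L^q$ spectrum, for any $\epsilon>0$ and all sufficiently small $\delta$,
$$
\sum_j \mu(I_j)^{\gamma} \le \delta^{\,\tau_\mu(\gamma)-\epsilon}.
$$
Hence $N(\delta) \lesssim \delta^{-1} + \delta^{\tau_\mu(\gamma)-\epsilon-1}$. Since $\gamma<1$ and $\tau_\mu$ is increasing with $\tau_\mu(1)=0$, we have $\tau_\mu(\gamma)\le 0$, so the second term dominates, giving
$$
\overline{\dim}_B(\mathcal{G}(X,[0,1])) = \limsup_{\delta\to 0}\frac{\log N(\delta)}{-\log\delta} \le 1 - \tau_\mu(\gamma) + \epsilon.
$$
Letting $\epsilon\to 0$ and then $\gamma\uparrow H$, and using continuity of the concave function $\tau_\mu$, yields $\overline{\dim}_B(\mathcal{G}(X,[0,1])) \le 1-\tau_\mu(H) = 1-\tau_V(H)$.

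The main obstacle is handling the random Hölder constant correctly: $B^H$ is Hölder on compact sets with a constant $C_\omega$ that is a.s.\ finite but not deterministic, so the bound $\mathrm{Osc}_{B^H}(V(I_j))\le C_\omega \mu(I_j)^\gamma$ must be applied with a single $\omega$-dependent constant valid uniformly over all $j$ and all small $\delta$ — this is fine because $V([0,1])$ is a fixed compact interval, but it needs to be stated carefully so the $\limsup$ is taken pathwise. A secondary point is that the definition of $\tau_\mu$ uses a $\liminf$ over $r\to 0$, which is exactly what makes the inequality $\sum_j \mu(I_j)^\gamma \le \delta^{\tau_\mu(\gamma)-\epsilon}$ valid only for small $\delta$; since we only want a $\limsup$ upper bound on box dimension, this direction is the favorable one and no reverse estimate is needed. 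One should also note the trivial case $\gamma$ with $\tau_\mu(\gamma)$ possibly very negative is harmless since we only claim an upper bound.
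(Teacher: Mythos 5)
Your proof is correct and takes essentially the same route as the paper: cover the graph over a uniform grid of intervals, bound the vertical oscillation on each interval by the almost sure H\"{o}lder property of $B^H$ applied to $\mathrm{Osc}_V(I_j)=\mu(I_j)$, and convert $\sum_j \mu(I_j)^{\gamma}$ into a power of $\delta$ via the $L^q$ spectrum (the paper does this with dyadic intervals and phrases the intermediate step as $\tau_{X^H}(1)\ge \tau_V(H)$, but the content is identical). The only point to state carefully is that your grid intervals are not literally disjoint closed balls centered in $\mathrm{supp}\,\mu$ as in the definition of $\tau_\mu$, which is handled either by the standard discard-empty-intervals/re-center/bounded-overlap remark or by using the paper's equivalent oscillation-based spectrum $\tau_V$.
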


\begin{proof}
 As the fractional Brownian motion is H\"{o}lder to all exponents less than $H$, we see that for all $\epsilon>0$, for all balls $B$, 
  $$
  (\mbox{Osc}_X (B))^q \lesssim \mbox{Osc}_V (B)^{(H-\epsilon) q}
  $$
  Hence,  it is not hard to deduce that $\tau_{X^H}(q)\ge \tau_{V}(Hq)$. In particular, 
  $$
  \tau_{X^H} (1)\ge \tau_V(H)
  $$
Let $I_{n,k}$ be the dyadic intervals $[k2^{-n}, (k+1)2^{-n})$ The graph of $X$ on $I_{n,k}$ can be covered by at most $\lfloor\frac{\mbox{Osc}_X(I_{n,k})}{2^{-n}}\rfloor+1$ boxes of side length $2^{-n}$. Therefore, 
$$
\overline{\mbox{dim}}_B ({\mathcal G}(X, [0,1])) \le \limsup_{n\to\infty} \frac{\log \left(\sum_{k=0}^{2^n-1} (\frac{\mbox{Osc}_X(I_{n,k})}{2^{-n}} +1)\right)}{-\log 2^{-n}} \le 1-\tau_X(1) \le 1-\tau_V(H).
$$
The lemma follows.
\end{proof}

\medskip

Lemma \ref{lemma_tau_upper_bound} provides us a natural upper bound of the process in terms of the $L^q$ spectrum. The natural question will be to study if the equality can be attained, at least for natural classes of increasing functions. The answer turns out to be affirmative.

\medskip

 \subsection{Self-similar measures.} Let $0<r_i<1$ and let $0=d_0 \le d_1<...<d_{m-1} = 1-r_{m-1}$ be positive real numbers. We define 
$$
S_i (x) = r_ix+ d_i
$$
Then $\Phi = \{S_i: i = 0,...,m-1 \}$ defines an iterated function system (IFS) with a unique compact attractor $K$ satisfying 
$$
K  = \bigcup_{i=0}^{m-1} S_i(K)
$$
and for a given set of probability vectors ${\bf p} = (p_0,..,p_{m-1})$ where $p_0+...+p_{m-1} = 1$, a unique self-similar measure $\mu = \mu (\Phi, {\bf p})$ is defined as the measure  satisfying 
$$
\mu(E) = \sum_{i=0}^{m-1} p_i \mu (S_i^{-1}(E)), \ \forall E \ \mbox{Borel}. 
$$
We say that the IFS satisfies the {\it convex open set condition} if $S_i(0,1)\cap S_j(0,1) = \emptyset$ for all $i\ne j$. By our construction, the self-similar set $K\subset [0,1]$ and  $1\in K$ and $S_i(K)\cap S_j(K)$ intersects at most only one point. The self-similar measure $\mu$ is compactly supported inside $K$ and it  defines a continuous increasing functions $V (x) = \mu [0,x]$. The $L^q$ spectrum $\tau_V(q)$ of the above IFS has been well-studied in literature \cite{Falconer2013} and it is well-known that it satisfies the equation 
\begin{equation}\label{eq_multi-fractal}
\sum_{i=0}^{m-1} p_i^q r_i^{-\tau_V(q)} = 1.    
\end{equation}
It is well-known that $\tau_V$ is concave and increasing. For some basic knowledge about self-similar measures and this $L^q$ spectrum, we refer readers to \cite{Falconer2013, BP}.  Using the multiindex notation, $\Sigma^0 = \emptyset$, $\Sigma^k = \{0,1,...m-1\}^k$ for all $k\ge 1$ and $\Sigma^{\ast} = \bigcup_{k\ge 0}\Sigma^k$. If $\sigma\in\Sigma$ If $\sigma  = (\sigma_1,...,\sigma_k)\in \Sigma^k$, then we define $\sigma^{-} = (\sigma_1,...,\sigma_{k-1})$.  $\sigma\sigma'$ means the concatenation of two words $\sigma$ and $\sigma'$. 

\medskip

We can then project the multi-indices onto the self-similar sets and the probability vectors. 
$$
S_{\sigma} (x) = S_{\sigma_1}\circ...\circ S_{\sigma_k}(x).
$$
We let $K_{\sigma} = S_{\sigma}(K)$. We will use (\ref{eq_multi-fractal}) to define a new probability vector 
\begin{equation}\label{eq_q}
{\bf q} = (q_0,...,q_{m-1}), \ q_i = p_i^q r_i^{-\tau_V(q)}.    
\end{equation}
Let also $q_{\sigma} = q_{\sigma_1}....q_{\sigma_k}$ and similarly for $p_{\sigma}$ and $r_{\sigma}$. The measure $\nu$ is the unique self-similar measure
$$
\nu = \mu (\Phi,{\bf q}).
$$
\medskip

We also need to collect all those $K_{\sigma}$ with approximately equal diameter. Let $0<t<1$, we define 
$$
\Lambda_n = \{\sigma\in\Sigma^{\ast}: r_{\sigma} \le t^n < r_{\sigma^-}\}
$$
It is well-known that 
$$
K = \bigcup_{\sigma\in\Lambda_n}K_{\sigma}.
$$
Suppose that $\sigma\in\Lambda_n$, we define 
$$
\Lambda_{\sigma,n} = \left\{\sigma': \sigma\sigma'\in\Lambda_n\right\}.
$$
As a direct observation, 
$$
\Lambda_{n+1} = \bigcup_{\sigma\in\Lambda_n} \{\sigma\}\times \Lambda_{\sigma,n}.
$$
We should notice that for the equicontractive IFS (i.e. all $r_i = r$), we can let $t = r$ and then $\Lambda_n = \Sigma^n$ for all $n\in{\mathbb N}$. 

\begin{lemma}\label{lemma_finite}
There exists $C>0$ such that the cardinality of $\Lambda_{\sigma,n}$, denoted by $\#\Lambda_{\sigma,n}$, is bounded above by $C$ uniformly for all $\sigma\in\Lambda_n$ and $n\in{\mathbb N}$. 
\end{lemma}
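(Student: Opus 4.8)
The plan is to bound the \emph{length} of every word in $\Lambda_{\sigma,n}$ by a constant independent of $\sigma$ and $n$, and then simply count. Write $r_{\min}=\min_{0\le i\le m-1}r_i$ and $r_{\max}=\max_{0\le i\le m-1}r_i$; since there are only finitely many maps, $0<r_{\min}\le r_{\max}<1$. Recall that, by the displayed decomposition $\Lambda_{n+1}=\bigcup_{\sigma\in\Lambda_n}\{\sigma\}\times\Lambda_{\sigma,n}$, for $\sigma\in\Lambda_n$ one has $\sigma'\in\Lambda_{\sigma,n}$ precisely when $\sigma\sigma'\in\Lambda_{n+1}$, and I will use this membership relation throughout.

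First I would record the two elementary consequences of these definitions. From $\sigma\in\Lambda_n$ we get $r_\sigma\le t^n$ and $r_{\sigma^-}>t^n$, and since $r_\sigma=r_{\sigma^-}r_j\ge r_{\min}r_{\sigma^-}$ (with $j$ the last letter of $\sigma$) this yields $r_{\min}t^n<r_\sigma\le t^n$. Now fix a nonempty $\sigma'\in\Lambda_{\sigma,n}$, with last letter $j'$. From $\sigma\sigma'\in\Lambda_{n+1}$ and the identity $(\sigma\sigma')^-=\sigma(\sigma')^-$ we obtain $r_\sigma r_{(\sigma')^-}=r_{(\sigma\sigma')^-}>t^{n+1}$, hence, using $r_\sigma\le t^n$,
$$
r_{\sigma'}=r_{(\sigma')^-}r_{j'}\ \ge\ r_{\min}\,r_{(\sigma')^-}\ >\ \frac{r_{\min}\,t^{\,n+1}}{r_\sigma}\ \ge\ r_{\min}\,t .
$$
This is the one substantive point: every cylinder indexed by $\Lambda_{\sigma,n}$ has contraction ratio bounded below by the absolute constant $r_{\min}t$, uniformly in $\sigma$ and $n$ — intuitively, the pieces one adds when passing from the $n$-th to the $(n+1)$-st generation cannot be arbitrarily small.

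The lemma then follows at once. Since $r_{\sigma'}\le r_{\max}^{|\sigma'|}$, the bound above forces $|\sigma'|\le \log(r_{\min}t)/\log r_{\max}=:L$, a finite number because both logarithms are negative; as each letter has $m$ choices, $\#\Lambda_{\sigma,n}\le 1+\sum_{k=1}^{\lfloor L\rfloor}m^k=:C$, the extra $1$ accounting for the possible empty word. (Alternatively, with a sharper constant: $\Lambda_{\sigma,n}$ is an antichain, so by the convex open set condition the intervals $S_{\sigma'}\big((0,1)\big)$, $\sigma'\in\Lambda_{\sigma,n}$, are pairwise disjoint subintervals of $(0,1)$, whence $\sum_{\sigma'\in\Lambda_{\sigma,n}}r_{\sigma'}\le 1$; combined with $r_{\sigma'}>r_{\min}t$ this gives $\#\Lambda_{\sigma,n}< 1/(r_{\min}t)$.) There is no genuine obstacle here: the whole content is the uniform lower bound $r_{\sigma'}>r_{\min}t$, which is just a rescaling of the defining inequalities for $\Lambda_n$ and $\Lambda_{n+1}$. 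The only care needed is routine bookkeeping around the empty word and degenerate values (e.g.\ $n=0$, where $t^n=1$ and $\Lambda_0=\{\emptyset\}$), all of which are trivial and do not affect the bound.
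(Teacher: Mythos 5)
Your proof is correct and follows essentially the same route as the paper: both extract from the defining inequalities for $\Lambda_n$ and $\Lambda_{n+1}$ a uniform lower bound $r_{\sigma'}>r_{\min}t$ (the paper also records the upper bound $r_{\sigma'}\le r_{\min}^{-1}t$, which, as you note, is not needed) and then conclude that only finitely many words can satisfy it. The only difference is cosmetic: you make the final counting explicit via the length bound $|\sigma'|\le \log(r_{\min}t)/\log r_{\max}$, a step the paper leaves implicit.
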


\begin{proof}
    Let $n\in{\mathbb N}$ and let $\sigma\in\Lambda_n$. Note that if $\sigma'\in\Lambda_{\sigma,n},$ then 
    $$
    r_{\sigma}r_{\sigma'} \le t^{n+1}<r_{\sigma}r_{\sigma'^{-}} \ \mbox{and} \  \ r_{\sigma}\le t^n< r_{\sigma^{-}}.
    $$
    From here, we can deduce that 
    $$
    r_{\min}t \le r_{\sigma'}\le r_{\min}^{-1} t
    $$
    where $r_{\min} = \min\{r_0, r_1,...,r_{m-1}\}$. Hence, for all $\sigma$ and for all $n\in{\mathbb N}, $
    $$
    \Lambda_{\sigma, n} \subset \{\sigma': r_{\min}t \le r_{\sigma'}\le r_{\min}^{-1} t\}
    $$
    The later has only finitely many elements, so the lemma follows. 
\end{proof}
    
\begin{lemma}\label{lemma_Lq}
Suppose that the IFS satisfies the convex open set condition and let $\mu$ be the associated self-similar measure with $V(x) = \mu[0,x]$. Then for all $q>0$, 
$$
\tau_{V}(q) = \liminf_{n\to\infty} \frac{\log\left(\sum_{\sigma\in\Lambda_n} (\mu (S_{\sigma}[0,1]))^q\right)}{n\log t}
$$
\end{lemma}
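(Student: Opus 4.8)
The plan is to show that the $\liminf$ over continuous radii $r\to 0$ appearing in the definition of $\tau_V(q)$ equals the $\liminf$ over the discrete scales $t^n$ using the cylinder sets $K_\sigma = S_\sigma[0,1]$, $\sigma \in \Lambda_n$. Write $\Phi_n := \sum_{\sigma\in\Lambda_n}\bigl(\mu(K_\sigma)\bigr)^q$ and recall that, by construction, for $\sigma\in\Lambda_n$ one has $r_\sigma \le t^n < r_{\sigma^-}$, so $\mathrm{diam}(K_\sigma) = r_\sigma \asymp t^n$ (more precisely $r_{\min}t^n \le r_\sigma \le t^n$). Since the IFS satisfies the convex open set condition, the intervals $\{S_\sigma(0,1): \sigma\in\Lambda_n\}$ are pairwise disjoint, and each $K_\sigma$ has its center inside $\mathrm{supp}(\mu)=K$; hence $\{K_\sigma\}_{\sigma\in\Lambda_n}$ is an admissible family of disjoint closed balls of comparable radius $\asymp t^n$ in the supremum defining $\tau_V$. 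This immediately gives one inequality: $\Phi_n \le \sup_{\mathcal B}\sum_{B\in\mathcal B}(\mu(B))^q$ evaluated at scale $r\asymp t^n$, hence $\tau_V(q)\le \liminf_{n} \frac{\log\Phi_n}{n\log t}$ (note $\log t<0$, so care with the direction of the inequality is needed here, but it works out).

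For the reverse inequality I would fix an arbitrary small $r>0$, choose $n$ with $t^{n+1}\le r < t^n$, and take an arbitrary family $\mathcal B$ of disjoint closed balls of radius $\le r$ with centers in $K$. Each such ball $B$ meets at most a bounded number $C'$ of the cylinders $K_\sigma$, $\sigma\in\Lambda_n$ — this is a standard bounded-overlap fact that follows from the convex open set condition together with the comparability $\mathrm{diam}(K_\sigma)\asymp t^n \gtrsim r$: a ball of radius $\le r$ can be covered by boundedly many cylinders of diameter $\asymp t^n$, using that distinct $K_\sigma$ at level $n$ have disjoint interiors inside $[0,1]$. Conversely each $K_\sigma$ meets at most boundedly many balls $B\in\mathcal B$ among those of maximal size, but more usefully one packs: $\mu(B)\le \sum_{\sigma: K_\sigma\cap B\neq\emptyset}\mu(K_\sigma)$, and using $q>0$ together with the elementary inequality $(\sum_{j=1}^{C'} a_j)^q \le C'^{\max(q,1)}\sum_j a_j^q$ and the bounded overlap in the other direction, one obtains $\sum_{B\in\mathcal B}(\mu(B))^q \lesssim \Phi_n$ with a constant independent of $r$ and $\mathcal B$. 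Taking the supremum over $\mathcal B$ and then $\liminf$ as $r\to0$ (equivalently $n\to\infty$, since $\log r \asymp \log t^n = n\log t$) yields $\tau_V(q)\ge \liminf_n \frac{\log\Phi_n}{n\log t}$.

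The main obstacle is the bounded-overlap bookkeeping: one must carefully control how many level-$n$ cylinders a given small ball can intersect, and conversely bound $\mu(B)$ by a controlled sum of $\mu(K_\sigma)$'s, all uniformly in the scale. The convex open set condition is exactly what makes this work in $\R^1$ — the sets $S_\sigma(0,1)$, $\sigma\in\Lambda_n$, are disjoint subintervals of $(0,1)$, and Lemma \ref{lemma_finite} already guarantees the contraction ratios $r_\sigma$ for $\sigma\in\Lambda_n$ are all comparable to $t^n$, so one can bound the number of such intervals of length $\gtrsim t^n$ that fit inside (or near) an interval of length $\lesssim r \le t^n$ by an absolute constant. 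A secondary technical point to watch is the direction of inequalities when dividing by $\log t < 0$ and $\log r<0$; I would state everything in terms of the quantities $\log\Phi_n / (n\log t)$ directly to avoid sign errors, and use that $\Phi_n$ is, up to multiplicative constants uniform in $n$, the value of the inner supremum at scale $t^n$.
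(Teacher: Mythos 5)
Your argument is correct and is essentially the paper's own proof: one direction by observing that $\{S_\sigma(0,1)\}_{\sigma\in\Lambda_n}$ is an admissible disjoint family at scale $\asymp t^n$, the other by the bounded-overlap count (a ball of radius $r\in[t^{n+1},t^n)$ meets at most a constant number of level-$n$ cylinders, and each cylinder meets boundedly many of the disjoint balls) combined with the elementary inequality $(\sum_{j\le D}a_j)^q\lesssim_D \sum_j a_j^q$ for $q>0$, exactly as in the paper. The only slip --- the midpoint of $S_\sigma[0,1]$ need not lie in $K=\mathrm{supp}(\mu)$, so strictly one should replace each cylinder by a comparable ball centered at a point of $S_\sigma(K)$ and pass to a boundedly overlapping (or disjoint) subfamily, losing only constants --- is a harmless technicality that the paper's proof glosses over in the same way.
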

\begin{proof}
    Denote for the moment the right hand side of the claimed equality by $T_V(q)$. As $\{(S_{\sigma}(0,1))\}_{\sigma\in\Lambda_n}$ is one of the collections of disjoint balls, $\tau_V(q)\le T_V(q)$. Conversely, the open set condition implies that there exists $D>0$ such that  if $B$ is an open ball of radius $r$ and $r\in[t^{n+1},t^{n})$, 
    $$
    \#\{\sigma\in\Lambda_n: B\cap S_{\sigma}[0,1]\ne\emptyset\}\le D.
    $$
    This means that for any collection of disjoint balls ${\mathcal B}$ of radius $r$, 
    $$
    \begin{aligned}
    \sum_{B\in{\mathcal B}} (\mu (B))^q\le  &   \sum_{B\in{\mathcal B}} \left(\sum_{\sigma: S_{\sigma}[0,1]\cap B\ne \emptyset} \mu (S_{\sigma}[0,1])\right)^q \\
    \le & \sum_{B\in{\mathcal B}} D^q \sum_{\sigma: S_{\sigma}(0,1)\cap B\ne \emptyset} (\mu (S_{\sigma}(0,1)))^q \\
    \le & 2D^q\sum_{\sigma\in\Lambda_n}(\mu (S_{\sigma}[0,1]))^q
    \end{aligned}
    $$
    where we used  the inequality $(x_1+...+x_D)^q\le D^q(x_1^q+...+x_D^q)$ for all $q>0, x_i\ge 0$ in the second line and  the fact that each $B$ intersects at most two $S_{\sigma}[0,1]$ for all $\sigma\in\Lambda_n$ with positive measure in the third line. This inequality immediately implies that $T_V(q)\le \tau_V(q)$. 
\end{proof}

\section{Proof of Theorem \ref{theorem_multifractal}.} We are now ready to prove Theorem \ref{theorem_multifractal} stated in the introduction. As we noticed in the previous section, we will use $\tau_V(q)$ and $\tau_{\mu}(q)$ interchangably as they are equal on $q>0$.

\medskip

We  first begin by noting that $ \nu\times \nu$ is the self-affine measure supported on $K\times K$ satisfying  
\begin{equation}\label{eqnu}
\nu \times \nu = \sum_{i,j=0}^{m-1} q_iq_j (\nu\times \nu) \circ T_{i,j}^{-1},
\end{equation}
where $T_{i,j}(x,y) = (S_i(x),S_j(y))$. 

\medskip

Let $f: K\times K$ be a non-negative function and $f(t,u) = f(u,t)$. Observe that 
$$
\begin{aligned}
K\times K  =& \left( \bigcup_{\sigma\ne\sigma'\in\Lambda_1} (K_{\sigma}\times K_{\sigma'})\cup (K_\sigma\times K_{\sigma'})\right) \cup \bigcup_{\sigma\in\Lambda_1}(K_\sigma\times K_\sigma)\\
= & \bigcup_{k=0}^{1} \bigcup_{\sigma\in \Lambda_k}\bigcup_{\sigma'\ne\sigma''\in\Lambda_{\sigma,k}}(K_{\sigma  \sigma'}\times K_{\sigma  \sigma''} \cup K_{\sigma  \sigma'}\times K_{\sigma \sigma''}) \cup \bigcup_{\sigma\in\Lambda_2}(K_\sigma\times K_\sigma)\\
 = & \ \vdots\\
 = & \bigcup_{k=0}^{\infty} \bigcup_{\sigma\in \Lambda_k}\bigcup_{\sigma'\ne\sigma''\in\Lambda_{\sigma,k}}(K_{\sigma  \sigma'}\times K_{\sigma  \sigma''} \cup K_{\sigma  \sigma'}\times K_{\sigma \sigma''}) 
\end{aligned}
$$
Heuristically, we decompose $K\times K$ into smaller pieces by its distance around $t^k$ from the diagonal line $y = x$.

\medskip

As $f$ has the same values on $K_{\sigma1}\times K_{\sigma2}$ and $K_{\sigma2}\times K_{\sigma1}$, we have the following equation 
$$
\int\int_{K\times K} f(t,u)d\nu(t)d\nu(u) = 2\sum_{k=0}^{\infty} \sum_{\sigma\in \Lambda_k}\sum_{\sigma'\ne\sigma''\in\Lambda_{\sigma,k} } \int\int_{K_{\sigma i}\times K_{\sigma j}} f(t,u)d\nu(t)d\nu(u).
$$
Applying the self-similar identity (\ref{eqnu}) and with open set condition, 
$$
\int\int_{K_{\sigma \sigma'}\times K_{\sigma \sigma''}} f(t,u)d\nu(t)d\nu(u) = q_{\sigma}^2 \int\int_{K_{\sigma'}\times K_{\sigma''}} f(S_{\sigma}(t),S_{\sigma}(u)) d\nu(t)d\nu(u).
$$
 Therefore, we have 
\begin{equation}\label{eqE}
\int\int_{{K\times K}} f(t,u)d\nu(t)d\nu(u) =2\sum_{k=0}^{\infty} \sum_{\sigma\in \Lambda_k}\sum_{\sigma'\ne\sigma''\in\Lambda_{\sigma,k} } q_{\sigma}^2  \int\int_{K_{\sigma'}\times K_{\sigma''
}} f(S_{\sigma}(t),S_{\sigma}(u)) d\nu(t)d\nu(u).
\end{equation}

\medskip

Let $X(t) = B^H_{V(t)}$. We consider the graph measure 
$$
\nu_{\mathcal G} (E) = \nu \{t\in K: (t,X(t))\in E\}.
$$
The $s$-energy of $\nu_{\mathcal G}$, $1<s<2$, is given by 
$$
I_s(\nu_{\mathcal G}) = \int\int_{K\times K} \frac{1}{\left((t-u)^2+ (X(t)-X(u))^2\right)^{s/2}}d\nu(t)d\nu(u).
$$
Using Fubini's theorem, we have 
$$
{\mathbb E} [I_s(\nu_{\mathcal G})] = \int\int_{K\times K} f(t,u)d\nu(t)d\nu(u)
$$
where 
$$
\begin{aligned}
f(t,u) = &{\mathbb E}\left[ \frac{1}{\left((t-u)^2+(B^H(V(t))-B^H(V(u)))\right)^{s/2}}\right]\\
 = & \frac{1}{\sqrt{2\pi}}\int_0^{\infty} \frac{1}{\left((t-u)^2+(V(t)-V(u))^{2H}x^2\right)^{s/2}} e^{-x^2/2}dx.
\end{aligned}
$$

\medskip

The convex open set condition implies that $S_i[0,1]$ disjoint intervals or intersect at most one point.  Combining with the definition of $V(t) = \mu [0,t]$, it follows that 
$$
V(S_i(x))  = p_0+...+p_{i-1} + p_i V(x).
$$
for all $i = 1,..., m-1$  and $V(S_0(x)) = p_0 V(x)$. Moreover, 
$$
V(S_{\sigma}(x)) =p_{\sigma}V(x)+ t_{\sigma}, \ t_{\sigma} = V(S_{\sigma}(0))
$$
(the exact value of $t_{\sigma}$ is  not an important number to us). 
This then gives that 
\begin{equation}\label{eq_f}
f(S_{\sigma}(t),S_{\sigma}(u)) =  \frac{1}{\sqrt{2\pi}}\int_0^{\infty} \frac{1}{\left[\left(r_{\sigma}(t-u)\right)^2+\left(p_{\sigma}^{2H}(V(t)-V(u))^{2H}\right)x^2\right]^{s/2}} e^{-x^2/2}dx.
\end{equation}
 With a similar approach as in Lemma \ref{boundonexpec}, we let 
$$
c = \frac{ r_{\sigma} (t-u)}{p_{\sigma}^H (V(t)-V(u))^{H}}. 
$$
 We decompose the integrand in (\ref{eq_f}) into $\int_0^c$ and $\int_c^{\infty}$, and then estimate  
$$
\begin{aligned}
f(S_{\sigma}(t),S_{\sigma}(u)) \lesssim & \int_0^c \frac{1}{r_{\sigma}^{s}(t-u)^s}dx + \int_c^{\infty} \frac{1}{p_{\sigma}^{Hs}(V(t)-V(u))^{Hs}x^s}dx\\
\lesssim & \ \frac{c}{r_{\sigma}^{s}(t-u)^s} + \frac{1}{p_{\sigma}^{Hs}(V(t)-V(u))^{Hs}} c^{1-s}\\
 \lesssim & \  \frac{1}{r_{\sigma}^{s-1}(t-u)^{s-1}p_{\sigma}^{H} (V(t)-V(u))^H}.  \ (\mbox{by plugging in the value of c}) . 
\end{aligned}
$$
\medskip
We now plug in this estimate into (\ref{eqE}) and this yields,
$$
\begin{aligned}
\int\int_{K\times K} f(t,u)d\nu(t)d\nu(u)  \lesssim& \sum_{k=1}^{\infty}\sum_{\sigma\in \Lambda_k} \frac{q_{\sigma}^2}{r_{\sigma}^{s-1}p_{\sigma}^{H}} \\
 &  \ \ \cdot  \left(\sum_{\sigma'\ne\sigma''\in\Lambda_{\sigma,k} } \int\int_{K_{\sigma'}\times K_{\sigma''}}  \frac{1}{(t-u)^{s-1} (V(t)-V(u))^{H}} d\nu(t)d\nu(u)\right)\\
\end{aligned}
$$

\medskip

We will prove that  the inside sum is finite over all $\sigma$ and $k$ in the following lemma.

\begin{lemma}\label{lemma_finite_integral}Suppose that $\nu$ is the measure associated with the probability vector in (\ref{eq_q}) with $1>q\ge H$ and $s< 1-\tau_V(q)$. Then
$$
\sup_{n\ge 1}\sup_{\sigma\in\Lambda_n} \left(\sum_{\sigma'\ne\sigma''\in\Lambda_{\sigma,k}}\int\int_{K_{\sigma'}\times K_{\sigma''}}  \frac{1}{(t-u)^{s-1} (V(t)-V(u))^{H}} d\nu(t)d\nu(u)\right)<\infty.
$$
\end{lemma}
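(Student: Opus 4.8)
The plan is to bound the double integral over $K_{\sigma'}\times K_{\sigma''}$ uniformly by splitting into the two natural factors: the ``geometric'' factor $(t-u)^{-(s-1)}$ and the ``measure'' factor $(V(t)-V(u))^{-H}$. Since $\sigma'\ne\sigma''$ range over $\Lambda_{\sigma,k}$, which by Lemma \ref{lemma_finite} has cardinality bounded by an absolute constant $C$, it suffices to bound each of the $O(C^2)$ individual integrals $\int\int_{K_{\sigma'}\times K_{\sigma''}}(t-u)^{-(s-1)}(V(t)-V(u))^{-H}\,d\nu(t)d\nu(u)$ by a constant independent of $\sigma',\sigma'',k,\sigma$. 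First I would observe that for $\sigma'\ne\sigma''$ in the same $\Lambda_{\sigma,k}$, the convex open set condition forces $K_{\sigma'}$ and $K_{\sigma''}$ to be separated (or to touch at a single point), so $|t-u|$ is comparable to $\mathrm{diam}(K_{\sigma'})+\mathrm{diam}(K_{\sigma''})+\mathrm{dist}(K_{\sigma'},K_{\sigma''})$; but more usefully, I want a bound that survives the case where the two cylinders are adjacent. The key is to absorb the singularity into the measure: since $\nu=\mu(\Phi,\mathbf q)$ with $q_i=p_i^q r_i^{-\tau_V(q)}$, the measure $\nu$ satisfies $\nu(B(x,r))\lesssim r^{s_0}$ for $s_0 = $ the lower local dimension exponent governed by $q$, and the energy $\iint |t-u|^{-\beta}\,d\nu\,d\nu<\infty$ whenever $\beta<\dim$-type exponent of $\nu$.

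The main technical move is to reduce both factors to a single power of $|t-u|$ or of $\nu$-mass. I would bound $(V(t)-V(u))^{-H}\le \mu(I_{t,u})^{-H}$ where $I_{t,u}$ is the interval between $t$ and $u$, and then use the fact that on $K_{\sigma'}\times K_{\sigma''}$ with $\sigma',\sigma''\in\Lambda_{\sigma,k}$ of comparable contraction $\approx t^{k}/r_\sigma$ we can compare $\mu$-mass of small intervals to powers of their length via the self-similar structure. The cleanest route: change variables using the self-similar identity once more to push $\sigma'$ and $\sigma''$ down to top-level cylinders (so their contractions become $\Theta(1)$), reducing the whole family of integrals to the single integral $\iint_{K_i\times K_j}(t-u)^{-(s-1)}(V(t)-V(u))^{-H}\,d\nu(t)d\nu(u)$ over the finitely many pairs $i\ne j\in\{0,\dots,m-1\}$, at the cost of a scalar prefactor of the form $q_{\sigma}^{-2}\cdot(\text{powers of }r,p)$; one then checks that the prefactor is exactly cancelled by the $q_\sigma^2/(r_\sigma^{s-1}p_\sigma^H)$ weight appearing in the outer sum of the main proof, which is presumably where the hypothesis $s<1-\tau_V(q)$, $q\ge H$ is consumed. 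For $i\ne j$ the sets $K_i,K_j$ are at positive distance (or touch at one point), so $(t-u)^{-(s-1)}$ is bounded except near that single contact point, and near it one invokes $\nu(B(x,r))\lesssim r^{-\tau_\nu(1)+\epsilon}$-type regularity (equivalently the finiteness of an appropriate $\nu$-energy, which holds since $s-1<1-\tau_V(q)-1$ is small) to see the integral converges.

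I expect the main obstacle to be the near-diagonal contact point when $K_{\sigma'}$ and $K_{\sigma''}$ abut: there $|t-u|\to 0$ and $V(t)-V(u)\to 0$ simultaneously, so both factors blow up and one must genuinely use the H\"older-type two-sided control of $\mu$ on small intervals (the analogue of $|x-y|^{s_1}\lesssim|V(x)-V(y)|\lesssim|x-y|^{s_2}$, which near a junction point of the IFS is governed by the $L^q$ spectrum) to show the product $(t-u)^{-(s-1)}(V(t)-V(u))^{-H}$ is $\nu\times\nu$-integrable precisely when $s<1-\tau_V(q)$ and $q\ge H$. The bookkeeping of which power of $r_\sigma$, $p_\sigma$, $q_\sigma$ goes where, and confirming it matches the outer weight so that the final geometric-type series in $k$ converges, is the part that requires care but is otherwise routine; I would organize it by first proving the scalar reduction identity, then the single-pair finiteness, then assembling.
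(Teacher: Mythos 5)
Your reduction step is fine and matches the paper: by Lemma \ref{lemma_finite} the words $\sigma',\sigma''\in\Lambda_{\sigma,k}$ have uniformly bounded length, so the inner sum involves only a fixed finite family of pairs of cylinders $K_{\sigma'}\times K_{\sigma''}$, and the uniform bound follows once each single integral is finite. (For the same reason, your ``rescaling to top-level cylinders with a prefactor cancelled by the outer weight $q_\sigma^2/(r_\sigma^{s-1}p_\sigma^H)$'' is both unnecessary and a misreading of what is needed: the lemma asks for a bound on the inner integrals that is uniform in $\sigma$ and $n$, not for a weighted sum to converge, and there is essentially nothing to rescale.)

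The genuine gap is in how you treat the junction where two cylinders abut. You propose to control the singular factor $(V(t)-V(u))^{-H}$ by a two-sided H\"older estimate of the type $|x-y|^{s_1}\lesssim |V(x)-V(y)|$, or by Frostman/energy regularity of $\nu$. No such lower bound exists under the convex open set condition: for the middle-third Cantor staircase, two adjacent cylinders are separated by a gap carrying zero $\mu$-mass, so for $t$ near the right end of one and $u$ near the left end of the other one has $|t-u|$ bounded below while $V(u)-V(t)\to 0$; the ratio $|t-u|^{s_1}/|V(u)-V(t)|$ blows up for every $s_1$. Moreover the Frostman-type bound on $\nu$ only addresses the factor $(t-u)^{-(s-1)}$, which is not the dangerous one, and even in the touching case a crude H\"older substitution would give a convergence condition involving the worst local exponent of $\mu$, not the sharp threshold $s<1-\tau_V(q)$. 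The paper's proof resolves exactly this point by a peeling decomposition of the left cylinder toward the junction, $K_{\sigma}=\bigcup_{k\ge n}J_k$, where $J_k$ is separated from $K_{\sigma'}$ by the rightmost sub-cylinder $K_{\sigma\eta_n^{\ast}\cdots\eta_k^{\ast}}$; this intermediate cylinder gives the simultaneous lower bounds $|t-u|\ge r_{\sigma}r_{\eta_n^{\ast}}\cdots r_{\eta_k^{\ast}}$ and $|V(t)-V(u)|\ge p_{\sigma}p_{\eta_n^{\ast}}\cdots p_{\eta_k^{\ast}}$, while $\nu(J_k)$ is computed exactly from the weights $q_i=p_i^q r_i^{-\tau_V(q)}$. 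Summing over $k$, the hypotheses enter precisely as in the paper: $q\ge H$ absorbs the factors $p^{q-H}\le 1$, and $s<1-\tau_V(q)$ makes the resulting geometric series $\sum_k t^{k(1-s-\tau_V(q))}$ converge. Without this (or an equivalent) mechanism that bounds both factors through the mass and length of an intermediate cylinder, your argument does not close.
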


\medskip

Assuming for the moment that  this lemma is proven, we now investigate the whole summation.  Invoking the definition of $q_i$ in (\ref{eq_q}) and assuming $q\ge H$ so that the above lemma is valid, we have that 
\begin{equation}\label{eq_sigma}
\sum_{\sigma\in \Lambda_k} \frac{q_{\sigma}^2}{r_{\sigma}^{s-1}p_{\sigma}^{H}} = \sum_{\sigma\in \Lambda_k} \frac{p_{\sigma}^{2q} r_{\sigma}^{-2\tau_V(q)}}{r_{\sigma}^{s-1}p_{\sigma}^{H}} = \frac{1}{r_{\sigma}^{s-1+2\tau_V(q)}} \sum_{\sigma\in\Lambda_k} p_{\sigma}^{2q-H}.    
\end{equation}
Using the definition of $\tau_V$ with Lemma \ref{lemma_Lq} and $q>H/2$,
we have for all $\epsilon>0$ and for all $k\ge k_{\epsilon}$,
$$
\sum_{I\in\Lambda_k}p_{\sigma}^{2q-H} \le t^{k(\tau_V(2q-H)-\epsilon)}. 
$$
Putting back to (\ref{eq_sigma}) and $r_{\sigma}\ge r_{\min}t^k$ for all $\sigma\in \Lambda_k$, we obtain that 
$$
{\mathbb E}[I_s(\nu_{\mathcal G})]\lesssim \sum_{k\ge k_{\epsilon}}\sum_{\sigma\in \Sigma^k} \frac{q_{\sigma}^2}{r_{\sigma}^{s-1}p_{\sigma}^{H}} \lesssim \sum_{k\ge k_{\epsilon}}\frac{1}{t^{k[s-1+2\tau_V(q)-\tau_V(2q-H)+\epsilon]}}.
$$
Since $t<1$, this sum is finite if 
$$
s-1+2\tau(q)-\tau_V(2q-H)+\epsilon<0, 
$$
i.e. $s< 1-2\tau_V(q)+\tau_V(2q-H)-\epsilon$. If we take $q = H$, we have for all $s<1-\tau_V(H)-\epsilon$ and  
$$
{\mathbb E}[I_s(\nu_{\mathcal G})] <\infty.
$$
 This shows that $\mbox{dim}_H({\mathcal G}(X,K))\ge 1-\tau_V(H)$ by letting $\epsilon\to 0$ and $s$ approaches $1-\tau_V(H)$. 
 Since ${\mathcal G}(X,[0,1]) = {\mathcal G}(X,K)\cup {\mathcal G}(X,[0,1]\setminus K)$ and $X$ is almost surely constant on $[0,1]\setminus K$.  The later has Hausdorff dimension 1. By countable stability, dim$_H \ {\mathcal G}(X,[0,1])=\mbox{dim}_H{\mathcal G}(X,K)\ge   1-\tau_V(H)$.  This completes the proof. \qquad$\Box$

 \medskip
 
 It remains to prove Lemma \ref{lemma_finite_integral}.
 \medskip
 
 \noindent{\it Proof of Lemma \ref{lemma_finite_integral}.} We first note that from Lemma \ref{lemma_finite}, it suffices to show that  for all $\sigma\ne \sigma'\in \{\sigma: r_{\min }t \le r_{\sigma}\le r_{\min}^{-1}t\}$, the integral
 $$
 \int\int_{K_{\sigma}\times K_{\sigma'}}  \frac{1}{(t-u)^{s-1}(V(t)-V(u))^{H}} d\nu(t)d\nu(u)<\infty.
 $$
 Without loss of generality, we can assume $S_{\sigma}(1)\le S_{\sigma'}(0)$ so that the interval $S_{\sigma}[0,1]$ (or equivalently $K_{\sigma}$) is in the left hand side of $S_{\sigma'}[0,1]$ (or $K_{\sigma'}$). Let $n$ be the first integer such that $t^n<r_{\min }t$. We now decompose $S_{\sigma}[0,1]$ inductively in the following ways:
 $$
 \Xi_{n} = \left\{\eta_n: \sigma\eta_n\in \Lambda_{n+1} \right\}
 $$
As $K_{\sigma\eta_n}$ intersects at most one point, we let $K_{\sigma\eta_n^{\ast}}$ be the rightmost piece of  $K_{\sigma\eta_n}$ in $K_{\sigma}$. 
Then 
$$
K_{\sigma} = J_n\cup K_{\sigma\eta_n^{\ast}} \  \ \mbox{where}  \ \ J_n = \bigcup_{\eta_n\in \Xi_n\setminus\{\eta_n^{\ast}\}} K_{\sigma\eta_n}.
$$
Inductively, we define
 $$
 \Xi_{k} = \left\{\eta_k : \sigma\eta_n^{\ast}\eta_{n+1}^{\ast}....\eta_{k-1}^{\ast}\eta_{k}\in \Lambda_{k+1} \right\}, \ k\ge n
 $$
 Also, $ K_{\sigma\eta_n...\eta_{k}^{\ast}}$ be the rightmost piece in $K_{\sigma\eta_n^{\ast}....\eta_{k-1}^{\ast}}$ so that 
 $$
K_{\sigma\eta_n^{\ast}...\eta_{k-1}^{\ast}} = J_k\cup  K_{\sigma\eta_n...\eta_{k}^{\ast}} \ \mbox{and} \  J_k =\bigcup_{\eta_k\in \Xi_k\setminus\{\eta_k^{\ast}\}} K_{\sigma\eta_n^{\ast}...\eta_{k-1}^{\ast}\eta_k}.
 $$
 Therefore, we have 
 $$
 K_{\sigma} = \bigcup_{k\ge n} J_k.
 $$
We now notice that it $t\in J_k$ and $u\in K_{\sigma'}$, the set $K_{\sigma\eta_n^{\ast}....\eta_{k}^{\ast}}$ is always in between $J_k$ and $K_{\sigma}'$. Hence, it holds that for all $t\in J_k$ and $u\in K_j$, 
 $$
|t-u| \ge r_{\sigma}r_{\eta_n^{\ast}}...r_{\eta_k^{\ast}} \ \mbox{and} \   |V(t)-V(u)|\ge p_{\sigma}p_{\eta_n^{\ast}}...p_{\eta_k^{\ast}}.
 $$
 Notice that 
 $$
( \nu\times\nu)(J_k\times K_{\sigma}')\le \nu (J_k) = p_{\sigma}^qp_{\eta_1^{\ast}}^q...p_{\eta_{k-1}^{\ast}}^qr_{\sigma}^{-\tau_V(q)}r^{-\tau_V(q)}_{\eta_n^{\ast}}...r_{\eta_{k-1}^{\ast}}^{-\tau_V(q)} \left( \sum_{\eta_k\in\Xi_k\setminus\{\eta_{k}^{\ast}\}}p_{\eta_k}^qr_{\eta_k}^{-\tau_V(q)}\right).
 $$
 Hence, combining all these estimate and decomposing $K_{\sigma}\times K_{\sigma'}$ into unions of $J_k\times K_{\sigma}'$, we have 
 $$
 \begin{aligned}
  &\int\int_{K_{\sigma}\times K_{\sigma'}}  \frac{1}{(t-u)^{s-1}(V(t)-V(u))^{H}} d\nu(t)d\nu(u)\\ \le  &\sum_{k\ge n}   (r_{\sigma}r_{\eta_n^{\ast}}...r_{\eta_k^{\ast}})^{-(s-1)}(p_{\sigma}p_{\eta_n^{\ast}}...p_{\eta_k^{\ast}})^{-H}  (\nu\times \nu) (J_k\times K_{\sigma'})\\
   \le &  \sum_{k\ge n}   
   (r_{\sigma}r_{\eta_n^{\ast}}...r_{\eta_k^{\ast}})^{-(s-1)}(p_{\sigma}p_{\eta_n^{\ast}}...p_{\eta_k^{\ast}})^{-H} p_{\sigma}^qp_{\eta_1^{\ast}}^q...p_{\eta_{k-1}^{\ast}}^qr_{\sigma}^{-\tau_V(q)}r^{-\tau_V(q)}_{\eta_n^{\ast}}...r_{\eta_{k-1}^{\ast}}^{-\tau_V(q)} \left( \sum_{\eta_k\in\Xi_k\setminus\{\eta_{k}^{\ast}\}}p_{\eta_k}^qr_{\eta_k}^{-\tau_V(q)}\right)  \\
   \le  & \sum_{k\ge n}   
   (r_{\sigma}r_{\eta_n^{\ast}}...r_{\eta_{k-1}^{\ast}})^{-(s-1)-\tau_V(q)} \cdot p_{\eta_k^{\ast}}^{-H} \left( \sum_{\eta_k\in\Xi_k\setminus\{\eta_{k}^{\ast}\}}p_{\eta_k}^qr_{\eta_k}^{-\tau_V(q)}\right)
 \end{aligned}
 $$
   where in the last line we used $q\ge H$ and  $p_i<1$.  From the definition of $\Xi_k$ and Lemma \ref{lemma_finite}, there is a uniform upper bound $C$ on $\#\Xi_k$ for all $k$. Moreover, the length of $\eta\in\bigcup_{k}\Xi_k$ is uniformly bounded by certain integer $L$.  Hence, 
  $$
  p_{\eta_k^{\ast}}^{-H} \left( \sum_{\eta_k\in\Xi_k\setminus\{\eta_{k}^{\ast}\}}p_{\eta_k}^qr_{\eta_k}^{-\tau_V(q)}\right)\le \frac{r_{\max}^{-\tau_V(q)}}{p_{\min}^{LH}} \cdot\#\Xi_k\le \frac{Cr_{\max}^{-\tau_V(q)}}{p_{\min}^{LH}}. 
  $$
   Hence, 
   $$
   \begin{aligned}
  \int\int_{K_{\sigma}\times K_{\sigma'}}  \frac{1}{(t-u)^{s-1}(V(t)-V(u))^{H}} d\nu(t)d\nu(u) \lesssim &\sum_{k\ge n}   
   (r_{\sigma}r_{\eta_n^{\ast}}...r_{\eta_{k-1}^{\ast}})^{-(s-1)-\tau_V(q)} \\
   \lesssim & \sum_{k\ge n}   t^{k (1-s-\tau_V(q))}.    
   \end{aligned}
     $$
This geometric series is finite as long as $s<1-\tau_V(q)$. The proof is complete. \qquad$\Box$

\section{Fourier dimension for Brownian Staircase}
In this section, we will prove Theorem \ref{thm_BStair}. The Hausdorff dimension has been computed in the previous section, we just need to study its Fourier dimension. 
 We will show that the Fourier dimension of the Brownian staricase is zero. Our idea is to make use of countable stability. Indeed, there is no countable stability of Fourier dimension in general \cite{E}, however, to overcome this,  Ekstr\"{o}m, Persson and Schmeling \cite{EPS2015} defined the modified Fourier dimension 
$$
{\rm dim}_{FM} A = \sup \{\mbox{dim}_F(\mu): \mu(A)>0, \  \mu \ \mbox{is a  finite Borel  measure}\}
$$
where $\mbox{dim}_F(\mu)= \sup \{s\ge 0: |\widehat{\mu}(\xi)| \lesssim |\xi|^{-s/2}\}$. Notice that ${\rm dim}_{F} A\le {\rm dim}_{FM} A$. They proved the following theorem giving a sufficient condition for countable stability of Fourier dimension.

\begin{theorem}\cite[Corollary 3]{EPS2015}\label{thm_EPS}
Let $A_k$, $k=1,2,3...$ be a countable family of Borel sets on $\R^d$ such that 
\begin{equation}\label{eq_conditionFM}
\sup_{n} \mbox{\rm dim}_{FM} \left( A_n \cap \overline{\bigcup_{k\ne n} A_k}\right) \leq \sup_k \mbox{\rm dim}_F A_k.
\end{equation}
Then 
$$
dim_F \left(\bigcup_{k=1}^{\infty} A_k\right) = \sup_{k}\mbox{\rm dim}_F A_k.
$$
\end{theorem}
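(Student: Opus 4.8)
I would prove the equality by establishing the two inequalities separately. The bound $\dim_F\big(\bigcup_k A_k\big)\ge\sup_k\dim_F A_k$ is immediate from monotonicity of Fourier dimension, since any finite measure carried by some $A_j$ is carried by $\bigcup_k A_k$. The content is therefore the reverse inequality, and the plan is to fix an arbitrary nonzero finite Borel measure $\mu$ carried by $\bigcup_k A_k$ with $|\widehat\mu(\xi)|\lesssim|\xi|^{-s/2}$ and show $s\le\sup_k\dim_F A_k$; taking the supremum over such $\mu$ then gives the claim. The decomposition I would work with is
\[
\bigcup_k A_k=\Big(\bigcup_n B_n\Big)\cup C,\qquad B_n:=A_n\setminus\overline{\bigcup_{k\ne n}A_k},\qquad C:=\bigcup_n\Big(A_n\cap\overline{\bigcup_{k\ne n}A_k}\Big),
\]
and the two elementary topological facts that make it work are: (i) the sets $B_n$ are pairwise disjoint, because $A_m\subseteq\overline{\bigcup_{k\ne n}A_k}$ whenever $m\ne n$; and (ii) with the open set $W_n:=\big(\overline{\bigcup_{k\ne n}A_k}\big)^{c}$ one has $\big(\bigcup_k A_k\big)\cap W_n=B_n$, so each $B_n$ is relatively open in $\bigcup_k A_k$.

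The argument then splits according to whether $\mu(C)>0$. If $\mu(C)>0$, then $\mu$ is itself a finite measure giving $C$ positive mass, so $\dim_{FM}(C)\ge\dim_F\mu\ge s$. Since the modified Fourier dimension is trivially countably stable (if $\nu\big(\bigcup_n E_n\big)>0$ then $\nu(E_n)>0$ for some $n$), we obtain $\dim_{FM}(C)=\sup_n\dim_{FM}\big(A_n\cap\overline{\bigcup_{k\ne n}A_k}\big)$, which by hypothesis (\ref{eq_conditionFM}) is at most $\sup_k\dim_F A_k$; hence $s\le\sup_k\dim_F A_k$ in this case.

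If instead $\mu(C)=0$, then $\mu$ is carried by $\bigcup_n B_n$, and since the $B_n$ are disjoint and $\mu\ne0$ there is an index $n_0$ with $\mu(B_{n_0})>0$, hence $\mu(W_{n_0})>0$. I would pick an open ball $U$ with $\overline U\subseteq W_{n_0}$ and $\mu(U)>0$ (possible because $W_{n_0}$ is a countable union of such balls), a cutoff $\phi\in C_c^\infty(W_{n_0})$ with $0\le\phi\le1$ and $\phi\equiv1$ on $U$, and pass to $\phi\mu$, which is a nonzero finite Borel measure carried by $\big(\bigcup_k A_k\big)\cap W_{n_0}=B_{n_0}\subseteq A_{n_0}$. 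The one genuinely analytic ingredient, which I would record as a separate lemma, is that multiplication by a smooth compactly supported function preserves Fourier decay: from $\widehat{\phi\mu}=\widehat\phi\ast\widehat\mu$, splitting the convolution over $|\eta|\le|\xi|/2$ (where $|\xi-\eta|\ge|\xi|/2$, so the bound on $\widehat\mu$ applies and $\widehat\phi\in L^1$) and over $|\eta|>|\xi|/2$ (where the Schwartz decay of $\widehat\phi$ beats any polynomial while $|\widehat\mu|\le\mu(\R^d)$) gives $|\widehat{\phi\mu}(\xi)|\lesssim|\xi|^{-s/2}$. Hence $\dim_F A_{n_0}\ge\dim_F(\phi\mu)\ge s$, and again $s\le\sup_k\dim_F A_k$.

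The main obstacle is conceptual rather than computational: the \emph{bad} overlap set $C$ cannot be treated with the ordinary Fourier dimension, since it may carry none of $\mu$'s decay-witnessing structure intrinsically even though $\mu$ restricted to it still has positive mass --- which is exactly why the modified dimension $\dim_{FM}$ enters the hypothesis --- whereas the \emph{good} pieces $B_n$, being relatively open in the union, can be isolated by a smooth bump without harming Fourier decay. One should also fix the meaning of $\mu$ being supported on a set $A$ that is not closed; both the literal reading $\mathrm{supp}\,\mu\subseteq A$ and the reading that $\mu$ is concentrated on $A$ make the argument above go through verbatim, using $\mathrm{supp}(\phi\mu)\subseteq\mathrm{supp}\,\phi\cap\mathrm{supp}\,\mu$.
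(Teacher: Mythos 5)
The paper itself offers no proof of this statement---it is quoted directly from \cite[Corollary 3]{EPS2015}---so there is no internal argument to measure yours against; judged on its own terms, your proof is correct. The lower bound is indeed just monotonicity; your decomposition $\bigcup_k A_k=\big(\bigcup_n B_n\big)\cup C$ with $B_n=A_n\cap W_n$ and $W_n=\big(\overline{\bigcup_{k\ne n}A_k}\big)^c$ is sound (the identity $\big(\bigcup_k A_k\big)\cap W_n=B_n$ is exactly what lets you localize); the case $\mu(C)>0$ follows correctly from the definition of $\dim_{FM}$, its trivial countable stability, and hypothesis (\ref{eq_conditionFM}); and the case $\mu(C)=0$ correctly reduces to the standard fact that multiplying by a $C_c^\infty$ cutoff supported in $W_{n_0}$ preserves the decay $|\widehat{\mu}(\xi)|\lesssim|\xi|^{-s/2}$, via $\widehat{\phi\mu}=\widehat{\phi}\ast\widehat{\mu}$ and the usual split of the convolution over $|\eta|\le|\xi|/2$ and $|\eta|>|\xi|/2$. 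This smooth-localization-plus-modified-dimension mechanism is the same one underlying the original argument of \cite{EPS2015}, so nothing essential is missing; your closing remark that both readings of ``supported on'' (closed support contained in the set, versus full measure on the set) go through is also accurate, since $\mathrm{supp}(\phi\mu)\subseteq\mathrm{supp}\,\phi\cap\mathrm{supp}\,\mu\subseteq W_{n_0}\cap\bigcup_k A_k=B_{n_0}\subseteq A_{n_0}$.
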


 Let $K$ be the middle-third Cantor set. It is well-known that ${\rm dim}_F K= 0$ and all measures supported on $K$ has no Fourier decay \cite[Theorem 8.1]{Ma}. It is also known that ${\rm dim}_{FM} K = 0$. As mentioned in \cite{EPS2015,JS2016},  it can be derived from Davenport, Erd\"{o}s and LeVeque's \cite{DEL1963} equidistribution result that if $\widehat{\mu}$ decays polynomially, then $\mu$ almost everywhere $x$ is normal to any bases. (i.e. the fractional part $\{b^kx\}$ is uniformly distributed on [0,1] for all $b = 2,3,...$). However, since all numbers in the middle-third Cantor set $K$ are not normal to the base 3 (as the digit 1 is always omitted),  this implies that none of the measure $\mu$ which has a positive measure on $K$ has polynomial decay. In particular, ${\rm dim}_{FM} K = 0$.  This allows us to show the following lemma.  

\begin{lemma}\label{lemmaFdimgraph}
Let $K$ be the middle-third Cantor set and let  $X: K\to \R$ be any continuous functions supported on $K$. Then ${\mathcal G}(X,K)$ does not support any measure whose Fourier transform decays. Moreover,  
$$
{\rm dim}_F \ {\mathcal G}(X,K) = 0 \ \mbox{and} \   {\rm dim}_{FM} \ {\mathcal G}(X,K) = 0.
$$  
\end{lemma}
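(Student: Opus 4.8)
The plan is to reduce both assertions to the one‑dimensional facts about $K$ recalled just above, by pushing forward under the projection onto the first coordinate. Write $\pi_1:\R^2\to\R$, $\pi_1(x_1,x_2)=x_1$. Since $X$ is continuous and $K$ is compact, ${\mathcal G}(X,K)$ is a compact subset of $\R^2$, so $\mu({\mathcal G}(X,K))$ and pushforwards of measures along $\pi_1$ are well defined, and $\pi_1({\mathcal G}(X,K))=K$ while $\pi_1^{-1}(K)=K\times\R\supseteq{\mathcal G}(X,K)$. The single computation that drives everything is that for any finite Borel measure $\mu$ on $\R^2$ with pushforward $\theta:=\pi_1\mu$, one has $\widehat{\theta}(\xi_1)=\widehat{\mu}(\xi_1,0)$ for all $\xi_1\in\R$; hence any decay of $\widehat{\mu}$ on $\R^2$ (whether ``$\to 0$'' or polynomial) forces the same decay of $\widehat{\theta}$ along the $\xi_1$-axis.

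First I would dispose of the Rajchman claim and ${\rm dim}_F=0$. Let $\mu\neq 0$ be a finite Borel measure supported on ${\mathcal G}(X,K)$. Then $\theta=\pi_1\mu$ is a nonzero finite Borel measure supported on $\pi_1({\mathcal G}(X,K))=K$, so by \cite[Theorem 8.1]{Ma} its Fourier transform does not tend to $0$. Since $\widehat{\theta}(\xi_1)=\widehat{\mu}(\xi_1,0)$, the function $\widehat{\mu}$ does not tend to $0$ either, so ${\mathcal G}(X,K)$ supports no Rajchman measure; in particular no measure supported on it can obey any polynomial decay bound, which gives ${\rm dim}_F\,{\mathcal G}(X,K)=0$.

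Next, for ${\rm dim}_{FM}\,{\mathcal G}(X,K)=0$, suppose toward a contradiction that there is a finite Borel measure $\mu$ on $\R^2$ (or on $\R$, after identifying) with $\mu({\mathcal G}(X,K))>0$ and $\dim_F(\mu)>0$, i.e.\ $|\widehat{\mu}(\xi)|\lesssim|\xi|^{-\varepsilon}$ for some $\varepsilon>0$ and all large $|\xi|$. Put $\theta=\pi_1\mu$. Then $|\widehat{\theta}(\xi_1)|=|\widehat{\mu}(\xi_1,0)|\lesssim|\xi_1|^{-\varepsilon}$, so $\widehat{\theta}$ decays polynomially, while $\theta(K)=\mu(\pi_1^{-1}(K))=\mu(K\times\R)\ge\mu({\mathcal G}(X,K))>0$. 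Now I would invoke exactly the Davenport--Erd\H{o}s--LeVeque equidistribution argument recalled above (cf.\ \cite{DEL1963,EPS2015,JS2016}): polynomial Fourier decay of $\theta$ implies $\theta$-a.e.\ $x$ is normal to every base $b\ge 2$. But no point of $K$ is normal to base $3$, since the digit $1$ never occurs in its ternary expansion; hence $\theta(K)=0$, contradicting $\theta(K)>0$. Therefore no such $\mu$ exists and ${\rm dim}_{FM}\,{\mathcal G}(X,K)=0$, which (using ${\rm dim}_F\le{\rm dim}_{FM}$) also re‑proves ${\rm dim}_F\,{\mathcal G}(X,K)=0$.

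The only place requiring care — and the step I expect to be the main, if minor, obstacle — is the measure‑theoretic bookkeeping around the projection: verifying that ${\mathcal G}(X,K)$ is compact (hence Borel) so that $\mu({\mathcal G}(X,K))$ is meaningful, that $\theta=\pi_1\mu$ is a well‑defined finite Borel measure, and that $\theta(K)=\mu(K\times\R)\ge\mu({\mathcal G}(X,K))$. Everything with genuine content — that measures on $K$ are never Rajchman and the normality/equidistribution criterion — is quoted directly from the references already cited in this section.
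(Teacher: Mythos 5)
Your proposal is correct and follows essentially the same route as the paper: restrict the Fourier transform to the horizontal axis via the pushforward under the first-coordinate projection, invoke the fact that no measure on the middle-third Cantor set has decaying Fourier transform for the Rajchman/$\dim_F$ claim, and use the Davenport--Erd\H{o}s--LeVeque normality argument (no point of $K$ is normal to base $3$) for $\dim_{FM}=0$. The only cosmetic difference is that the paper first records that every measure on the graph is a pushforward from $K$ via $t\mapsto(t,X(t))$, which is equivalent to your direct computation $\widehat{\theta}(\xi_1)=\widehat{\mu}(\xi_1,0)$.
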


\begin{proof}
Let $\mu$ be any Borel measures supported on ${\mathcal G}(X,K)$. Consider the projection map $P: {\mathcal G}(X,K)\to K$ defined by $P (t,y) = t$ (and $y= X(t)$). Note that $P$ is a bijective function. Define now the measure $\nu$ on $K$ defined by 
$$
\nu (E) = \mu (P^{-1}(E)).
$$
We claim that $\mu (F) = \nu\{t\in K: (t,X(t))\in F\}$, which means that  all measures on the graph must be a push-forward of some measure from the domain. To see this, for any Borel measurable function $f$ supported on the graph,  
$$
\int f(t,X(t))d\nu(t) = \int f(P(t,y),X(P(t,y))) d\mu(t,y) = \int f(t, y) d\mu(t,y).
$$
since $X(P(t,y)) = X(t) = y$. This justifies the claim.

\medskip

Now, taking Fourier transform of $\mu$ we have
$$
\widehat{\mu}(\xi_1,\xi_2) = \int_K e^{-2\pi i (\xi_1 t+\xi_2. X(t))}d\nu(t).
$$
Consider $\xi_2 = 0$. We have that $\widehat{\mu}(\xi_1,0) = \widehat{\nu}(\xi_1)$. As we know $\nu$ is a Borel measure on the middle third Cantor set, the measure does not decay. Hence, $\mu$ does not decay on the $x$-axis.

\medskip

To show that ${\rm dim}_{FM} \ {\mathcal G}(X,K) = 0$. We let $\mu$ be a Borel measure such that $\mu ({\mathcal G}(X,K))>0$.  Let $Q:\R^2\to\R$ be the orthogonal projection map onto the $x$-axis. Consider $\nu(E) = \mu (Q^{-1}(E))$. Then 
$$
\nu (K) = \mu (Q^{-1}(K)) \ge \mu ({\mathcal G}(X,K))>0
$$
since $Q^{-1}(K)$ contains ${\mathcal G}(X,K)$. Therefore, dim$_F\nu = 0$. Notice that $\widehat{\mu}(\xi,0) = \widehat{\nu}(\xi)$. Hence, $\mbox{dim}_F\mu = 0$ also. This implies that  ${\rm dim}_{FM} \ {\mathcal G}(X,K) = 0.$
\end{proof}

\medskip

\noindent{\it Proof of Theorem \ref{thm_BStair}.} We have already shown the Hausdorff dimension estimate in the previous subsection. In order to show that the Brownian staircase function $X_t = B_{V(t)}$ has Fourier dimension zero,  we decompose 
$$
[0,1]\setminus K = \bigcup_{n=1}^{\infty} I_n.
$$
where $I_n$ are open disjoint intervals. Then 
$$
{\mathcal G} (X, [0,1]) = {\mathcal G} (X, K) \cup \bigcup_{n=1}^{\infty} {\mathcal G} (X, I_n).
$$ 
We have shown that ${\rm dim}_F{\mathcal G} (X, K) = 0.$ As $X_t$ is a constant function on each $I_n$, so ${\rm dim}_F{\mathcal G} (X, I_n) = 0$ also. Our theorem will follow from Theorem \ref{thm_EPS} if we can verify (\ref{eq_conditionFM}). Let $A = {\mathcal G} (X, K)$ and let $A_n = {\mathcal G} (X, I_n)$. We  notice that 
$$
A\cap\overline{ \bigcup_{n=1}^{\infty} A_n} = {\mathcal G} (X, K) \cap \overline{\bigcup_{n=1}^{\infty}{\mathcal G} (X, I_n)} = {\mathcal G} (X, K)\cap {\mathcal G} (X, [0,1]) = {\mathcal G} (X, K)
$$
By Lemma \ref{lemmaFdimgraph},  it has modified Fourier dimension zero. And
$$
A_k\cap\overline{ \bigcup_{n\ne k}^{\infty} A_n \cup A} =  {\mathcal G} (X, I_k) \cap  {\mathcal G} (X, [0,1]\setminus I_k) = \emptyset
$$
 which has modified Fourier dimension zero also.  This completes the proof. \qquad$\Box$

 \medskip

 \section{Remark and open questions}
 
Let us conclude the paper with some remarks and open questions. 

\medskip

(1). We mainly deal with centered continuous additive processes. Indeed, if $X_t$ is a (non-centered) continuous additive processes,  then $X_t -\E[X_t]$ is a centered continuous additive processes. Hence, any continuous additive processes can be written as 
 $$
 X_t = f(t)+B_{V(t)}
 $$
 where $f$ is some deterministic continuous function and $B_{V(t)}$ is the centered continuous additive process. For regular functions (e.g. bi-Lipschitz) $f$, the Hausdorff dimension will not change. However, for more related questions concerning the relationship of the graph of $B_t+f$ and $B_t$, readers may refer to \cite{PS2016}. 
 
 \bigskip
 
(2). It was mentioned in \cite{Xiao2003}, any L\'{e}vy processes $X_t$ induce many additive processes by a time-change $X_{V(t)}$ where $V$ is a right-continuous increasing function. We do not know if these will characterize all additive processes, but it is likely that the dimensions of these types of additive processes can be studied through the Blumental-Getoor indices and the H\"{o}lder regularity of $V$. 
 
\bigskip

\noindent(3). For continuous singular strictly increasing function $V$ (e.g. Example \ref{example6.3}), we do not have a sharp Hausdorff dimension estimate. However, the $L^q$ spectrum and the multifractal formalism of  Bernoulli convolution associated with golden ratio are fully computed in \cite{Feng2005}. With a careful study, we believe that the Hausdorff dimension of the graph of $B^H_{V(t)}$ will be $1-\tau_V(H)$ for the $V$ in Example \ref{example6.3}. More generally, the following conjecture may be true:
 
 \begin{conj}
 Let $H\in(0,1)$ and let $X^H_t = B^H_{V(t)}$ Suppose that the $L^q$-spectrum of the increasing function $V$ is differentiable at $H$ and it satisfies  the multifractal formalism at $\alpha = \tau_V'(H).$ 
 Then
 $$
 \mbox{dim}_H {\mathcal G}(X^H, [0,1]) = 1-\tau_V(H). 
 $$
 \end{conj}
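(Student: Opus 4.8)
The bound $\dim_H\mathcal{G}(X^H,[0,1])\le 1-\tau_V(H)$ is Lemma~\ref{lemma_tau_upper_bound} together with $\dim_H\le\overline{\dim}_B$, so everything reduces to the almost sure lower bound $\dim_H\mathcal{G}(X^H,[0,1])\ge 1-\tau_V(H)$. The plan is to run the energy method of the proof of Theorem~\ref{theorem_multifractal}, replacing the self-similarity of $V$ by the multifractal hypothesis. Let $\mu$ be the measure with $\mu[0,x]=V(x)$ and put $\alpha=\tau_V'(H)$. Since the infimum in the Legendre transform is attained at $q=H$, we have $\tau_V^{\ast}(\alpha)=H\alpha-\tau_V(H)$; by hypothesis the formalism holds at $\alpha$, so $\dim_H K_\alpha=\tau_V^{\ast}(\alpha)$. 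Take $\nu$ to be the ``$q=H$ tilted'' Gibbs measure attached to $\mu$ --- the direct analogue of the self-similar measure $\nu=\mu(\Phi,\mathbf{q})$ of Section~\ref{section-self-similar} --- which is carried by $K_\alpha$, is exact dimensional of dimension $\tau_V^{\ast}(\alpha)$, and under which $\nu$-a.e.\ $x$ satisfies $\log\mu(B(x,r))/\log r\to\alpha$. By Egorov's theorem fix a compact $F\subset K_\alpha$ with $\nu(F)>0$ on which both $\log\mu(B(x,r))/\log r\to\alpha$ and $\log\nu(B(x,r))/\log r\to\tau_V^{\ast}(\alpha)$ converge uniformly, so that $r^{\alpha+\varepsilon}\le\mu(B(x,r))$ and $\nu(B(x,r))\le r^{\tau_V^{\ast}(\alpha)-\varepsilon}$ for all $x\in F$ and $r$ small.

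\noindent\textbf{The energy computation.} Push $\nu|_F$ forward to the graph, $\mu_{\mathcal G}(E)=\nu\{t\in F:(t,X^H_t)\in E\}$. By Fubini and the fractional moment bound \eqref{eq_bound_expect_frac},
\[
\mathbb{E}\,I_s(\mu_{\mathcal G})\;\lesssim\;\iint_{F\times F}\frac{|t-u|^{1-s}}{|V(t)-V(u)|^{H}}\,d\nu(t)\,d\nu(u).
\]
If one grants the bound $|V(t)-V(u)|\gtrsim|t-u|^{\alpha+\varepsilon}$ for $t,u\in F$ at small separation, the right-hand side is $\lesssim I_{\,s-1+\alpha H+\varepsilon H}(\nu|_F)$, which by the uniform Frostman bound $\nu(B(x,r))\le r^{\tau_V^{\ast}(\alpha)-\varepsilon}$ on $F$ is finite as long as $s-1+\alpha H+\varepsilon H<\tau_V^{\ast}(\alpha)-\varepsilon$, i.e.\ $s<1-\tau_V(H)-O(\varepsilon)$. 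Since $\mathbb{E}\,I_s(\mu_{\mathcal G})<\infty$ forces $I_s(\mu_{\mathcal G})<\infty$ almost surely, letting $\varepsilon\to0$ along a sequence and $s\uparrow1-\tau_V(H)$ and using $\dim_H E=\sup\{s:\exists\,\mu\in\mathcal M(E),\ I_s(\mu)<\infty\}$ yields $\dim_H\mathcal{G}(X^H,K_\alpha)\ge1-\tau_V(H)$ a.s., hence the same for $\mathcal{G}(X^H,[0,1])$ by monotonicity. (When $V$ is not strictly increasing --- e.g.\ for the devil staircase, where pairs of gap endpoints satisfy $V(t)=V(u)$ --- the integrand has a genuine singularity along the diagonal, and, exactly as in the proof of Theorem~\ref{theorem_multifractal}, one must instead split $F\times F$ into the off-diagonal cells at each scale $|t-u|\sim t^{k}$ and sum the resulting geometric series.)

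\noindent\textbf{The main obstacle.} The delicate point is the lower bound $|V(t)-V(u)|\gtrsim|t-u|^{\alpha+\varepsilon}$, because $|V(t)-V(u)|=\mu\bigl((t\wedge u,\,t\vee u]\bigr)$ is the $\mu$-mass of a \emph{one-sided} interval and can be far smaller than $\mu(B(t,|t-u|))\gtrsim|t-u|^{\alpha+\varepsilon}$: a measure may satisfy the multifractal formalism while being arbitrarily asymmetric at $\nu$-typical scales. To close this gap one has to strengthen the abstract identity $\dim_H K_\alpha=\tau_V^{\ast}(\alpha)$ to the structural fact that $\nu$-a.e.\ $x$ has \emph{two-sided lower regularity}, $\mu\bigl((x-r,x]\bigr)\gtrsim r^{\alpha+\varepsilon}$ and $\mu\bigl([x,x+r)\bigr)\gtrsim r^{\alpha+\varepsilon}$ for all small $r$, and then organize the near-diagonal pairs so that this regularity absorbs the singularity (a scale-by-scale/martingale decomposition in the spirit of Jin~\cite{Jin2011}). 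For $V$ coming from self-conformal measures, and for the golden-ratio Bernoulli convolution of Example~\ref{example6.3}, two-sided lower regularity should follow from the known local structure of $\mu$ (for the latter from the multifractal analysis in \cite{Feng2005}), so one would expect the conjecture to be provable in that range; obtaining it from the bare formalism alone is, I believe, the real difficulty.
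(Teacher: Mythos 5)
You have not proved the statement, and you should be aware that the paper does not either: this is stated as a conjecture, and the paper's closing remarks (item (3) of the last section) sketch exactly the reduction you propose --- upper bound from Lemma \ref{lemma_tau_upper_bound}, lower bound via the energy method and (\ref{eq_bound_expect_frac}) applied to a measure carried by the level set $K_\alpha$ with $\alpha=\tau_V'(H)$, i.e.\ establishing (\ref{eqKalpha}) --- and then identify the same obstruction you call ``the main obstacle''. So your proposal is an honest conditional outline that matches the authors' own reasoning, but the step you ask the reader to ``grant'', namely $|V(t)-V(u)|\gtrsim|t-u|^{\alpha+\varepsilon}$ for all $t,u$ in a compact set $F$ of positive $\nu$-measure, is precisely the open point. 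The multifractal formalism at $\alpha$ only asserts $\dim_H K_\alpha=\tau_V^{\ast}(\alpha)$; it gives, at best, a pointwise statement about centered balls, $\mu(B(t,r))\ge r^{\alpha+\varepsilon}$ for $r$ small \emph{depending on} $t$, whereas the energy integral needs a locally uniform, two-sided estimate on one-sided intervals $\mu((t\wedge u,t\vee u])$. Egorov does upgrade pointwise to uniform on a large subset, but only if you already have a measure $\nu$ under which the local dimension of $\mu$ is $\nu$-a.e.\ equal to $\alpha$ --- and that is your second gap: for a general increasing $V$, the hypothesis ``the formalism holds at $\alpha$'' does not produce the ``$q=H$ tilted Gibbs measure'' you invoke. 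Such a measure exists for self-similar or Gibbs-type $V$ (which is exactly why Theorem \ref{theorem_multifractal} goes through, using self-similarity to replace the missing uniform estimate), but its existence is an extra structural assumption, not a consequence of the bare conjecture's hypotheses.

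Even granting the tilted measure, the one-sided issue you flag is genuine and is not repaired by Egorov: $\mu((t,u])$ can be far smaller than $\mu(B(t,|t-u|))$ when $t$ sits at the left edge of a region of very small $\mu$-mass, and nothing in the formalism at the single value $\alpha$ rules this out $\nu$-typically at all scales simultaneously. So the correct assessment of your write-up is: the reduction and the energy computation are sound and coincide with the paper's intended strategy, the geometric-series treatment of the near-diagonal (as in the proof of Theorem \ref{theorem_multifractal}) is the right device when $V$ has constancy intervals, but the conjecture remains unproved because the two ingredients you postulate --- existence of an exact-dimensional tilted measure on $K_\alpha$ and uniform two-sided lower regularity of $\mu$ along $\nu$-typical points --- are exactly what is missing beyond the self-similar case. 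If you restrict to self-conformal $V$ or to the Bernoulli convolution of Example \ref{example6.3}, where such structure is available from \cite{FengLau,Feng2005}, your outline should indeed close, and that would already be a worthwhile (weaker) theorem; but as a proof of the conjecture as stated it has a genuine gap.
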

 
 \medskip
 
 The conjecture can be resolved if we can show that 
\begin{equation}\label{eqKalpha}
 \mbox{dim}_H  \ {\mathcal G}(X^ H, K_{\alpha}) = 1+\mbox{dim}_H(K_\alpha) -H\alpha.
   \end{equation}
 where $K_{\alpha}$ is the level set of local dimensions  at $\alpha = \tau'(H)$. Indeed, the assumption  of the conjecture  implies that 
 $$
 \mbox{dim}_H(K_\alpha) = \tau^{\ast}(\alpha) = H \tau'(H)-\tau(H). 
 $$
 Hence,  
 $$
  \mbox{dim}_H  \ {\mathcal G}(X^H, [0,1])\ge \mbox{dim}_H  \ {\mathcal G}(X^ H, K_{\alpha}) = 1-\tau(H).
 $$
As upper bound has been  shown to be  true always, this resolved the conjecture. 

\medskip

To resolve (\ref{eqKalpha}), we only need to establish the lower bound. However, the definition of local dimension  only implies  that for $r$ sufficiently  small depending on $t\in K_{\alpha}$, we have
$$
|V(t+r)-V(t-r)|\ge r^{\alpha+\epsilon},
$$
but to  apply (\ref{eq_bound_expect_frac}), we  need a uniform estimate that for sufficiently small $r$ depending on $t\in K_{\alpha}$
$$
|V(s)-V(u)|\ge |s-u|^{\alpha+\epsilon}, \ \forall  \ s,u\in (t-r,t+r).
$$
There  is a subtle difference between a local estimate  and a locally uniform estimate, refraining us  from obtaining the strongest conclusion. The problem is avoided  by a more careful  estimate using self-similarity in Theorem \ref{theorem_multifractal}
 \bigskip
 
  
 (4). Concerning the  Fourier dimension, Example \ref{example6.3} showed that there is a strictly increasing $V$ with the graph $B_{V(t)}$ supports a power Fourier decay despite the fact that the measure $\mu = dV$ has no Fourier decay. The following question are interesting to get further investigation. 
 
 \medskip
 
 {\bf (Qu 1).} Suppose that $V$ is strictly increasing. is it true that the graph of $B_{V(t)}$ always supports a Rajchman measure?

 \medskip
 
 Also, the sharp bound of  the Fourier dimension for bi-Lipschitz function is also  worth to investigate. 
 
 {\bf (Qu 2).} It is only known that the graph of the standard Brownian motion has Fourier dimension 1. In this paper, 
we showed that if $V$ is bi-Lipschitz, then the Fourier dimension of graph is at least 2/3. We do not know if 1 can be attained here. In view of this, what would be the exact almost sure Fourier dimension of graph of $B_{V(t)}$ if $V$ is bi-Lipschtiz?

\medskip

\noindent {\bf Acknowledgement.} The authors would like to thank the referees for carefully reading the manuscript and making many constructive comments. In particular, they would like thank the referees suggesting the papers \cite{E2,Jin2011} and encouraging them to generalize the proof of Cantor devil stair function to the form in Theorem \ref{theorem_multifractal}.


\begin{thebibliography}{9999}


\bibitem{Adler1977}
R. J. Adler. {\it Hausdorff dimension and Gaussian fields}, Ann. of Probab., 5, (1977), 145-151.


\bibitem{BF2021}
J. Barral and D.-J  Feng,
{\it On multifractal formalism for self-similar measures with overlaps}),
Math. Z. 298 (2021), 359–383.

\bibitem{BP}
C. Bishop and Y. Peres, {\it Fractals in Probability and Analysis}, Cambridge University Press,  2017. 

\bibitem{BG1960}
R.M. Blumental and R.K. Getoor, {\it Sample Functions of Stochastic Processes with Stationary Independent Increments}, J.l of Math. and Mechanics , 10 (1961),  493-516.


\bibitem{BD2017}
J. Bourgain and S. Dyatlov, {\it 	Fourier dimension and spectral gaps  for hyperbolic surfaces}, Geom. Funct. Anal, 27 (2017), 744-771.


\bibitem{DEL1963}
H. Davenport, P. Erdos, and W. J. LeVeque,  {\it On Weyl's criterion for uniform distribution}, Michigan
Math. J. 10 (1963), 311-314.

\bibitem{DJ}
A. Durand  and S. Jaffard, 
{\it Multifractal analysis of L\'{e}vy fields},  Probab. Theory Related Fields,153 (2012), 45-96.


\bibitem{E}
F. Ekstr\"{o}m, {\it The Fourier dimension is not finitely stable}, Real Analysis Exchange, 40 (2015), 397-402.


\bibitem{E2}
F. Ekstr\"{o}m, {\it Fourier dimension of random images},  Ark. Mat. 54 (2016), 455-471.

\bibitem{EPS2015}
F. Ekstr\"{o}m, T. Persson, and J. Schmeling, {\it On the Fourier dimension and a modification}, J. Fractal Geom.
2:3 (2015), 309-337.


\bibitem{Falconer2013}
K. Falconer, {\it Fractal Geometry: Mathematical foundations and applications}, Wiley,
2013.

\bibitem{Feng2005}
D.-J Feng, 
{\it The limited Rademacher functions and Bernoulli convolutions associated with Pisot numbers}, Adv. Math. 195 (2005), 24–101.

\bibitem{FengLau}
D-.J Feng and K.-S Lau, {\it Multifractal formalism for self-similar measures with weak separation condition}, J. Math. Pures Appl., 92 (2009), 407-428.


\bibitem{FS2018}
J. Fraser and T. Sahlsten, {\it On the Fourier analytic structure of
the Brownian graph}, Analysis  and PDE 11 (2018),  115-132.

\bibitem{FOS2014}
J. Fraser, T. Orponen and T. Sahlsten., {\it On Fourier analytic properties of graphs}, Int. Math. Res. Not., (2014), 2730-2745.


\bibitem{LeGall}
J.-F Le Gall, {\it Brownian Motion, Martingales, and Stochastic Calculus}, Springer, Graduate Texts in Mathematics, 274.


\bibitem{Hambrook2017}
K. Hambrook, {\it Explicit Salem sets in $\R^2$},  Advances in Mathematics 311 (2017), 634-648. 

\bibitem{Hata}
M. Hata, {\it Differentiable Functions which do not Satisfy a Uniform Lipschitz Condition of any
Order}, Proc. Amer. Math. Soc., 111 (1991), 443-450.    


\bibitem{Jin2011}
X. Jin, {\it The graph and range singularity spectra of $b$-adic
independent cascade functions}, Adv. Math, 226 (2011) 4987-5017.



\bibitem{JS2016}
T. Jordan and T. Sahlsten. {\it Fourier transforms of Gibbs measures for the Gauss map}, Math. Ann., 364,
(2016), 983-1023.

\bibitem{ito2013stochastic}
 K. It\^{o}, , {\it Stochastic Processes: Lectures
given at Aarhus University}, edited by O.E. Barndorff-Nielsen and K. Sato, Springer Berlin Heidelberg, 2013.

\bibitem{K1985}
J.~P. Kahane, Some random series of functions,  Cambridge University Press, 2nd ed., 1985. 


\bibitem{KSbook}
I. Karatzas and S. Shreve, , {\it Brownian Motion and Stochastic Calculus}, Springer, New York, 1991.



\bibitem{LS2020}
J. Li and T. Sahlsten, {\it Fourier transform of self-affine measures}, Advances in Mathematics, 374, (2020), 107349.

\bibitem{Ma}
P. Mattila, {\it Fourier Analysis and Hausdorff Dimension}, Cambridge University Press,  2015. 



\bibitem{MP2012}
P. M\"{o}rters and Y. Peres, {\it Brownian motion}, Cambridge Series in Statistical and Probabilistic Mathematics, Cambridge University Press, 2010.

\bibitem{Mukeru2018}
S. Mukeru, {\it The Zero Set of Fractional Brownian Motion
Is a Salem Set}, J Fourier Anal Appl, 24 (2018), 957-999.


\bibitem{Orey1970}
S. Orey, {\it Gaussian sample functions and the Hausdorff dimension of level crossings}.
Z. Wahrscheinlichkeitstheorie und Verw. Gebiete 15 (1970), 249-256.

\bibitem{P}
G. Pe\v{s}kir, {\it On the exponential Orlicz norms of stopped Brownian motion}, Studia Math. 117:3 (1996), 253-273.

\bibitem{PS2016}
Y. Peres and P. Sousi, {\it Dimension of fractional Brownian motion with variable drift}, Probab. Theory Related Fields, 165 (2016), 771-794.

\bibitem{Riedi}
R. Riedi,
{\it Multifractal processes}.  Theory and applications of long-range dependence, 625–716, Birkhäuser Boston, Boston, MA, 2003.

\bibitem{SX2006}
N.-R. Shieh and Y. Xiao. {\it Images of Gaussian random fields: Salem sets and interior points}, Studia Math.,
176, (2006), 37-60.

\bibitem{S2019}
P. Shmerkin, 
{\it $L^q$ dimensions of self-similar measures and applications: a survey}, New trends in applied harmonic analysis. Vol. 2—harmonic analysis, geometric measure theory, and applications, 257–292, Appl. Numer. Harmon. Anal., Birkhäuser/Springer, Cham, 2019.


\bibitem{Taylor1953}
S. J. Taylor, {\it The Hausdorff -dimensional measure of Brownian paths in n-space},
Mathematical Proceedings of the Cambridge Philosophical Society 49 (1953), 31-39.

\bibitem{Varju}
P. Varj\'{u}, {\it Recent progress on Bernoulli convolutions}, European Congress of Mathematics, 847-867, Eur. Math. Soc., Zurich, 2018.


\bibitem{Xiao2013}
Y. Xiao, {\it Recent developments on fractal properteis of Gaussian Random fields}, in Further Developments in Fractals and Related Fields, 255-285, Birkhauser, , Basel. 

\bibitem{Xiao2003}
Y. Xiao, {\it  Random Fractals and Markov Processes}, 2003, Science Direct Working Paper No S1574-0358(04)70970-3, Available at SSRN: https://ssrn.com/abstract=3181440.

\bibitem{Yang2007}
M. Yang, {\it Hausdorff dimension of the image of additive processes}, Stochastic Processes and their Applications 118 (2008) 681-702.

\end{thebibliography}
 \end{document}